
\documentclass{daj}

\usepackage{epsf,epsfig,amsfonts,amsgen,amsmath,amstext,amsbsy,amsopn,amsthm,cases,listings,color
}
\usepackage{ebezier,eepic}
\usepackage{color}
\usepackage{multirow}
\usepackage{epstopdf}
\usepackage{graphicx}
\usepackage{pgf,tikz}
\usepackage{mathrsfs}
\usepackage[marginal]{footmisc}
\usepackage{enumitem}
\usepackage{booktabs}
\usepackage{url}
\usepackage{mathtools}
\usepackage{pgfplots}
\usepackage{amssymb}
\usepackage{subfigure}
\usepackage{mathrsfs}
\usetikzlibrary{arrows}
\definecolor{uuuuuu}{rgb}{0.27,0.27,0.27}
\definecolor{sqsqsq}{rgb}{0.1255,0.1255,0.1255}
\newtheorem{definition}{Definition} [section]
\newtheorem{theorem}[definition]{Theorem}
\newtheorem{lemma}[definition]{Lemma}
\newtheorem{proposition}[definition]{Proposition}

\newtheorem{corollary}[definition]{Corollary}
\newtheorem{conjecture}[definition]{Conjecture}
\newtheorem{claim}[definition]{Claim}

\newtheorem{observation}[definition]{Observation}

\pgfplotsset{compat=1.18}
\dajAUTHORdetails{%
  title = {Hypergraphs with Infinitely Many Extremal Constructions}, 
  author = {Jianfeng Hou, Heng Li, Xizhi Liu, Dhruv Mubayi, and Yixiao Zhang},
  plaintextauthor = {Jianfeng Hou, Heng Li, Xizhi Liu, Dhruv Mubayi, and Yixiao Zhang},
    %
    %
  plaintexttitle = {Hypergraphs with Infinitely Many Extremal Constructions}, 
    %
  runningtitle = {Infinitely Many Extremal Constructions},
    %
  runningauthor = {J. Hou, H. Li, X. Liu, D. Mubayi, and Y. Zhang},
    %
  copyrightauthor = {J. Hou, H. Li, X. Liu, D. Mubayi, and Y. Zhang},
   %
  keywords = {Hypergraph Tur{\' a}n problem, stability, nonminimal hypergraphs, Lagrangian, multilinear polynomials, shadow, feasible region.},
}   

\dajEDITORdetails{%
   year={2023},
   number={18},
   received={12 July 2022},   
   published={4 December 2023},  
   doi={10.19086/da.88508},       
}   

\begin{document}

\begin{frontmatter}[classification=text]

\title{Hypergraphs with Infinitely Many Extremal Constructions} 

\author[JH]{Jianfeng Hou\thanks{Supported by National Natural Science Foundation of China (Grant No. 12071077).}}
\author[HL]{Heng Li}
\author[XL]{Xizhi Liu\thanks{Research was supported by ERC Advanced Grant 101020255.}}
\author[DM]{Dhruv Mubayi\thanks{Research was supported by NSF awards DMS-1763317 and DMS-1952767.}}
\author[YZ]{Yixiao Zhang}

\begin{abstract}
We give the first exact and stability
results for a hypergraph Tur\'{a}n problem with infinitely many  extremal constructions
that are far from each other in edit-distance.
This includes an example of triple systems with Tur\'{a}n density $2/9$,
thus answering some questions posed by the third and fourth authors and Reiher about the feasible region of hypergraphs. Our results also provide extremal constructions whose shadow density is a transcendental number.

Our novel approach is to construct certain multilinear polynomials that attain
their maximum (in the standard simplex) on a line segment and then to use these polynomials to define an operation on hypergraphs that  gives extremal constructions.
\end{abstract}
\end{frontmatter}

\section{Introduction}\label{SEC:Introduction}
\subsection{Tur\'{a}n number and stability}\label{SUBSEC:turan-number-and-stability}
For $r\ge 2$ an $r$-uniform hypergraph (henceforth $r$-graph) $\mathcal{H}$ is a collection of $r$-subsets of some finite set $V$.
Given a family $\mathcal{F}$ of $r$-graphs we say $\mathcal{H}$ is $\mathcal{F}$-free
if it does not contain any member of $\mathcal{F}$ as a subgraph.
The {\em Tur\'{a}n number} $\mathrm{ex}(n,\mathcal{F})$ of $\mathcal{F}$ is the maximum
number of edges in an $\mathcal{F}$-free $r$-graph on $n$ vertices.
The {\em Tur\'{a}n density} $\pi(\mathcal{F} )$ of $\mathcal{F}$ is defined as
$\pi(\mathcal{F})=\lim_{n\to \infty}\mathrm{ex}(n,\mathcal{F})/{\binom{n}{r}}$.
A family $\mathcal{F}$ is called \emph{nondegenerate} if $\pi(\mathcal{F}) >0$.
The study of $\mathrm{ex}(n,\mathcal{F})$ is perhaps the central topic in extremal graph and hypergraph theory.

Much is known about $\mathrm{ex}(n,\mathcal{F})$ when $r=2$,
and one of the most famous results in this regard is Tur\'{a}n's theorem,
which states that for every integer $\ell \ge 2$ the Tur\'{a}n number $\mathrm{ex}(n,K_{\ell+1})$
is uniquely achieved by the balanced $\ell$-partite graph on $n$ vertices,
which is called the Tur\'{a}n graph $T(n,\ell)$.

For $r\ge 3$ determining $\pi(\mathcal{F})$ for a family $\mathcal{F}$ of $r$-graphs
is known to be notoriously hard in general.
Indeed, the problem of determining $\pi(K_{\ell}^{r})$ raised by Tur\'{a}n~\cite{TU41},
where $K_{\ell}^{r}$ is the complete $r$-graph on $\ell$ vertices, is still wide open for all $\ell>r\ge 3$.
Erd\H{o}s offered $\$ 500$ for the determination of any $\pi(K_{\ell}^{r})$
with $\ell > r \ge 3$ and $\$ 1000$ for the determination of all $\pi(K_{\ell}^{r})$ with $\ell > r \ge 3$.

\begin{conjecture}[Tur\'{a}n~\cite{TU41}]\label{CONJ:Turan-complete-r-graph}
For every integer $\ell\ge 3$ we have $\pi(K_{\ell+1}^{3}) = 1- {4}/{\ell^2}$.
\end{conjecture}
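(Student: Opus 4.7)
Since Tur\'{a}n's conjecture is a long-standing open problem carrying Erd\H{o}s's \$1000 prize, no short proof sketch can be expected; my plan nevertheless splits along the standard lower/upper bound axis. For the lower bound I would use an iterated balanced blow-up of a carefully chosen base $3$-graph $H_0$ on $\ell$ vertices: replace each vertex by an equal-size cloud and insert all triples whose projections form an edge at the previous level. For $\ell = 3$ the base is Kostochka's classical construction, and a direct recursion shows the density converges to $5/9 = 1 - 4/9$; in general one obtains $1 - 4/\ell^2$. Crucially, $H_0$ is \emph{not} unique --- already for $\ell = 3$ an entire family of non-isomorphic bases seeds distinct extremal constructions of the same density. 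This non-uniqueness phenomenon, precisely the subject of the present paper in a tractable setting, is also the main obstacle to the matching upper bound.

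The upper bound is the hard direction: one must show every $K_{\ell+1}^{3}$-free $3$-graph on $n$ vertices has at most $(1 - 4/\ell^2 + o(1))\binom{n}{3}$ edges. The first natural attempt is a link/induction argument --- pick a vertex $v$ of large degree, bound its link $L(v)$ by Tur\'{a}n's graph theorem (since $L(v)$ is $K_\ell$-free), and recurse on $\mathcal{H} - v$ --- but the resulting recursion on $\mathrm{ex}(n,K_{\ell+1}^{3})$ has fixed point strictly above the conjectured density, because it discards the global coupling between $L(v)$ and the rest of $\mathcal{H}$. A second natural attempt is Razborov's flag algebra method, which sets up a semidefinite program whose optimum upper-bounds $\pi(K_{\ell+1}^{3})$; for $\ell = 3$ this has produced numerical bounds slightly above $5/9$ but never matched it.

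The main obstacle is precisely the abundance of extremal configurations. Classical Tur\'{a}n-type proofs proceed via stability --- show that near-extremal configurations lie close in edit distance to a unique extremal object, then upgrade stability to an exact result --- but this pathway is blocked here because no unique extremal object exists, and any viable proof must treat an entire family of near-extremal $3$-graphs on equal footing. The multilinear polynomial method developed in the present paper offers a promising template: find a multilinear polynomial $P$ on the standard simplex whose maximum equals $1 - 4/\ell^2$ and whose maximizers capture the orbit of the iterated blow-up family under a suitable operation, then convert $P$-optimality into the edge bound via a Motzkin--Straus-type argument. Identifying such a $P$ for $K_{\ell+1}^{3}$ itself --- rather than for a cleverly designed surrogate family as this paper does --- is exactly the core difficulty, and I expect no variant of this plan to succeed without a fundamentally new idea.
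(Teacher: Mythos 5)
You correctly recognized that this statement is an open conjecture, not a theorem: the paper itself labels it as Conjecture~\ref{CONJ:Turan-complete-r-graph} (Tur\'{a}n's Tetrahedron conjecture and its generalization) and offers no proof, only citing the history of partial results (Kostochka's exponentially many conjectured extremal constructions, the Razborov flag-algebra upper bound $\pi(K_4^3)\le 0.561666$, etc.). Your discussion of the lower-bound constructions, the failure of the naive link-recursion, the current status of flag-algebra bounds, and the central obstruction posed by the multitude of extremal configurations is all accurate and is consistent with how the paper motivates its own, deliberately more tractable, surrogate families $\mathcal{F}_t$. There is nothing in the paper to compare your argument against; the honest conclusion you reach --- that no proof is currently available and that the non-uniqueness of extremal constructions is precisely what blocks the standard stability pathway --- is correct.

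One small precision worth noting: for the lower bound you do not need an ``iterated'' blow-up. For $\ell=3$ a single fixed construction (e.g.\ partition $[n]$ into three near-equal parts $V_0,V_1,V_2$ and take all triples meeting every part together with all triples having two vertices in $V_i$ and one in $V_{i+1\bmod 3}$) already achieves density $5/9$, and its balanced blow-ups stay $K_4^3$-free with the same asymptotic density; Kostochka's contribution was to exhibit exponentially many \emph{nonisomorphic} such constructions, not to improve the density. Similarly for general $\ell$ the analogous $\ell$-partite construction achieves $1-4/\ell^2$ directly. This does not affect your overall assessment.
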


The case $\ell = 3$ above, which states that $\pi(K_{4}^{3}) = 5/9$ has generated a lot of interest and activity over the years.
Many constructions (e.g. see \cite{BR83,KO82,FO88}) are known to achieve the value in Conjecture~\ref{CONJ:Turan-complete-r-graph}
for $\ell = 3$.
In particular, Kostochka~\cite{KO82} showed that there are at least $2^{n-2}$ nonisomorphic
extremal $K_{4}^{3}$-free constructions on $3n$ vertices (assuming Tur\'{a}n's Tetrahedron conjecture is true).
This is perhaps one of the reasons why it is so challenging.
On the other hand, successively better upper bounds for $\pi(K_{4}^{3})$ were obtained by
de Caen $\cite{DC88}$, Giraud (see $\cite{CL99}$), Chung and Lu  $\cite{CL99}$, and Razborov $\cite{RA10}$.
The current record is $\pi(K_{4}^{3}) \le 0.561666$, which was obtained by Razborov  $\cite{RA10}$ using the Flag Algebra machinery.

In this work, we continue the research on hypergraph Tur\'{a}n problems with the flavour
of trying to show that certain phenomena impossible for graphs are possible for $r$-graphs when $r\ge 3$.
There are several results that belong to this category.
For instance, the well known work of Frankl and R\"odl on jumps~\cite{FR84},
and Pikhurko's theorem that for $r\ge 3$ there exist uncountably many Tur\'an densities of infinite families of $r$-graphs~\cite{PI14}.
More specifically, we will show that there are infinitely many finite families $\mathcal{F}$ with similar properties
as $K_{4}^{3}$ (assuming Tur\'an's conjecture) in terms of the number of extremal constructions.

Many families $\mathcal{F}$ have the property that there is a unique $\mathcal{F}$-free hypergraph
$\mathcal{G}$ on $n$ vertices achieving $\mathrm{ex}(n,\mathcal{F})$,
and moreover, any $\mathcal{F}$-free hypergraph $\mathcal{H}$ of size close to $\mathrm{ex}(n,\mathcal{F})$
can be transformed to $\mathcal{G}$ by deleting and adding very few edges.
Such a property is called the stability of $\mathcal{F}$.
The first stability theorem, which says that $K_{\ell}$ is stable for all integers $\ell\ge 3$,
was proved independently by Erd\H{o}s and Simonovits $\cite{SI68}$.
Their result motivated the third author~\cite{MU07} to make the following definition.

\begin{definition}[$t$-stable]\label{DFN:t-stable}
Let $r \ge 2$ and $t \ge 1$ be integers.
A family $\mathcal{F}$ of $r$-graphs is $t$-stable if for every $m\in\mathbb{N}$
there exist $r$-graphs $\mathcal{G}_{1}(m),\ldots,\mathcal{G}_{t}(m)$ on $m$ vertices such that the following holds.
For every $\delta >0$ there exist $\epsilon > 0$ and $N_0$ such that for all $n \ge N_0$
if $\mathcal{H}$ is an $\mathcal{F}$-free $r$-graph on $n$ vertices with $|\mathcal{H}| > (1-\epsilon) \mathrm{ex}(n,\mathcal{F})$
then $\mathcal{H}$ can be transformed to some $\mathcal{G}_{i}(n)$ by adding and removing at most $\delta n^r$ edges.
Say $\mathcal{F}$ is stable if it is $1$-stable.
\end{definition}

Denote by $\xi(\mathcal{F})$ the minimum integer $t$ such that $\mathcal{F}$ is $t$-stable,
and set $\xi(\mathcal{F}) = \infty$ if there is no such $t$.
Call $\xi(\mathcal{F})$ \emph{the stability number} of $\mathcal{F}$.

The classical Erd\H{o}s--Stone--Simonovits theorem~\cite{ES46,ES66} and Erd\H{o}s--Simonovits stability theorem~\cite{SI68}
imply that every nondegenerate family of graphs is stable.
For hypergraphs there are many families (whose Tur\'{a}n densities are unknown)
which are conjecturally not stable.
Two famous examples are Tur\'{a}n's conjecture on tetrahedra (i.e. $K_{4}^3$) mentioned above (e.g. see~\cite{AS95,KO82,LM22}),
and the Erd\H{o}s--S\'{o}s conjecture on triple systems with bipartite links (e.g. see~\cite{FF84,LM22}).
In fact, families that are not stable and whose Tur\'{a}n densities can be determined were constructed only very recently.
In~\cite{LM22}, the third and fourth authors constructed the first finite $2$-stable family of $3$-graphs.
Later in~\cite{LMR1}, together with Reiher, they further constructed the first finite $t$-stable family of $3$-graphs
for every integer $t\ge 3$.
In~\cite{LMR3}, using a similar method that was used for the hypergraph jump problem (see e.g.~\cite{FPRT07})
their constructions were extended to every integer $r \ge 4$.

Using Kostochka's constructions the authors proved in~\cite{LM22} that $\xi(K_{4}^3) = \infty$ (assuming Tur\'{a}n's conjecture).
However, the methods used in~\cite{LM22} and~\cite{LMR1}
for constructing the $2$-stable family and $t$-stable families cannot be extended further to
construct a finite family whose stability number is infinite, and
this was left as an open question in both papers.

Our main result in this work is to give a novel method to construct (finite) families $\mathcal{F}$ with $\xi(\mathcal{F}) = \infty$.
Our method is based on a simple property of multilinear polynomials,
which will be stated in Section~\ref{PROP:multilinear-polynomial}.
We will use this method to give infinitely many (finite) families of $3$-graphs
whose number of extremal constructions grows with the number of vertices.
Further, using the machinery provided in~\cite{LMR2} we also prove an Andr\'{a}sfai--Erd\H{o}s--S\'{o}s type~\cite{AES74}
stability theorem for these families.

We identify an $r$-graph $\mathcal{H}$ with its edge set,
use $V(\mathcal{H})$ to denote its vertex set, and denote by $v(\mathcal{H})$ the size of $V(\mathcal{H})$.
An $r$-graph $\mathcal{H}$ is a \emph{blowup} of an $r$-graph $\mathcal{G}$ if there exists a
map $\psi\colon V(\mathcal{H}) \to V(\mathcal{G})$ so that $\psi(E) \in \mathcal{G}$ iff $E\in \mathcal{H}$.
We say $\mathcal{H}$ is \emph{$\mathcal{G}$-colorable}
if there exists a map $\phi\colon V(\mathcal{H}) \to V(\mathcal{G})$ so that $\phi(E)\in \mathcal{G}$ for all $E\in \mathcal{H}$,
and we call such a map $\phi$ a \emph{homomorphism} from $\mathcal{H}$ to $\mathcal{G}$.
In other words, $\mathcal{H}$ is $\mathcal{G}$-colorable if and only if $\mathcal{H}$ occurs as a subgraph in some blowup
of $\mathcal{G}$.

\begin{theorem}\label{THM:main-sec1.1}
For every integer $t\ge 3$ there exists a finite family $\mathcal{F}_t$ of $3$-graphs such that the following statements hold.
\begin{enumerate}[label=(\alph*)]
\item We have $\mathrm{ex}(n,\mathcal{F}_t) \le \frac{(t-2)(t-1)}{6t^2}n^3$ for all $n\in \mathbb{N}$,
        and equality holds whenever $t \mid n$.
\item\label{it:2} If $t \mid n$, then the number of nonisomorphic maximum $\mathcal{F}_t$-free $3$-graphs on $n$ vertices is
        at least ${n}/{2t}$.
\item We have $\xi(\mathcal{F}_t) = \infty$.
\item For every integer $t\ge 4$ there exist constants $\epsilon = \epsilon(t)>0$
        and $N_0 = N_0(t)$ such that the following holds for every integer $n\ge N_0$.
        Every $n$-vertex $\mathcal{F}_t$-free $3$-graph with minimum degree at least $\left(\frac{(t-2)(t-1)}{2t^2}-\epsilon\right)n^2$
        is $\Gamma_{t}$-colorable, where $\Gamma_{t}$ is some fixed $3$-graph on $t+2$ vertices.
\end{enumerate}
\end{theorem}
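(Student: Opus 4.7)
The plan is to construct, for each $t\ge 3$, a 3-graph $\Gamma_t$ on $t+2$ vertices whose Lagrangian polynomial $L_{\Gamma_t}(\mathbf{x})=\sum_{ijk\in\Gamma_t}x_ix_jx_k$ attains its maximum value $\lambda(\Gamma_t)=(t-2)(t-1)/(6t^2)$ not at an isolated point but along an entire one-dimensional line segment inside the standard simplex on $t+2$ letters. Following the abstract's hint about multilinear polynomials, the natural way to engineer this is to include in $\Gamma_t$ two vertices $a,b$ with identical links, so that the Lagrangian is symmetric in $x_a,x_b$ and linear in each when the other coordinates are fixed; arranging the remaining edges so that $x_a+x_b$ is forced to a specific positive value at any maximizer but the split between $x_a$ and $x_b$ is free then produces the desired segment. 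I would then define $\mathcal{F}_t$ as a suitable finite collection of minimal 3-graphs that do not admit a homomorphism into $\Gamma_t$, and prove as a key intermediate step that every $\mathcal{F}_t$-free 3-graph is $\Gamma_t$-colorable.

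Granted this homomorphism result, part (a) follows from the standard Motzkin--Straus/averaging correspondence between $\Gamma_t$-colorable 3-graphs and the Lagrangian: any $\mathcal{F}_t$-free host on $n$ vertices has at most $6\lambda(\Gamma_t)\binom{n}{3}\le \frac{(t-2)(t-1)}{6t^2}n^3$ edges, with equality (when $t\mid n$) achieved by blowing up any integer point on the optimal segment whose $t$ ``core'' coordinates equal $1/t$. For part (b), parametrizing the optimal segment by the twin split $(n_a,n_b)$ with $n_a+n_b$ and the remaining parts held fixed gives $\Theta(n)$ distinct lattice points on the segment; after quotienting by the $a\leftrightarrow b$ symmetry one extracts the claimed $\ge n/(2t)$ pairwise non-isomorphic extremal blow-ups.

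Part (c) is then a counting corollary: any two extremal blow-ups whose twin splits $(n_a,n_b)$ and $(n_a',n_b')$ satisfy $|n_a-n_a'|\ge cn$ differ in $\Omega(n^3)$ edges because the link hypergraph of each twin vertex is ``inflated'' differently, so no fixed list $\mathcal{G}_1(n),\ldots,\mathcal{G}_T(n)$ can be edit-close to all of them once $n>2tT$, forcing $\xi(\mathcal{F}_t)=\infty$. This argument implicitly relies on a Simonovits-type stability assertion for $\mathcal{F}_t$ (every near-extremal $\mathcal{F}_t$-free 3-graph is edit-close to some extremal blow-up of $\Gamma_t$), which I would extract from the proof of (a) via a standard continuity/compactness argument on the Lagrangian, combined with the removal lemma to pass from approximate to genuine $\Gamma_t$-colorability.

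For part (d) I would import the Andr\'asfai--Erd\H{o}s--S\'os style machinery of \cite{LMR2}: the minimum-degree hypothesis eliminates ``defect'' vertices and propagates local $\Gamma_t$-colorability to the whole vertex set through a link-covering argument. The main obstacle throughout will be step one: simultaneously constructing $\Gamma_t$ so that (i) its Lagrangian has the prescribed value $(t-2)(t-1)/(6t^2)$ and is attained along a full segment, (ii) this segment is the unique set of maximizers (ensuring the only extremal blow-ups are the intended ones), and (iii) a \emph{finite} family $\mathcal{F}_t$ of local obstructions suffices to force $\Gamma_t$-colorability. Verifying (i)--(ii) amounts to a careful analysis of the link structure of $\Gamma_t$ and the KKT conditions for the Lagrangian; certifying (iii) requires bounding the size of any minimal non-$\Gamma_t$-colorable 3-graph, which is where the ``multilinear polynomial property'' highlighted in the introduction is essential.
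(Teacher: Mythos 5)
Your high-level plan is aligned with the paper's: build a $3$-graph $\Gamma_t$ on $t+2$ vertices whose Lagrangian attains its maximum $\lambda=\frac{(t-2)(t-1)}{6t^2}$ along a segment, take $\mathcal{F}_t$ to be a finite family of small non-$\Gamma_t$-colorable $3$-graphs, prove that $\mathcal{F}_t$-free is equivalent (for extremal purposes) to $\Gamma_t$-colorable, count blow-ups on the optimal segment for (b), argue that distinct blow-ups are far in edit distance for (c), and use the vertex-extendability machinery of \cite{LMR2} for (d). All of this matches the paper.

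There is, however, a genuine gap in the construction of $\Gamma_t$, and it is fatal to parts (b) and (c) as you have sketched them. You propose to ``include in $\Gamma_t$ two vertices $a,b$ with identical links'' and let the free split of $x_a+x_b$ produce the desired segment. If $L_{\Gamma_t}(a)=L_{\Gamma_t}(b)$ then $a$ and $b$ are clones, and any blow-up with parts $V_a,V_b$ is \emph{isomorphic} (not merely of equal size) to the blow-up of the smaller graph obtained by identifying $a$ and $b$, with the one merged class of size $|V_a|+|V_b|$. All the lattice points on your segment with a given $|V_a|+|V_b|$ therefore give the \emph{same} hypergraph; ``quotienting by the $a\leftrightarrow b$ symmetry'' is far too weak an identification — the symmetry group permits any repartition of $V_a\cup V_b$. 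You would obtain one extremal construction up to isomorphism, with stability number $1$, which is exactly the phenomenon the paper's remark about $K_{1,3,3}$ and Steiner triple systems is contrasting against. Your claim in (c) that blow-ups with different twin splits ``differ in $\Omega(n^3)$ edges'' is simply false when $a,b$ are clones: they differ in zero edges.

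What the paper does instead is precisely calibrated to avoid this collapse. Starting from $K_t^3$, it performs a \emph{crossed blowup} on a pair of vertices $\{v_1,v_2\}$ of codegree $t-2$: it deletes the edges through $\{v_1,v_2\}$, adds two new vertices $v_1',v_2'$ that are clones of $v_1,v_2$ away from the special pair, and then reattaches the deleted hyperedges via a mixed bipartite pattern — $t-3$ of the apexes see the bipartition $\{v_1,v_1'\}\times\{v_2,v_2'\}$ and the last apex sees the crossed bipartition $\{v_1,v_2\}\times\{v_1',v_2'\}$. The resulting $\Gamma_t$ has four special vertices whose \emph{pairwise codegrees differ} (in particular one pair has codegree $1$ while another has codegree $t-2$), so they are not mutually clones. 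The Lagrangian is still maximized on the line segment $x_{v_1}=x_{v_2'}=\alpha/t$, $x_{v_2}=x_{v_1'}=(1-\alpha)/t$ — this is the content of Proposition~\ref{PROP:multilinear-polynomial}, where the crucial move is to \emph{split} the nonnegative coefficient $p_3$ of $x_ix_j$ as $p_4+p_5$ and use each piece differently — but now distinct values of $\alpha$ give blow-ups with genuinely different shadow densities, hence nonisomorphic extremal $3$-graphs, and in fact edit-distance $\Omega(n^3)$ apart. That last fact is exactly what makes (c) go through. So your intuition that the segment-of-maximizers idea drives the whole theorem is right, and your mention of the multilinear polynomial property is pointing at the right tool, but ``two twins'' must be replaced by the crossed blowup; without that change the construction degenerates to a single extremal blow-up and $\xi(\mathcal{F}_t)=1$.
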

\noindent\textbf{Remarks.}
\begin{itemize}
\item Part~\ref{it:2} says that there are many nonisomorphic extremal $\mathcal{F}_t$-free $3$-graphs on $n$ vertices for $t \mid n$.
This phenomenon already showed up in Graph theory.
For example, it follows from an old result of Simonovits~\cite[Theorem~5]{SI68} that a maximum $K_{1,3,3}$-free graph on $n$ vertices
can be obtained from an $n$-vertex balanced bipartite graph by adding graphs of maximum degree $2$ into both parts.
It is clear that there are many nonisomorphic maximum graphs with maximum degree $2$ on a fixed number of vertices,
so there are many nonisomorphic maximum $K_{1,3,3}$-free graph on $n$ vertices.

For hypergraphs, Balogh and Clemen\footnote{Personal communication.} claim that there exists a single $3$-graph $F$
such that for special integers $n$
the maximum $F$-free $3$-graphs on $n$ vertices are $3$-graphs that can be obtained from the balanced $n$-vertex $3$-partite $3$-graph
by adding Steiner triple systems into all three parts.
Thus it follows from a result of Keevash~\cite{Ke18} that there are exponentially many such maximum $F$-free $3$-graphs on $n$ vertices.

These examples are different from our result in the sense that they all have stability number $1$, i.e. all extremal constructions have a small edit-distance to each other.

\item Motivated by the hypergraph jump problem\footnote{Our results in this paper are not very related to the Jump problem, so we omit the related definitions here.
We refer the interested reader to e.g. \cite{FR84,FPRT07,BT11} for related results.}, the authors asked in~\cite{LMR1}
whether there exists a finite family whose Tur\'{a}n density is $2/9$ that is not stable?
By letting $t = 3$ in Theorem~\ref{THM:main-sec1.1} we see that the finite family $\mathcal{F}_3$ of $3$-graphs
has Tur\'{a}n density $2/9$ but $\xi(\mathcal{F}_3) = \infty$, thus answering this question.
\end{itemize}

\subsection{Multilinear polynomials}\label{SEC:Multilinear-polynomials}
In this short section, we state a simple result about polynomials. Later we will see that our main theorem reduces to this result.

Denote by $\Delta_{m-1}$ the standard $(m-1)$-dimensional simplex,
i.e.
\begin{align}
\Delta_{m-1} = \left\{(x_1,\ldots,x_m)\in [0,1]^m\colon x_1+\cdots+x_m= 1\right\}. \notag
\end{align}
Given an $m$-variable continuous function $f$ we define
\begin{align}
\lambda(f) = \max\left\{f(x_1,\ldots,x_m)\colon (x_1,\ldots,x_m)\in \Delta_{m-1}\right\},  \notag
\end{align}
and
\begin{align}
Z(f) = \left\{(x_1,\ldots,x_m)\in \Delta_{m-1}\colon f(x_1,\ldots,x_m)-\lambda(f) = 0\right\}.  \notag
\end{align}
Since $\Delta_{m-1}$ is compact, the restriction of $f$ to $\Delta_{m-1}$ attains a maximum value. Thus $\lambda(f)$ and $Z(f)$ are well-defined.

Let $p(X_1,\ldots,X_m)$ be a polynomial, where $X_1,\ldots,X_m$ are indeterminates.
We say $p$ is \emph{multilinear} if for every $\alpha \in \mathbb{R}$ and for every $i\in [m]$
we have $p(X_1,\ldots,\alpha \cdot X_i ,\ldots,X_m) = \alpha \cdot p(X_1,\ldots, X_i ,\ldots,X_m)$. Note that this is equivalent to the assertion that each term of $p$ is of the form $\prod_{i \in S} x_i$ for some $S$.
We say $p$ is \emph{nonnegative} (or \emph{nonpositive}) if $p(x_1,\ldots,x_m)\ge 0$ (or $p(x_1,\ldots,x_m)\le 0$)
for all $(x_1,\ldots,x_m)\in \Delta_{m-1}$.
For a pair $\{i,j\}\subset [m]$ we say $p$ is symmetric with respect to $X_i$ and $X_j$ if
$$p(X_1,\ldots,X_i,\ldots, X_j, \ldots, X_m)=p(X_1,\ldots,X_j,\ldots, X_i, \ldots, X_m).$$

An easy observation is that a multilinear polynomial $p(X_1,\ldots,X_m)$ is symmetric with respect to $X_i$ and $X_j$
iff there exist polynomials $p_1, p_2, p_3$ without indeterminates $X_i$ and $X_j$ such that
$p = p_1 + p_2 (X_i + X_j) + p_3 X_iX_j$.

Given two vectors $\vec{x}, \vec{y}\in \mathbb{R}^m$ define the line segment $L(\vec{x}, \vec{y})$
with endpoints $\vec{x}$ and $\vec{y}$ as
\begin{align}
L(\vec{x}, \vec{y}) = \left\{\alpha\cdot \vec{x}+(1-\alpha)\cdot \vec{y}\colon \alpha \in [0,1]\right\}. \notag
\end{align}

We will later prove the following result about multilinear polynomials.

\begin{proposition}\label{PROP:multilinear-polynomial}
Let $p(X_1,\ldots,X_m) = p_1 + p_2 (X_i + X_j) + p_3 X_iX_j$
be an $m$-variable multilinear polynomial that is symmetric with respect to $X_i$ and $X_j$.
Suppose that $p_3$ is nonnegative,
and $p_4, p_5$ are nonnegative polynomials satisfying $p_4+p_5=p_3$.
Then the $(m+2)$-variable polynomial
\begin{align}
& \hat{p}(X_1,\ldots, X_i, X_i', \ldots, X_j, X_j', \ldots,X_m)  \notag\\
&  = p_1 + p_2(X_i+X'_i+X_j+X'_j)
            + p_{4}(X_i+X'_i)(X_j+X'_j)
            + p_{5}(X_i+X_j)(X'_i+X'_j) \notag
\end{align}
satisfies $\lambda(\hat{p}) = \lambda(p)$, and moreover,
for every $(x_1,\ldots, x_m)\in Z(p)$ we have $L(\vec{y}, \vec{z}) \subset Z(\hat{p})$,
where $\vec{y}, \vec{z}\in \Delta_{m+1}$ are defined by
\begin{align}
\vec{y}
&  = \left(x_1,\ldots,x_{i-1},(x_i+x_j)/2, 0,x_{i+1}, \ldots,x_{j-1}, 0,(x_i+x_j)/2, x_{j+1}, \ldots,  x_m\right), \notag\\
\vec{z}
&  = \left(x_1,\ldots,x_{i-1},0, (x_i+x_j)/2,x_{i+1}, \ldots,x_{j-1},(x_i+x_j)/2, 0,x_{j+1}, \ldots,  x_m\right). \notag
\end{align}
\end{proposition}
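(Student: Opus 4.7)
The plan is to prove both $\lambda(\hat{p})\le \lambda(p)$ and the stronger statement that $L(\vec{y},\vec{z})\subseteq Z(\hat{p})$ for every $\vec{x}\in Z(p)$; combined, these force $\lambda(\hat{p})=\lambda(p)$.

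\emph{Upper bound.} Fix $\vec{y}\in \Delta_{m+1}$, set $s := y_i+y_i'+y_j+y_j'$, and observe that the four ``new'' coordinates enter $\hat{p}$ only through $s$ and the two products $(y_i+y_i')(y_j+y_j')$ and $(y_i+y_j)(y_i'+y_j')$. Writing $a := y_i+y_i'$ and $c := y_i+y_j$, both of which lie in $[0,s]$, these products equal $a(s-a)$ and $c(s-c)$, each at most $s^2/4$. Since $p_4,p_5$ are nonnegative and $p_4+p_5=p_3$, this yields the pointwise bound
\begin{align}
\hat{p}(\vec{y}) \;\le\; p_1 + p_2\, s + p_3\cdot \frac{s^2}{4},\notag
\end{align}
in which $p_1,p_2,p_3$ are evaluated at the coordinates of $\vec{y}$ outside $\{i,i',j,j'\}$. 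The right-hand side equals $p(\vec{x})$ at the point $\vec{x}\in\Delta_{m-1}$ obtained from $\vec{y}$ by copying the remaining coordinates and setting $x_i=x_j=s/2$; hence $\hat{p}(\vec{y})\le \lambda(p)$, proving $\lambda(\hat{p})\le \lambda(p)$.

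\emph{Lower bound and segment.} Fix $\vec{x}\in Z(p)$ and let $t := (x_i+x_j)/2$. A generic point $\vec{w}=\alpha\vec{y}+(1-\alpha)\vec{z}$ of $L(\vec{y},\vec{z})$ has $w_i=\alpha t$, $w_i'=(1-\alpha)t$, $w_j=(1-\alpha)t$, $w_j'=\alpha t$, and matches $\vec{x}$ on all other coordinates. A one-line check gives
\begin{align}
w_i+w_i' \;=\; w_j+w_j' \;=\; w_i+w_j \;=\; w_i'+w_j' \;=\; t,\notag
\end{align}
so $\hat{p}(\vec{w}) = p_1 + 2p_2\, t + (p_4+p_5)\, t^2 = p_1 + 2p_2\, t + p_3\, t^2$, independent of $\alpha$. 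Comparing with $p(\vec{x}) = p_1 + 2p_2\, t + p_3\, x_ix_j = \lambda(p)$, one obtains $\hat{p}(\vec{w}) - p(\vec{x}) = p_3\,(x_i-x_j)^2/4 \ge 0$, using $p_3\ge 0$. Combined with the upper bound $\hat{p}(\vec{w})\le \lambda(p)$, this forces equality, delivering simultaneously $\lambda(\hat{p})\ge \lambda(p)$ and $L(\vec{y},\vec{z})\subseteq Z(\hat{p})$.

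\emph{Main idea and potential difficulty.} The only nonobvious step is spotting the substitution $a=y_i+y_i'$, $c=y_i+y_j$, which decouples the four-variable optimization into two independent one-dimensional problems on $[0,s]$; afterwards both halves reduce to the elementary bound $x(s-x)\le s^2/4$ together with the hypothesis $p_4,p_5\ge 0$. The particular choice of endpoints $\vec{y},\vec{z}$ in the proposition is engineered precisely so that both products $(w_i+w_i')(w_j+w_j')$ and $(w_i+w_j)(w_i'+w_j')$ are equal to $t^2$ uniformly along the segment, which is what makes $\hat{p}$ constant on $L(\vec{y},\vec{z})$ and hence identifies this segment as a line of maximizers of $\hat{p}$.
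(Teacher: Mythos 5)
Your proof is correct and follows essentially the same route as the paper: AM-GM (in the form $a(s-a)\le s^2/4$) gives the upper bound $\lambda(\hat p)\le\lambda(p)$, and a direct evaluation shows $\hat p$ is constant along the segment. The one small streamlining is that instead of invoking the paper's preparatory symmetrization lemma (which asserts that replacing $(x_i,x_j)$ by $\bigl((x_i+x_j)/2,(x_i+x_j)/2\bigr)$ stays in $Z(p)$), you bracket directly via the identity $\hat p(\vec w)-p(\vec x)=p_3\,(x_i-x_j)^2/4\ge 0$ together with $\hat p(\vec w)\le\lambda(p)$, which makes the argument self-contained.
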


\subsection{Nonminimal hypergraphs}\label{SUBSEC:nonminimal-hypergraphs}
Now we return to hypergraphs.
Our constructions are closely related to the so-called nonminimal hypergraphs.
To define them properly we need some definitions related to the Lagrangian of a hypergraph,
which was introduced by Frankl and R\"odl in~\cite{FR84}.

For an $r$-graph $\mathcal{G}$ on $m$ vertices
(let us assume for notational transparency that $V(\mathcal{G}) = [m]$) the multilinear polynomial
$p_{\mathcal{G}}$ is defined by
\begin{align}
p_{\mathcal{G}}(X_1,\ldots,X_m) = \sum_{E\in \mathcal{G}}\prod_{i\in E}X_i. \notag
\end{align}
The \emph{Lagrangian} of $\mathcal{G}$ is defined by $\lambda(\mathcal{G}) = \lambda(p_{\mathcal{G}})$.
Define
\begin{align}
Z(\mathcal{G}) = Z(p_{\mathcal{G}})
= \left\{(x_1,\ldots,x_n)\in \Delta_{m-1}\colon p_{\mathcal{G}}(x_1,\ldots,x_m) = \lambda(\mathcal{G})\right\}. \notag
\end{align}

For a vertex set $S\subset V(\mathcal{G})$ we use
$\mathcal{G}-S$ to denote the induced subgraph of $\mathcal{G}$ on $V(\mathcal{G})\setminus S$.

\begin{definition}
An $r$-graph $\mathcal{G}$ is minimal if $\lambda(\mathcal{G}-v)< \lambda(\mathcal{G})$
for every vertex $v\in \mathcal{G}$. Otherwise, it is nonminimal.
\end{definition}

A substantial amount of research on hypergraph Tur\'{a}n problems is about minimal hypergraphs.
Indeed,  for every nondegenerate family $\mathcal{F}$ of hypergraphs that was studied before this work,
the extremal $\mathcal{F}$-free constructions are either minimal or blowups of minimal hypergraphs or close to them in edit-distance.
Conversely, in~\cite{PI14}, Pikhurko proved that for every minimal hypergraph $\mathcal{G}$
there exists a finite family $\mathcal{F}$ whose unique extremal construction is a blowup of $\mathcal{G}$
\footnote{Pikhurko's results are much more general than what we stated here,
and we refer the reader to~\cite{PI14} for more details.}.
In contrast, there was very little literature about nonminimal hypergraphs.

Parallel to Pikhurko's result about minimal hypergraphs we have the following result about nonminimal hypergraphs.

\begin{proposition}\label{PROP:nonminal-finite-family}
Let $r\ge 3$ be an integer.
For every nonminimal $r$-graph $\mathcal{G}$ there exists a finite family $\mathcal{F}$ of $r$-graphs
such that for every integer $n\in\mathbb{N}$
there exists a maximum $\mathcal{F}$-free $r$-graph on $n$-vertices which is a blowup of $\mathcal{G}$.
\end{proposition}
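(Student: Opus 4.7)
The plan is to reduce the nonminimal case to Pikhurko's theorem for minimal hypergraphs \cite{PI14}, which is already available in the literature.

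The first step is to pass from $\mathcal{G}$ to an induced subgraph of the same Lagrangian that is actually minimal. Since $\mathcal{G}$ is nonminimal, there is some $v\in V(\mathcal{G})$ with $\lambda(\mathcal{G}-v)\ge \lambda(\mathcal{G})$; combined with the trivial bound $\lambda(\mathcal{G}-v)\le \lambda(\mathcal{G})$ (extend any weighting on $V(\mathcal{G})\setminus\{v\}$ by assigning $0$ to $v$), this forces $\lambda(\mathcal{G}-v)=\lambda(\mathcal{G})$. I would then iterate this deletion on $\mathcal{G}-v$: as long as the current hypergraph is still nonminimal, another such vertex exists and can be removed without changing the Lagrangian. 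Because the vertex count strictly decreases at each step the process terminates, producing an induced subgraph $\mathcal{G}_0\subseteq \mathcal{G}$ which is minimal and satisfies $\lambda(\mathcal{G}_0)=\lambda(\mathcal{G})$.

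Next, I would invoke Pikhurko's theorem applied to $\mathcal{G}_0$. This gives a finite family $\mathcal{F}$ of $r$-graphs such that, for every sufficiently large $n$, the unique (up to isomorphism) maximum $\mathcal{F}$-free $r$-graph on $n$ vertices is a blowup of $\mathcal{G}_0$. Small values of $n$ can be handled either by interpreting the statement for $n$ beyond some threshold, or by enlarging $\mathcal{F}$ to forbid trivial extremal configurations such as small complete $r$-graphs, without affecting the large-$n$ structure.

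The final step is the elementary observation that every blowup of $\mathcal{G}_0$ is automatically a blowup of $\mathcal{G}$. Indeed, if $\psi\colon V(\mathcal{H})\to V(\mathcal{G}_0)$ witnesses $\mathcal{H}$ as a blowup of $\mathcal{G}_0$, then, viewed as a map into $V(\mathcal{G})$ via the inclusion $V(\mathcal{G}_0)\subseteq V(\mathcal{G})$, the same map witnesses $\mathcal{H}$ as a blowup of $\mathcal{G}$: since $\mathcal{G}_0$ is the induced subgraph of $\mathcal{G}$ on $V(\mathcal{G}_0)$, for every $r$-set $E\subseteq V(\mathcal{H})$ we have $\psi(E)\in\mathcal{G}$ iff $\psi(E)\in\mathcal{G}_0$ iff $E\in\mathcal{H}$. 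Thus the family $\mathcal{F}$ produced for $\mathcal{G}_0$ already serves $\mathcal{G}$, as required.

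The bulk of the work is absorbed by Pikhurko's theorem; the only genuinely new ingredient is the elementary observation that nonminimality permits iterative vertex deletion while preserving the Lagrangian, so the main obstacle is not technical but conceptual, namely spotting that this reduction brings the nonminimal case within reach of the existing minimal-case machinery.
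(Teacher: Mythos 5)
Your reduction is correct and does establish the proposition as literally stated, but it follows a genuinely different route from the paper's. The paper proves a stronger, essentially self-contained statement (Proposition~\ref{PROP:nonminal-finite-family+}): for \emph{every} $r$-graph $\mathcal{G}$, with the explicit choice $M = v(\mathcal{G})+1+(r-2)\binom{v(\mathcal{G})+1}{2}$, one has $\mathrm{ex}(n,\mathcal{F}_M(\mathcal{G})) = \max\{|\mathcal{H}|\colon v(\mathcal{H})=n,\ \mathcal{H}\ \text{is}\ \mathcal{G}\text{-colorable}\}$ for all $n$. The proof combines blowup-invariance of $\mathcal{F}_M(\mathcal{G})$ (Lemma~\ref{LEMMA:F-free-equ-F-homo-free}), Zykov symmetrization (Theorem~\ref{THEOREM:ex(n,F)=max-blowup-of-H}), and a clique bound on the shadow (Lemma~\ref{LEMMA:F-free-shadow-Km-free}) to show every symmetrized $\mathcal{F}_M(\mathcal{G})$-free graph is $\mathcal{G}$-colorable, with no appeal to (non)minimality. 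You instead pass to a minimal induced subgraph $\mathcal{G}_0\subseteq\mathcal{G}$ of the same Lagrangian and invoke Pikhurko's theorem as a black box; the observation that a blowup of $\mathcal{G}_0$ is a blowup of $\mathcal{G}$ (since $\mathcal{G}_0$ is an \emph{induced} subgraph) is fine.

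The real cost of your route is that the family you obtain is tuned to $\mathcal{G}_0$, not $\mathcal{G}$: Pikhurko's theorem gives a \emph{unique} extremal $\mathcal{F}$-free graph, namely the optimal blowup of the smaller $\mathcal{G}_0$, viewed degenerately as a blowup of $\mathcal{G}$ with the leftover parts empty. So your $\mathcal{F}$ has $\xi(\mathcal{F})=1$. This satisfies the proposition's letter, but it is useless for the paper's purpose: Theorem~\ref{THM:nonminimal-general-turan-stability} requires a family whose extremal constructions form a one-parameter family of blowups of the full (nonminimal) $\mathcal{G}$, which is exactly what $\mathcal{F}_M(\mathcal{G})$ --- not $\mathcal{F}_M(\mathcal{G}_0)$ --- delivers, and what drives $\xi(\mathcal{F})=\infty$. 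Also, your handling of small $n$ is a bit off: adding small complete $r$-graphs to $\mathcal{F}$ is not the right repair; the cleaner fix (which the paper's explicit $M$ encodes automatically) is that any $\mathcal{F}_M(\mathcal{G}_0)$-free graph on at most $M$ vertices is, by definition, $\mathcal{G}_0$-colorable.
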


Next, we introduce an operation motivated by Proposition~\ref{PROP:multilinear-polynomial}.
It shows  that for a certain class of $3$-graphs $\mathcal{G}$
we can produce a nonminimal $3$-graph containing $\mathcal{G}$.

For a $3$-graph $\mathcal{G}$ and a pair of vertices $\{u,v\}\subset V(\mathcal{G})$
the \emph{neighborhood} $N_{\mathcal{G}}(u,v)$ and \emph{codegree} $d_{\mathcal{G}}(u,v)$ of $\{u,v\}$ are defined as
$$N_{\mathcal{G}}(u,v) = \{w\in\mathcal{G}\colon \{u,v,w\}\in\mathcal{G}\}
\quad\text{and}\quad d_{\mathcal{G}}(u,v) = |N_{\mathcal{G}}(u,v)|.$$
We say $\mathcal{G}$ is \emph{$2$-covered} if $d_{\mathcal{G}}(u,v) \ge 1$ for every pair $\{u,v\}\subset V(\mathcal{G})$. For a vertex $u \in V(\mathcal{G})$
the \emph{link} of $v$ is defined as
$$L_{\mathcal{G}}(u) = \{\{v,w\}\subset\mathcal{G}\colon \{u,v,w\}\in\mathcal{G}\}.$$ Vertices $u$ and $u'$ in $\mathcal{G}$ are clones if
$L_{\mathcal{G}}(u)=L_{\mathcal{G}}(v)$.

\begin{definition}[Crossed blowup]
Let $\mathcal{G}$ be a $3$-graph and
$\{v_1,v_2\}\subset \mathcal{G}$ be a pair of vertices with $d(v_1,v_2) = k \ge 2$.
Fix an ordering of the vertices in $N_{\mathcal{G}}(v_1,v_2)$, say $N_{\mathcal{G}}(v_1,v_2) = \{u_1,\ldots, u_k\}$.
The crossed blowup $\mathcal{G}\boxplus\{v_1,v_2\}$ of $\mathcal{G}$ on $\{v_1,v_2\}$ is defined as follows.
\begin{enumerate}[label=(\alph*)]
\item Remove all edges containing the pair $\{v_1,v_2\}$ from $\mathcal{G}$,
\item add two new vertices $v_1'$ and $v_2'$, make $v_1'$ a clone of $v_1$ and $v_2'$ a clone of $v_2$,
\item for every $i\in [k-1]$ add the edge set $\{u_iv_1v_1',u_iv_1v_2',u_iv_2v_1',u_iv_2v_2'\}$,
                and for $i = k$ add the edge set $\{u_kv_1v_2,u_kv_1v_2',u_kv_1'v_2,u_kv_1'v_2'\}$.
\end{enumerate}
\end{definition}
\noindent\textbf{Remark.}
If we replace $\{u_kv_1v_2,u_kv_1v_2',u_kv_1'v_2,u_kv_1'v_2'\}$
by $\{u_kv_1v_1',u_kv_1v_2',u_kv_2v_1',u_kv_2v_2'\}$ in Step (c),
then we obtain an ordinary blowup of $\mathcal{G}$.

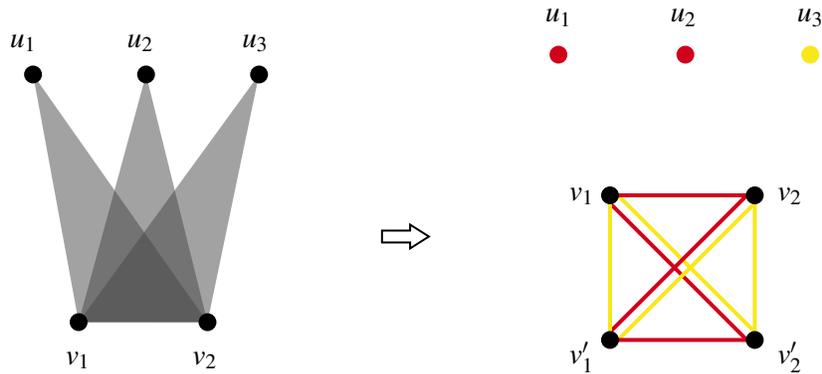
\begin{figure}[htbp]
\centering
\tikzset{every picture/.style={line width=0.75pt}} 
\begin{tikzpicture}[x=0.75pt,y=0.75pt,yscale=-1,xscale=1]
\draw [draw opacity=0][fill=uuuuuu,fill opacity=0.5 ]   (164,341.57) -- (229,341.6) -- (255,216.57) -- cycle ;
\draw [draw opacity=0][fill=uuuuuu,fill opacity=0.5 ]   (164,341.57) -- (229,341.6) -- (198,216.57) -- cycle ;
\draw [draw opacity=0][fill=uuuuuu,fill opacity=0.5 ]   (164,341.57) -- (229,341.6) -- (141,216.57) -- cycle ;
\draw  [color={rgb, 255:red, 0; green, 0; blue, 0 }  ,draw opacity=1 ][fill={rgb, 255:red, 0; green, 0; blue, 0 }  ,fill opacity=1 ] (137,216.57) .. controls (137,214.36) and (138.79,212.57) .. (141,212.57) .. controls (143.21,212.57) and (145,214.36) .. (145,216.57) .. controls (145,218.78) and (143.21,220.57) .. (141,220.57) .. controls (138.79,220.57) and (137,218.78) .. (137,216.57) -- cycle ;
\draw  [color={rgb, 255:red, 0; green, 0; blue, 0 }  ,draw opacity=1 ][fill={rgb, 255:red, 0; green, 0; blue, 0 }  ,fill opacity=1 ] (194,216.57) .. controls (194,214.36) and (195.79,212.57) .. (198,212.57) .. controls (200.21,212.57) and (202,214.36) .. (202,216.57) .. controls (202,218.78) and (200.21,220.57) .. (198,220.57) .. controls (195.79,220.57) and (194,218.78) .. (194,216.57) -- cycle ;
\draw  [color={rgb, 255:red, 0; green, 0; blue, 0 }  ,draw opacity=1 ][fill={rgb, 255:red, 0; green, 0; blue, 0 }  ,fill opacity=1 ] (251,216.57) .. controls (251,214.36) and (252.79,212.57) .. (255,212.57) .. controls (257.21,212.57) and (259,214.36) .. (259,216.57) .. controls (259,218.78) and (257.21,220.57) .. (255,220.57) .. controls (252.79,220.57) and (251,218.78) .. (251,216.57) -- cycle ;
\draw  [color={rgb, 255:red, 208; green, 2; blue, 27 }  ,draw opacity=1 ][fill={rgb, 255:red, 208; green, 2; blue, 27 }  ,fill opacity=1 ] (402,206.57) .. controls (402,204.36) and (403.79,202.57) .. (406,202.57) .. controls (408.21,202.57) and (410,204.36) .. (410,206.57) .. controls (410,208.78) and (408.21,210.57) .. (406,210.57) .. controls (403.79,210.57) and (402,208.78) .. (402,206.57) -- cycle ;
\draw  [color={rgb, 255:red, 208; green, 2; blue, 27 }  ,draw opacity=1 ][fill={rgb, 255:red, 208; green, 2; blue, 27 }  ,fill opacity=1 ]   (466,206.57) .. controls (466,204.36) and (467.79,202.57) .. (470,202.57) .. controls (472.21,202.57) and (474,204.36) .. (474,206.57) .. controls (474,208.78) and (472.21,210.57) .. (470,210.57) .. controls (467.79,210.57) and (466,208.78) .. (466,206.57) -- cycle ;
\draw [color={rgb, 255:red, 248; green, 231; blue, 28 }  ,draw opacity=1 ][fill={rgb, 255:red, 248; green, 231; blue, 28 }  ,fill opacity=1 ] (529,206.57) .. controls (529,204.36) and (530.79,202.57) .. (533,202.57) .. controls (535.21,202.57) and (537,204.36) .. (537,206.57) .. controls (537,208.78) and (535.21,210.57) .. (533,210.57) .. controls (530.79,210.57) and (529,208.78) .. (529,206.57) -- cycle ;
\draw  [color={rgb, 255:red, 0; green, 0; blue, 0 }  ,draw opacity=1 ][fill={rgb, 255:red, 0; green, 0; blue, 0 }  ,fill opacity=1 ] (160,341.57) .. controls (160,339.36) and (161.79,337.57) .. (164,337.57) .. controls (166.21,337.57) and (168,339.36) .. (168,341.57) .. controls (168,343.78) and (166.21,345.57) .. (164,345.57) .. controls (161.79,345.57) and (160,343.78) .. (160,341.57) -- cycle ;
\draw  [color={rgb, 255:red, 0; green, 0; blue, 0 }  ,draw opacity=1 ][fill={rgb, 255:red, 0; green, 0; blue, 0 }  ,fill opacity=1 ] (225,341.6) .. controls (225,339.39) and (226.79,337.6) .. (229,337.6) .. controls (231.21,337.6) and (233,339.39) .. (233,341.6) .. controls (233,343.81) and (231.21,345.6) .. (229,345.6) .. controls (226.79,345.6) and (225,343.81) .. (225,341.6) -- cycle ;
\draw [color={rgb, 255:red, 208; green, 2; blue, 27 }  ,draw opacity=1 ][line width=1.5]    (432,281.57) -- (501,350.57) ;
\draw [color={rgb, 255:red, 248; green, 231; blue, 28 }  ,draw opacity=1 ][line width=1.5]    (436,277.57) -- (505,346.57) ;
\draw [color={rgb, 255:red, 208; green, 2; blue, 27 }  ,draw opacity=1 ][line width=1.5]    (432,346.57) -- (501,277.57) ;
\draw [color={rgb, 255:red, 248; green, 231; blue, 28 }  ,draw opacity=1 ][line width=1.5]    (436,350.57) -- (505,281.57) ;
\draw [color={rgb, 255:red, 208; green, 2; blue, 27 }  ,draw opacity=1 ][line width=1.5]    (432,277.57) -- (505,277.57) ;
\draw [color={rgb, 255:red, 208; green, 2; blue, 27 }  ,draw opacity=1 ][line width=1.5]    (432,350.57) -- (505,350.57) ;
\draw [color={rgb, 255:red, 248; green, 231; blue, 28 }  ,draw opacity=1 ][line width=1.5]    (505,277.57) -- (505,350.57) ;
\draw  [color={rgb, 255:red, 0; green, 0; blue, 0 }  ,draw opacity=1 ][fill={rgb, 255:red, 0; green, 0; blue, 0 }  ,fill opacity=1 ] (501,277.57) .. controls (501,275.36) and (502.79,273.57) .. (505,273.57) .. controls (507.21,273.57) and (509,275.36) .. (509,277.57) .. controls (509,279.78) and (507.21,281.57) .. (505,281.57) .. controls (502.79,281.57) and (501,279.78) .. (501,277.57) -- cycle ;
\draw  [color={rgb, 255:red, 0; green, 0; blue, 0 }  ,draw opacity=1 ][fill={rgb, 255:red, 0; green, 0; blue, 0 }  ,fill opacity=1 ] (501,350.57) .. controls (501,348.36) and (502.79,346.57) .. (505,346.57) .. controls (507.21,346.57) and (509,348.36) .. (509,350.57) .. controls (509,352.78) and (507.21,354.57) .. (505,354.57) .. controls (502.79,354.57) and (501,352.78) .. (501,350.57) -- cycle ;
\draw [color={rgb, 255:red, 248; green, 231; blue, 28 }  ,draw opacity=1 ][line width=1.5]    (432,277.57) -- (432,350.57) ;
\draw  [color={rgb, 255:red, 0; green, 0; blue, 0 }  ,draw opacity=1 ][fill={rgb, 255:red, 0; green, 0; blue, 0 }  ,fill opacity=1 ] (428,350.57) .. controls (428,348.36) and (429.79,346.57) .. (432,346.57) .. controls (434.21,346.57) and (436,348.36) .. (436,350.57) .. controls (436,352.78) and (434.21,354.57) .. (432,354.57) .. controls (429.79,354.57) and (428,352.78) .. (428,350.57) -- cycle ;
\draw  [color={rgb, 255:red, 0; green, 0; blue, 0 }  ,draw opacity=1 ][fill={rgb, 255:red, 0; green, 0; blue, 0 }  ,fill opacity=1 ] (428,277.57) .. controls (428,275.36) and (429.79,273.57) .. (432,273.57) .. controls (434.21,273.57) and (436,275.36) .. (436,277.57) .. controls (436,279.78) and (434.21,281.57) .. (432,281.57) .. controls (429.79,281.57) and (428,279.78) .. (428,277.57) -- cycle ;
\draw   (317,295.37) -- (331.19,295.37) -- (331.19,292.37) -- (340.66,298.37) -- (331.19,304.37) -- (331.19,301.37) -- (317,301.37) -- cycle ;
\draw (128,195) node [anchor=north west][inner sep=0.75pt]   [align=left] {$u_1$};
\draw (187,195) node [anchor=north west][inner sep=0.75pt]   [align=left] {$u_2$};
\draw (245,195) node [anchor=north west][inner sep=0.75pt]   [align=left] {$u_3$};
\draw (156,355) node [anchor=north west][inner sep=0.75pt]   [align=left] {$v_1$};
\draw (220,355) node [anchor=north west][inner sep=0.75pt]   [align=left] {$v_2$};
\draw (398,182) node [anchor=north west][inner sep=0.75pt]   [align=left] {$u_1$};
\draw (461,182) node [anchor=north west][inner sep=0.75pt]   [align=left] {$u_2$};
\draw (525,182) node [anchor=north west][inner sep=0.75pt]   [align=left] {$u_3$};
\draw (410,272) node [anchor=north west][inner sep=0.75pt]   [align=left] {$v_1$};
\draw (515,272) node [anchor=north west][inner sep=0.75pt]   [align=left] {$v_2$};
\draw (410,350) node [anchor=north west][inner sep=0.75pt]   [align=left] {$v'_1$};
\draw (515,350) node [anchor=north west][inner sep=0.75pt]   [align=left] {$v'_2$};
\end{tikzpicture}
\caption{$\{u_1v_1v_2,u_2v_1v_2,u_3v_1v_2\}$ and $\{u_1v_1v_2,u_2v_1v_2,u_3v_1v_2\}\boxplus\{v_1,v_2\}$.
        The link of red vertices is the red $K_{2,2}$, the link of the yellow vertex is the yellow $K_{2,2}$.}
\end{figure}

\begin{figure}[htbp]
\centering
\tikzset{every picture/.style={line width=0.75pt}} 
\begin{tikzpicture}[x=0.75pt,y=0.75pt,yscale=-1,xscale=1]
\draw [draw opacity=0][fill=uuuuuu,fill opacity=0.5 ][line width=1.5]    (197.5,166.5) -- (133,209) -- (197.67,93) -- cycle ;
\draw [draw opacity=0][fill=uuuuuu,fill opacity=0.9 ][line width=1.5]    (197.67,93) -- (197.5,166.5) -- (262.13,209) -- cycle ;
\draw [draw opacity=0][fill=uuuuuu,fill opacity=0.7 ][line width=1.5]    (197.5,166.5) -- (262.13,209) -- (133,209) -- cycle ;
\draw [draw opacity=0][fill=uuuuuu,fill opacity=0.5 ]   (456,50) -- (456,94) -- (416,145) -- cycle ;
\draw [draw opacity=0][fill=uuuuuu,fill opacity=0.5 ]   (456,50) -- (456,94) -- (489,145) -- cycle ;
\draw [draw opacity=0][fill=uuuuuu,fill opacity=0.5 ]   (456,50) -- (456,94) -- (416,221) -- cycle ;
\draw [draw opacity=0][fill=uuuuuu,fill opacity=0.5 ]   (456,50) -- (456,94) -- (489,221) -- cycle ;
\draw  [fill={rgb, 255:red, 0; green, 0; blue, 0 }  ,fill opacity=1 ] (194,166.5) .. controls (194,164.57) and (195.57,163) .. (197.5,163) .. controls (199.43,163) and (201,164.57) .. (201,166.5) .. controls (201,168.43) and (199.43,170) .. (197.5,170) .. controls (195.57,170) and (194,168.43) .. (194,166.5) -- cycle ;
\draw  [fill={rgb, 255:red, 0; green, 0; blue, 0 }  ,fill opacity=1 ] (194.17,93) .. controls (194.17,91.07) and (195.74,89.5) .. (197.67,89.5) .. controls (199.61,89.5) and (201.17,91.07) .. (201.17,93) .. controls (201.17,94.93) and (199.61,96.5) .. (197.67,96.5) .. controls (195.74,96.5) and (194.17,94.93) .. (194.17,93) -- cycle ;
\draw  [fill={rgb, 255:red, 0; green, 0; blue, 0 }  ,fill opacity=1 ] (258.63,209) .. controls (258.63,207.07) and (260.2,205.5) .. (262.13,205.5) .. controls (264.07,205.5) and (265.63,207.07) .. (265.63,209) .. controls (265.63,210.93) and (264.07,212.5) .. (262.13,212.5) .. controls (260.2,212.5) and (258.63,210.93) .. (258.63,209) -- cycle ;
\draw  [fill={rgb, 255:red, 0; green, 0; blue, 0 }  ,fill opacity=1 ] (129.5,209) .. controls (129.5,207.07) and (131.07,205.5) .. (133,205.5) .. controls (134.93,205.5) and (136.5,207.07) .. (136.5,209) .. controls (136.5,210.93) and (134.93,212.5) .. (133,212.5) .. controls (131.07,212.5) and (129.5,210.93) .. (129.5,209) -- cycle ;
\draw  [color={rgb, 255:red, 208; green, 2; blue, 27 }  ,draw opacity=1 ][fill={rgb, 255:red, 208; green, 2; blue, 27 }  ,fill opacity=1 ] (452.17,52) .. controls (452.17,50.07) and (453.74,48.5) .. (455.67,48.5) .. controls (457.61,48.5) and (459.17,50.07) .. (459.17,52) .. controls (459.17,53.93) and (457.61,55.5) .. (455.67,55.5) .. controls (453.74,55.5) and (452.17,53.93) .. (452.17,52) -- cycle ;
\draw  [color={rgb, 255:red, 248; green, 231; blue, 28 }  ,draw opacity=1 ][fill={rgb, 255:red, 248; green, 231; blue, 28 }  ,fill opacity=1 ] (452.17,93) .. controls (452.17,91.07) and (453.74,89.5) .. (455.67,89.5) .. controls (457.61,89.5) and (459.17,91.07) .. (459.17,93) .. controls (459.17,94.93) and (457.61,96.5) .. (455.67,96.5) .. controls (453.74,96.5) and (452.17,94.93) .. (452.17,93) -- cycle ;
\draw [color={rgb, 255:red, 248; green, 231; blue, 28 }  ,draw opacity=1 ][line width=1.5]    (417,149.2) -- (486,218.2) ;
\draw [color={rgb, 255:red, 208; green, 2; blue, 27 }  ,draw opacity=1 ][line width=1.5]    (421,145.2) -- (490,214.2) ;
\draw [color={rgb, 255:red, 248; green, 231; blue, 28 }  ,draw opacity=1 ][line width=1.5]    (417,214.2) -- (486,145.2) ;
\draw [color={rgb, 255:red, 208; green, 2; blue, 27 }  ,draw opacity=1 ][line width=1.5]    (421,218.2) -- (490,149.2) ;
\draw [color={rgb, 255:red, 208; green, 2; blue, 27 }  ,draw opacity=1 ][line width=1.5]    (417,145.2) -- (490,145.2) ;
\draw [color={rgb, 255:red, 208; green, 2; blue, 27 }  ,draw opacity=1 ][line width=1.5]    (417,218.2) -- (490,218.2) ;
\draw [color={rgb, 255:red, 248; green, 231; blue, 28 }  ,draw opacity=1 ][line width=1.5]    (490,145.2) -- (490,218.2) ;
\draw  [color={rgb, 255:red, 0; green, 0; blue, 0 }  ,draw opacity=1 ][fill={rgb, 255:red, 0; green, 0; blue, 0 }  ,fill opacity=1 ] (486,145.2) .. controls (486,142.99) and (487.79,141.2) .. (490,141.2) .. controls (492.21,141.2) and (494,142.99) .. (494,145.2) .. controls (494,147.41) and (492.21,149.2) .. (490,149.2) .. controls (487.79,149.2) and (486,147.41) .. (486,145.2) -- cycle ;
\draw  [color={rgb, 255:red, 0; green, 0; blue, 0 }  ,draw opacity=1 ][fill={rgb, 255:red, 0; green, 0; blue, 0 }  ,fill opacity=1 ] (486,218.2) .. controls (486,215.99) and (487.79,214.2) .. (490,214.2) .. controls (492.21,214.2) and (494,215.99) .. (494,218.2) .. controls (494,220.41) and (492.21,222.2) .. (490,222.2) .. controls (487.79,222.2) and (486,220.41) .. (486,218.2) -- cycle ;
\draw [color={rgb, 255:red, 248; green, 231; blue, 28 }  ,draw opacity=1 ][fill={rgb, 255:red, 248; green, 231; blue, 28 }  ,fill opacity=1 ][line width=1.5]    (417,145.2) -- (417,218.2) ;
\draw  [color={rgb, 255:red, 0; green, 0; blue, 0 }  ,draw opacity=1 ][fill={rgb, 255:red, 0; green, 0; blue, 0 }  ,fill opacity=1 ] (413,218.2) .. controls (413,215.99) and (414.79,214.2) .. (417,214.2) .. controls (419.21,214.2) and (421,215.99) .. (421,218.2) .. controls (421,220.41) and (419.21,222.2) .. (417,222.2) .. controls (414.79,222.2) and (413,220.41) .. (413,218.2) -- cycle ;
\draw  [color={rgb, 255:red, 0; green, 0; blue, 0 }  ,draw opacity=1 ][fill={rgb, 255:red, 0; green, 0; blue, 0 }  ,fill opacity=1 ] (413,145.2) .. controls (413,142.99) and (414.79,141.2) .. (417,141.2) .. controls (419.21,141.2) and (421,142.99) .. (421,145.2) .. controls (421,147.41) and (419.21,149.2) .. (417,149.2) .. controls (414.79,149.2) and (413,147.41) .. (413,145.2) -- cycle ;
\draw   (321,142) -- (335.19,142) -- (335.19,139) -- (344.66,145) -- (335.19,151) -- (335.19,148) -- (321,148) -- cycle ;
\draw (395,139) node [anchor=north west][inner sep=0.75pt]   [align=left] {3};
\draw (500,139) node [anchor=north west][inner sep=0.75pt]   [align=left] {4};
\draw (395,205) node [anchor=north west][inner sep=0.75pt]   [align=left] {$3'$};
\draw (500,205) node [anchor=north west][inner sep=0.75pt]   [align=left] {$4'$};
\draw (205,85) node [anchor=north west][inner sep=0.75pt]   [align=left] {1};
\draw (115,200) node [anchor=north west][inner sep=0.75pt]   [align=left] {3};
\draw (205,155) node [anchor=north west][inner sep=0.75pt]   [align=left] {2};
\draw (266,200) node [anchor=north west][inner sep=0.75pt]   [align=left] {4};
\draw (465,45) node [anchor=north west][inner sep=0.75pt]   [align=left] {1};
\draw (465,85) node [anchor=north west][inner sep=0.75pt]   [align=left] {2};
\end{tikzpicture}
\caption{$K_4^3$ and $K_4^3\boxplus \{3, 4\}$.}
\end{figure}
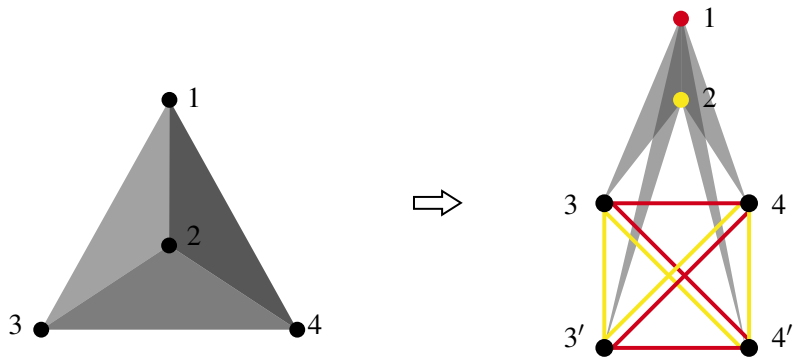

\begin{definition}
Let $\mathcal{G}$ be a $3$-graph.
A pair $\{v_1,v_2\}\subset V(\mathcal{G})$ is symmetric in $\mathcal{G}$ if
$$L_{\mathcal{G}}(v_1) - v_2 = L_{\mathcal{G}}(v_2) - v_1.$$
\end{definition}

The crossed blowup of a $3$-graph has the following properties.

\begin{proposition}\label{PROP:nonminimal-corss-blowup}
Suppose that $\mathcal{G}$ is an $m$-vertex $3$-graph and $\{v_1,v_2\}\subset V(\mathcal{G})$ is a pair of vertices with $d(v_1,v_2)\ge 2$.
Then the following statements hold.
\begin{enumerate}[label=(\alph*)]
\item The $3$-graph $\mathcal{G}$ is contained in $\mathcal{G}\boxplus\{v_1,v_2\}$ as an induced subgraph.
        In particular, $\lambda(\mathcal{G}) \le \lambda(\mathcal{G}\boxplus\{v_1,v_2\})$.
\item The $3$-graph $\mathcal{G}\boxplus\{v_1,v_2\}$ is $2$-covered iff $\mathcal{G}$ is $2$-covered.
\item If $\{v_1,v_2\}$ is symmetric in $\mathcal{G}$,
        then $\lambda(\mathcal{G}\boxplus\{v_1,v_2\}) = \lambda(\mathcal{G})$.
        If, in addition, there exists $(x_1,\ldots,x_{m})\in Z(\mathcal{G})$ with $x_1 + x_2 > 0$,
        then the set $Z(\mathcal{G}\boxplus\{v_1,v_2\})$ contains a one-dimensional simplex (i.e. a nontrivial line segment).
\end{enumerate}
\end{proposition}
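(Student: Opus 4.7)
For (a), I plan to exhibit $\mathcal{G}$ as the induced subgraph of $\mathcal{G}\boxplus\{v_1,v_2\}$ on the vertex set $S=(V(\mathcal{G})\setminus\{v_1\})\cup\{v_1'\}$, via the bijection $\phi\colon V(\mathcal{G})\to S$ that sends $v_1\mapsto v_1'$ and fixes every other vertex. An edge-by-edge check verifies this: edges of $\mathcal{G}$ avoiding $v_1$ survive untouched and lie in $\mathcal{G}\boxplus\{v_1,v_2\}[S]$; edges $\{v_1,a,b\}$ with $v_2\notin\{a,b\}$ correspond to clone edges $\{v_1',a,b\}$ introduced in step (b); each edge $u_iv_1v_2$ with $i\in[k-1]$ maps to $u_iv_1'v_2$ added in step (c); and the edge $u_kv_1v_2$ maps to $u_kv_2v_1'$, which is also added in step (c). A symmetric check rules out any extra edges on $S$. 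The inequality $\lambda(\mathcal{G})\le\lambda(\mathcal{G}\boxplus\{v_1,v_2\})$ then follows by pushing forward any maximizer of $p_{\mathcal{G}}$ on $\Delta_{m-1}$ to $\Delta_{m+1}$ along $\phi$, assigning zero weight to $v_1$ and $v_2'$.

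For (b), I plan a case analysis on a pair $\{a,b\}\subset V(\mathcal{G}\boxplus\{v_1,v_2\})$, using the explicit list of edges of $\mathcal{G}\boxplus\{v_1,v_2\}$ (edges inherited from $\mathcal{G}$ that avoid the pair $\{v_1,v_2\}$, clone edges for $v_1'$ and $v_2'$, and the two families of step (c) edges for $i<k$ and $i=k$). In the forward direction, every pair inside $V(\mathcal{G})$ has its $\mathcal{G}$-witness preserved in $\mathcal{G}\boxplus\{v_1,v_2\}$ unless the only such witness is $u_kv_1v_2$, and the three affected pairs $\{v_1,v_2\}, \{v_1,u_k\}, \{v_2,u_k\}$ are each covered by a step (c) edge; pairs involving a primed vertex are covered either by a step (c) edge or by a clone edge lifted from a witness in $\mathcal{G}$, using $k\ge 2$ to guarantee that the $i<k$ family of step (c) edges is nonempty. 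Conversely, the restriction of $\mathcal{G}\boxplus\{v_1,v_2\}$ to $V(\mathcal{G})$ equals $\mathcal{G}\setminus\{u_kv_1v_2\}$, and every clone edge or step (c) edge containing a pair $\{a,b\}\subset V(\mathcal{G})$ translates, via $v_1'\mapsto v_1$ or $v_2'\mapsto v_2$, to an edge of $\mathcal{G}$ containing $\{a,b\}$; so any witness in $\mathcal{G}\boxplus\{v_1,v_2\}$ yields a witness in $\mathcal{G}$.

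For (c), my plan is to invoke Proposition~\ref{PROP:multilinear-polynomial} directly. Partitioning the edges of $\mathcal{G}$ by their intersection with $\{v_1,v_2\}$ and using the symmetry assumption to identify edges through only $v_1$ with edges through only $v_2$, one obtains
\[
p_{\mathcal{G}}=p_1+p_2\left(X_{v_1}+X_{v_2}\right)+p_3\,X_{v_1}X_{v_2},\qquad p_3=\sum_{i=1}^{k}X_{u_i}.
\]
A direct edge count for the crossed blowup then shows
\[
p_{\mathcal{G}\boxplus\{v_1,v_2\}}=p_1+p_2(X_{v_1}+X_{v_1'}+X_{v_2}+X_{v_2'})+p_4(X_{v_1}+X_{v_1'})(X_{v_2}+X_{v_2'})+p_5(X_{v_1}+X_{v_2})(X_{v_1'}+X_{v_2'}),
\]
where the clone edges of step (b) account for the $p_2(X_{v_1'}+X_{v_2'})$ term, the step (c) edges for $i\in[k-1]$ (an \emph{uncrossed} $K_{2,2}$-link at each $u_i$) contribute $p_4=\sum_{i=1}^{k-1}X_{u_i}$ times the uncrossed quadratic, and the step (c) edges for $i=k$ (a \emph{crossed} $K_{2,2}$-link at $u_k$) contribute $p_5=X_{u_k}$ times the crossed quadratic. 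Since $p_4,p_5\ge 0$ on the simplex and $p_4+p_5=p_3$, Proposition~\ref{PROP:multilinear-polynomial} applies verbatim to give $\lambda(\mathcal{G}\boxplus\{v_1,v_2\})=\lambda(\mathcal{G})$ and, whenever $(x_1,\ldots,x_m)\in Z(\mathcal{G})$ with $x_1+x_2>0$, a nontrivial line segment $L(\vec{y},\vec{z})\subset Z(\mathcal{G}\boxplus\{v_1,v_2\})$. The main obstacle is purely bookkeeping: the case analyses in (a) and (b) must be pinned down carefully, while (c) is, by design, a restatement of Proposition~\ref{PROP:multilinear-polynomial}.
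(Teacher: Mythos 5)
Your proposal is correct and follows essentially the same route as the paper: part (a) is the paper's observation that the induced subgraph on $\{v_1',v_2,v_3,\ldots,v_m\}$ (the paper's $U_2$) is a copy of $\mathcal{G}$, part (b) is a direct check that the paper leaves to the reader, and part (c) uses exactly the paper's decomposition of $p_{\mathcal{G}}$ into $p_1+p_2(X_{v_1}+X_{v_2})+p_3X_{v_1}X_{v_2}$ with $p_4=\sum_{i<k}X_{u_i}$ and $p_5=X_{u_k}$, feeding into Proposition~\ref{PROP:multilinear-polynomial} (the paper packages this as Proposition~\ref{PROP:nonminimal-corss-blowup+}, but the substance is identical).
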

\noindent\textbf{Remarks.}
\begin{itemize}
\item It is easy to see that Proposition~\ref{PROP:nonminimal-corss-blowup}~(b) does not necessarily
hold for the ordinary blowup of $\mathcal{G}$.

\item The requirement that $\{v_1,v_2\}$ is symmetric in
Proposition~\ref{PROP:nonminimal-corss-blowup}~(c)
cannot be removed since some calculations show that
$\lambda(C_{5}^{3}) = 1/25$ while $\lambda(C_{5}^{3}\boxplus\{4,5\}) = 4/81$,
where $C_{5}^{3} = \{123,234,345,451,512\}$.
\end{itemize}

\begin{figure}[htbp]
\centering
\tikzset{every picture/.style={line width=0.75pt}} 
\begin{tikzpicture}[x=0.75pt,y=0.75pt,yscale=-1,xscale=1]
\draw [color={rgb, 255:red, 248; green, 231; blue, 28 }  ,draw opacity=1 ][line width=1.5]    (393.5,210) -- (449,168.5) ;
\draw [color={rgb, 255:red, 208; green, 2; blue, 27 }  ,draw opacity=1 ][line width=1.5]    (397,213.5) -- (452.5,172) ;
\draw [color={rgb, 255:red, 248; green, 231; blue, 28 }  ,draw opacity=1 ][line width=1.5]    (397,168.5) -- (452.5,210) ;
\draw [color={rgb, 255:red, 208; green, 2; blue, 27 }  ,draw opacity=1 ][line width=1.5]    (393.5,172) -- (449,213.5) ;
\draw [color={rgb, 255:red, 208; green, 2; blue, 27 }  ,draw opacity=1 ][line width=1.5]    (393.5,168.5) -- (452.5,168.5) ;
\draw [color={rgb, 255:red, 208; green, 2; blue, 27 }  ,draw opacity=1 ][line width=1.5]    (452.52,213.26) -- (393.5,213.5) ;
\draw [draw opacity=0][fill=uuuuuu,fill opacity=0.2 ]   (478.5,122.5) -- (423.82,88.97) -- (452.5,213.5) -- cycle ;
\draw [draw opacity=0][fill=uuuuuu,fill opacity=0.2 ]   (368.5,122.5) -- (423.58,89.38) -- (393.5,213.5) -- cycle ;
\draw [color={rgb, 255:red, 248; green, 231; blue, 28 }  ,draw opacity=1 ][line width=1.5]    (393.5,213.5) -- (393.5,168.5) ;
\draw [color={rgb, 255:red, 248; green, 231; blue, 28 }  ,draw opacity=1 ][line width=1.5]    (452.52,213.26) -- (452.52,168.26) ;
\draw [draw opacity=0][fill=uuuuuu,fill opacity=0.4 ]   (478.5,122.5) -- (423.58,89.38) -- (452.5,168.5) -- cycle ;
\draw [draw opacity=0][fill=uuuuuu,fill opacity=0.4 ]   (368.5,122.5) -- (423.5,89.5) -- (393.5,168.5) -- cycle ;
\draw [draw opacity=0][fill=uuuuuu,fill opacity=0.3 ]   (195.07,107) -- (244.58,140.86) -- (225.67,195.64) -- cycle ;
\draw [draw opacity=0][fill=uuuuuu,fill opacity=0.3 ]   (145.55,140.86) -- (195.07,107) -- (244.58,140.86) -- cycle ;
\draw [draw opacity=0][fill=uuuuuu,fill opacity=0.3 ]   (145.55,140.86) -- (164.46,195.64) -- (195.07,107) -- cycle ;
\draw [draw opacity=0][fill=uuuuuu,fill opacity=0.3 ]   (164.46,195.64) -- (225.67,195.64) -- (145.55,140.86) -- cycle ;
\draw [draw opacity=0][fill=uuuuuu,fill opacity=0.3 ]   (225.67,195.64) -- (164.46,195.64) -- (244.58,140.86) ;
\draw  [fill={rgb, 255:red, 0; green, 0; blue, 0 }  ,fill opacity=1 ] (191.57,107) .. controls (191.57,105.07) and (193.13,103.5) .. (195.07,103.5) .. controls (197,103.5) and (198.57,105.07) .. (198.57,107) .. controls (198.57,108.93) and (197,110.5) .. (195.07,110.5) .. controls (193.13,110.5) and (191.57,108.93) .. (191.57,107) -- cycle ;
\draw  [fill={rgb, 255:red, 0; green, 0; blue, 0 }  ,fill opacity=1 ] (241.08,140.86) .. controls (241.08,138.93) and (242.65,137.36) .. (244.58,137.36) .. controls (246.52,137.36) and (248.08,138.93) .. (248.08,140.86) .. controls (248.08,142.79) and (246.52,144.36) .. (244.58,144.36) .. controls (242.65,144.36) and (241.08,142.79) .. (241.08,140.86) -- cycle ;
\draw  [fill={rgb, 255:red, 0; green, 0; blue, 0 }  ,fill opacity=1 ] (222.17,195.64) .. controls (222.17,193.71) and (223.74,192.14) .. (225.67,192.14) .. controls (227.6,192.14) and (229.17,193.71) .. (229.17,195.64) .. controls (229.17,197.57) and (227.6,199.14) .. (225.67,199.14) .. controls (223.74,199.14) and (222.17,197.57) .. (222.17,195.64) -- cycle ;
\draw  [fill={rgb, 255:red, 0; green, 0; blue, 0 }  ,fill opacity=1 ] (142.05,140.86) .. controls (142.05,138.93) and (143.62,137.36) .. (145.55,137.36) .. controls (147.48,137.36) and (149.05,138.93) .. (149.05,140.86) .. controls (149.05,142.79) and (147.48,144.36) .. (145.55,144.36) .. controls (143.62,144.36) and (142.05,142.79) .. (142.05,140.86) -- cycle ;
\draw  [fill={rgb, 255:red, 0; green, 0; blue, 0 }  ,fill opacity=1 ] (160.96,195.64) .. controls (160.96,193.71) and (162.53,192.14) .. (164.46,192.14) .. controls (166.4,192.14) and (167.96,193.71) .. (167.96,195.64) .. controls (167.96,197.57) and (166.4,199.14) .. (164.46,199.14) .. controls (162.53,199.14) and (160.96,197.57) .. (160.96,195.64) -- cycle ;
\draw  [fill={rgb, 255:red, 0; green, 0; blue, 0 }  ,fill opacity=1 ] (420,89.5) .. controls (420,87.57) and (421.57,86) .. (423.5,86) .. controls (425.43,86) and (427,87.57) .. (427,89.5) .. controls (427,91.43) and (425.43,93) .. (423.5,93) .. controls (421.57,93) and (420,91.43) .. (420,89.5) -- cycle ;
\draw  [color={rgb, 255:red, 208; green, 2; blue, 27 }  ,draw opacity=1 ][fill={rgb, 255:red, 208; green, 2; blue, 27 }  ,fill opacity=1 ] (365,122.5) .. controls (365,120.57) and (366.57,119) .. (368.5,119) .. controls (370.43,119) and (372,120.57) .. (372,122.5) .. controls (372,124.43) and (370.43,126) .. (368.5,126) .. controls (366.57,126) and (365,124.43) .. (365,122.5) -- cycle ;
\draw  [color={rgb, 255:red, 248; green, 231; blue, 28 }  ,draw opacity=1 ][fill={rgb, 255:red, 248; green, 231; blue, 28 }  ,fill opacity=1 ] (475,122.5) .. controls (475,120.57) and (476.57,119) .. (478.5,119) .. controls (480.43,119) and (482,120.57) .. (482,122.5) .. controls (482,124.43) and (480.43,126) .. (478.5,126) .. controls (476.57,126) and (475,124.43) .. (475,122.5) -- cycle ;
\draw  [fill={rgb, 255:red, 0; green, 0; blue, 0 }  ,fill opacity=1 ] (390,168.5) .. controls (390,166.57) and (391.57,165) .. (393.5,165) .. controls (395.43,165) and (397,166.57) .. (397,168.5) .. controls (397,170.43) and (395.43,172) .. (393.5,172) .. controls (391.57,172) and (390,170.43) .. (390,168.5) -- cycle ;
\draw  [fill={rgb, 255:red, 0; green, 0; blue, 0 }  ,fill opacity=1 ] (449,168.5) .. controls (449,166.57) and (450.57,165) .. (452.5,165) .. controls (454.43,165) and (456,166.57) .. (456,168.5) .. controls (456,170.43) and (454.43,172) .. (452.5,172) .. controls (450.57,172) and (449,170.43) .. (449,168.5) -- cycle ;
\draw  [fill={rgb, 255:red, 0; green, 0; blue, 0 }  ,fill opacity=1 ] (390,213.5) .. controls (390,211.57) and (391.57,210) .. (393.5,210) .. controls (395.43,210) and (397,211.57) .. (397,213.5) .. controls (397,215.43) and (395.43,217) .. (393.5,217) .. controls (391.57,217) and (390,215.43) .. (390,213.5) -- cycle ;
\draw  [fill={rgb, 255:red, 0; green, 0; blue, 0 }  ,fill opacity=1 ] (449,213.5) .. controls (449,211.57) and (450.57,210) .. (452.5,210) .. controls (454.43,210) and (456,211.57) .. (456,213.5) .. controls (456,215.43) and (454.43,217) .. (452.5,217) .. controls (450.57,217) and (449,215.43) .. (449,213.5) -- cycle ;
\draw   (298,158) -- (312.19,158) -- (312.19,155) -- (321.66,161) -- (312.19,167) -- (312.19,164) -- (298,164) -- cycle ;

\draw (430,75) node [anchor=north west][inner sep=0.75pt]   [align=left] {2};
\draw (485,115) node [anchor=north west][inner sep=0.75pt]   [align=left] {3};
\draw (350,115) node [anchor=north west][inner sep=0.75pt]   [align=left] {1};
\draw (375,162) node [anchor=north west][inner sep=0.75pt]   [align=left] {5};
\draw (460,162) node [anchor=north west][inner sep=0.75pt]   [align=left] {4};
\draw (375,205) node [anchor=north west][inner sep=0.75pt]   [align=left] {$5'$};
\draw (460,205) node [anchor=north west][inner sep=0.75pt]   [align=left] {$4'$};
\draw (190,86) node [anchor=north west][inner sep=0.75pt]   [align=left] {2};
\draw (250,132) node [anchor=north west][inner sep=0.75pt]   [align=left] {3};
\draw (130,132) node [anchor=north west][inner sep=0.75pt]   [align=left] {1};
\draw (231,198) node [anchor=north west][inner sep=0.75pt]   [align=left] {4};
\draw (150,198) node [anchor=north west][inner sep=0.75pt]   [align=left] {5};
\end{tikzpicture}
\caption{$C_{5}^3$ and $C_{5}^3\boxplus \{4,5\}$.}
\end{figure}
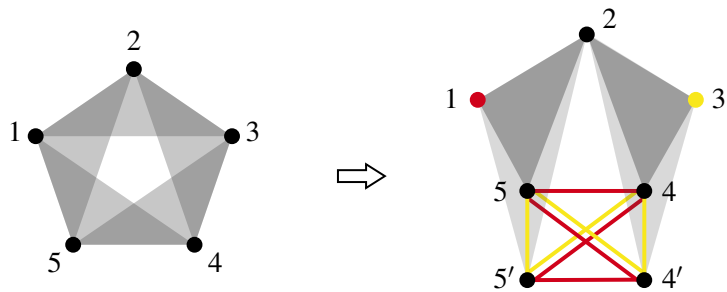

Using Propositions~\ref{PROP:nonminal-finite-family} and~\ref{PROP:nonminimal-corss-blowup} and some further argument
we obtain the following theorem.

\begin{theorem}\label{THM:nonminimal-general-turan-stability}
Let $\mathcal{G}$ be a minimal $3$-graph.
Suppose that $\{v_1,v_2\}\subset V(\mathcal{G})$ is symmetric in $\mathcal{G}$ and $d(v_1,v_2)\ge 1$.
Then there exists a finite family $\mathcal{F}$ of $3$-graphs with $\xi(\mathcal{F}) = \infty$
such that $\pi(\mathcal{F}) = 6 \lambda(\mathcal{G})$,
and for every $n\in\mathbb{N}$ there exists a maximum $\mathcal{F}$-free $3$-graph
on $n$-vertices which is a blowup of $\mathcal{G}$.
\end{theorem}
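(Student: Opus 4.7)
The plan is to let $\mathcal{H} = \mathcal{G} \boxplus \{v_1, v_2\}$ and then invoke Proposition \ref{PROP:nonminal-finite-family} applied to $\mathcal{H}$. The first step is to verify that $\mathcal{H}$ is nonminimal and that $Z(\mathcal{H})$ contains a nontrivial line segment. Since $\mathcal{G}$ is minimal and $d(v_1, v_2) \geq 1$, any $\vec{x} \in Z(\mathcal{G})$ must have $x_{v_1} + x_{v_2} > 0$; otherwise $\vec{x}$ restricts to an optimizer of $\mathcal{G} - v_1$, contradicting minimality. Hence Proposition \ref{PROP:nonminimal-corss-blowup}(c) applies and yields $\lambda(\mathcal{H}) = \lambda(\mathcal{G})$ together with a nontrivial line segment $L(\vec{y}, \vec{z}) \subset Z(\mathcal{H})$ whose endpoints, per Proposition \ref{PROP:multilinear-polynomial}, are supported on $\{v_1, v_2'\}$ and $\{v_1', v_2\}$ respectively (with weights on the remaining vertices unchanged). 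Since $\mathcal{G}$ is an induced subgraph of $\mathcal{H}$ by Proposition \ref{PROP:nonminimal-corss-blowup}(a) and shares its Lagrangian, deleting the clone $v_1'$ from $\mathcal{H}$ does not decrease the Lagrangian, so $\mathcal{H}$ is nonminimal.

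Next, apply Proposition \ref{PROP:nonminal-finite-family} to $\mathcal{H}$ to obtain a finite family $\mathcal{F}$ for which, for every $n$, some maximum $\mathcal{F}$-free $3$-graph on $n$ vertices is a blowup of $\mathcal{H}$. Since such blowups have density tending to $6\lambda(\mathcal{H}) = 6\lambda(\mathcal{G})$, this gives $\pi(\mathcal{F}) = 6\lambda(\mathcal{G})$. To produce a maximum $\mathcal{F}$-free construction that is moreover a blowup of $\mathcal{G}$, use the endpoint $\vec{y}$ of $L$: it assigns zero weight to $v_1'$ and to $v_2$, so any $\mathcal{H}$-blowup with proportions $\vec{y}$ is, via the clone identification $v_2' \sim v_2$, isomorphic to a blowup of $\mathcal{G}$. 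Rounding $\vec{y}$ to integer part sizes for any given $n$ provides the desired blowup of $\mathcal{G}$ which is a maximum $\mathcal{F}$-free construction.

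For the statement $\xi(\mathcal{F}) = \infty$, fix an arbitrary integer $t \geq 1$ and choose $t + 1$ points $\vec{w}_1, \ldots, \vec{w}_{t+1}$ spaced along $L$ so that their coordinates on $\{v_1, v_1', v_2, v_2'\}$ are pairwise separated by $\Omega_t(1)$. For $n$ large enough, the corresponding integer blowups of $\mathcal{H}$ are all maximum $\mathcal{F}$-free and have pairwise edit-distance $\Omega_t(n^3)$, because altering the split between $v_1$ and $v_1'$ by $\Omega_t(n)$ vertices changes the number of edges incident to the relevant parts by $\Omega_t(n^3)$. By pigeonhole, no collection of $t$ templates can simultaneously approximate all $t + 1$ of these constructions within $o_t(n^3)$ edits, so $\mathcal{F}$ is not $t$-stable; since $t$ was arbitrary, $\xi(\mathcal{F}) = \infty$.

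The principal obstacle is the last step: verifying that distinct interior points of $L$ yield hypergraphs at $\Omega(n^3)$ edit-distance and that these are not identified under isomorphism as the proportions shift. This reduces to tracking the asymmetry between the ``crossed'' $u_k$-edges $\{u_k v_1 v_1', u_k v_1 v_2', u_k v_2 v_1', u_k v_2 v_2'\}$ and the ``parallel'' $u_i$-edges for $i<k$ in the definition of the crossed blowup; the crossed edges break the obvious interchange symmetry and force distinct splits of each clone pair to produce truly distinct edge counts, which can be certified directly by expanding the polynomial $\hat{p}$ of Proposition \ref{PROP:multilinear-polynomial} at the relevant weight vectors.
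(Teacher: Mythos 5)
Your overall strategy matches the paper's: cross-blow $\{v_1,v_2\}$ to produce a nonminimal $3$-graph whose zero set contains a segment, invoke Proposition~\ref{PROP:nonminal-finite-family} to produce the family $\mathcal{F}$, and exploit the segment to defeat $t$-stability for every $t$. However, two genuine gaps remain.

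First, the theorem only assumes $d_{\mathcal{G}}(v_1,v_2)\ge 1$, but the crossed blowup $\mathcal{G}\boxplus\{v_1,v_2\}$ is defined only when the codegree is at least $2$, so your opening plan ``let $\mathcal{H}=\mathcal{G}\boxplus\{v_1,v_2\}$'' simply does not apply when $d_{\mathcal{G}}(v_1,v_2)=1$. The paper preprocesses by duplicating the unique common neighbour of $v_1,v_2$ to form a $3$-graph $\mathcal{G}^2$ on which $\{v_1,v_2\}$ has codegree $2$, is still symmetric, and has $\lambda(\mathcal{G}^2)=\lambda(\mathcal{G})$, and only then applies the crossed blowup; it also checks separately that some optimizer of $\mathcal{G}^2$ has all coordinates positive (this does not follow immediately from minimality of $\mathcal{G}$, since $\mathcal{G}^2$ itself is typically not minimal).

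Second, the edit-distance step you flag as the ``principal obstacle'' is precisely where the proof must do real work, and the tool you propose cannot close the gap. Expanding $\hat p$ at points of $L$ gives the \emph{edge} density, which is constant on $L$ by construction (that is what $L\subset Z(\mathcal{H})$ means), so no comparison of values of $\hat p$ can possibly separate the constructions; your claim that the crossed edges ``force distinct splits \ldots to produce truly distinct edge counts'' is false. The invariant that actually separates the constructions is the \emph{shadow} count: as the split parameter $\alpha$ moves along $L$, the quantity $|\partial\widehat{\mathcal{G}}_{\alpha}(n)|$ picks up an $\alpha(1-\alpha)$ term from the pairs inside $V_1\cup V_1'$ and $V_2\cup V_2'$, and hence varies by $\Theta(n^2)$ across $\alpha\in[0,1/2]$ even though $|\widehat{\mathcal{G}}_{\alpha}(n)|$ stays essentially constant. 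Since every present shadow pair of $\widehat{\mathcal{G}}_{\alpha}(n)$ has codegree $\Omega(n)$, a shadow-size gap of $\Omega(n^2)$ forces an edit distance of $\Omega(n^3)$, which is what the pigeonhole against a hypothetical $t$-template list requires. Finding that the shadow size, not the edge count, is the right isomorphism-invariant lower-bounding edit distance is the nontrivial idea you are missing. A smaller point: for finite $n$ the rounded blowups need only be $(1-\delta)$-near-extremal, and one should run the argument against the $\delta$-slack in the $t$-stability definition rather than assert they are all exactly maximum.
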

\subsection{Feasible region}\label{SUBSEC:feasible-region}
Theorem~\ref{THM:main-sec1.1} has some interesting application to the so-called feasible region of hypergraphs.

Given an $r$-graph $\mathcal{H}$ the \emph{shadow} of $\mathcal{H}$ is defined as
\begin{align}
\partial\mathcal{H}
=
\left\{A\in \binom{V(\mathcal{H})}{r-1}\colon \text{there is } B\in \mathcal{H} \text{ such that }
	A\subseteq B\right\}. \notag
\end{align}
The \emph{edge density} of $\mathcal{H}$ is defined as $\rho(\mathcal{H})= |\mathcal{H}|/\binom{v(\mathcal{H})}{r}$, and
the \emph{shadow density} of $\mathcal{H}$ is defined as $\rho(\partial\mathcal{H})= |\partial\mathcal{H}|/\binom{v(\mathcal{H})}{r-1}$.

For a family $\mathcal{F}$ the \emph{feasible region} $\Omega(\mathcal{F})$ of $\mathcal{F}$
is the set of points $(x,y)\in [0,1]^2$ such that there exists a sequence of $\mathcal{F}$-free $r$-graphs
$\left( \mathcal{H}_{k}\right)_{k=1}^{\infty}$ with
$$\lim_{k \to \infty}v(\mathcal{H}_{k}) = \infty,\quad
\lim_{k \to \infty}\rho(\partial\mathcal{H}_{k}) = x, \quad\text{and}\quad \lim_{k \to \infty}\rho(\mathcal{H}_{k}) = y.$$
The feasible region unifies and generalizes many classical problems
such as the  Kruskal--Katona theorem~\cite{KR63,KA68} and the Tur\'{a}n problem.
It was introduced in \cite{LM1} to understand the extremal properties of $\mathcal{F}$-free hypergraphs
beyond just the determination of $\pi(\mathcal{F})$. The general shape of $\Omega(\mathcal{F})$ was analyzed
in~\cite{LM1} as follows: For some constant $c(\mathcal{F})\in [0, 1]$ the projection to the first
coordinate,
\[
	{\rm proj}\Omega(\mathcal{F})
	=
	\left\{ x \colon  \text{there is $y \in [0,1]$ such that $(x,y) \in \Omega(\mathcal{F})$} \right\},
\]
is the interval $[0, c(\mathcal{F})]$ .
Moreover, there is a left-continuous almost everywhere differentiable function
$g(\mathcal{F})\colon {\rm proj}\Omega(\mathcal{F}) \to [0,1]$ such that
\[
	\Omega(\mathcal{F})
	=
	\bigl\{(x, y)\in [0, c(\mathcal{F})]\times [0, 1]\colon 0\le y\le g(\mathcal{F})(x)\bigr\}\,.
\]
Let us call $g(\mathcal{F})$ the \emph{feasible region function} of $\mathcal{F}$.
It was shown in~\cite{LM1} that $g(\mathcal{F})$ is not necessarily continuous,
and in~\cite{Liu20a}, it was shown that $g(\mathcal{F})$ can
have infinitely many local maxima even for some simple and natural families $\mathcal{F}$.

The stability number of $\mathcal{F}$ can give information about the number of global maxima of $g(\mathcal{F})$ (e.g. see~\cite{LMR1}).
The family $\mathcal{M}$ of triple systems from~\cite{LM22} for which $\xi(\mathcal{M})=2$ has the following
additional property: not only are the two near extremal constructions for $\mathcal{M}$ far from each
other in edit-distance, but the same is true of their shadows.
As a consequence, in addition to $\xi(\mathcal{M}) = 2$, the function~$g(\mathcal{M})$ has exactly two global maxima.
Similarly, it was proved in~\cite{LMR1} that for the $t$-stable family $\mathcal{M}_t$
the function $g(\mathcal{M}_t)$ has exactly $t$ global maxima for every positive integer $t$.

It was left as an open question in~\cite{LMR1}
whether there exists a finite family $\mathcal{F}$
so that the function $g(\mathcal{F})$ has infinitely many global maxima.
Here we would like to remind the reader that even though there are
infinitely many extremal constructions for Tur\'{a}n's tetrahedron conjecture (if it is true),
the shadow of all these constructions is complete.
Hence, solving Tur\'{a}n's tetrahedron conjecture will not answer the question asked in~\cite{LMR1}.

Our next result shows that the same family $\mathcal{F}_t$ as in Theorem~\ref{THM:main-sec1.1}
has the property that $g(\mathcal{F}_t)$ attains its maximum on a nontrivial interval,
thus giving a positive answer to the question in~\cite{LMR1}.

\begin{theorem}\label{THM:feasibe-region}
For every integer $t\ge 3$ we have $\mathrm{proj}\Omega(\mathcal{F}_t) = \left[0, \frac{t+1}{t+2}\right]$,
and $g(\mathcal{F}_t, x)\leq {(t-2)(t-1)}/{t^2}$ for all $x\in \mathrm{proj}\Omega(\mathcal{F}_t)$.
Moreover, if $t\ge 4$, then $g(\mathcal{F}_t, x)={(t-2)(t-1)}/{t^2}$
iff $x\in \left[\frac{t-1}{t}, \frac{t-1}{t}+\frac{1}{t^2}\right]$.
\end{theorem}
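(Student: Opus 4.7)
The plan is to combine a shadow-density computation on the extremal line segment from Proposition~\ref{PROP:multilinear-polynomial} with a multi-stability argument that pins down the weight vectors of near-extremal $\mathcal{F}_t$-free $3$-graphs. The upper bound $g(\mathcal{F}_t,x) \le (t-2)(t-1)/t^2$ is immediate from Theorem~\ref{THM:main-sec1.1}(a), since every $(x,y) \in \Omega(\mathcal{F}_t)$ satisfies $y \le \pi(\mathcal{F}_t) = (t-2)(t-1)/t^2$. The inclusion $\mathrm{proj}\Omega(\mathcal{F}_t) \supseteq [0,(t+1)/(t+2)]$ is obtained by varying the weights of blowups of $\Gamma_t$: the map $\vec{w} \mapsto 1 - \sum_i w_i^2$ is continuous on $\Delta_{t+1}$ and sweeps out $[0,(t+1)/(t+2)]$, with the maximum attained by the balanced blowup $w_i \equiv 1/(t+2)$. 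The reverse inclusion requires that every $\mathcal{F}_t$-free $3$-graph admits, up to $o(n^2)$ pairs, a partition of its vertex set into $t+2$ parts with no intra-part codegree; this follows from the structural machinery underlying $\mathcal{F}_t$ as in Proposition~\ref{PROP:nonminal-finite-family} and~\cite{LMR2}.

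The core is the computation on the extremal blowups. These are blowups of $\Gamma_t = K_t^3 \boxplus \{v_1,v_2\}$ whose weight vectors traverse the line segment $L(\vec{y},\vec{z}) \subset Z(\Gamma_t)$ from Proposition~\ref{PROP:multilinear-polynomial} applied to the unique maximizer $(1/t,\ldots,1/t) \in Z(K_t^3)$. Parametrizing by $\alpha \in [0,1]$ gives $w_{u_k} = 1/t$ for $k \in [t-2]$ together with $w_{v_1} = w_{v_2'} = \alpha/t$ and $w_{v_1'} = w_{v_2} = (1-\alpha)/t$. For $t \ge 4$ every pair of vertices of $\Gamma_t$ has positive codegree (the only delicate ones being $\{v_1,v_1'\}$ and $\{v_2,v_2'\}$, each of codegree $1$ supplied by $u_{t-2}$), so the shadow of the blowup contains every cross-part pair and its limiting density is
\begin{align}
1 - \sum_i w_i^2 = \frac{t-1}{t} + \frac{4\alpha(1-\alpha)}{t^2}. \notag
\end{align}
As $\alpha$ ranges over $[0,1]$, this expression sweeps exactly $\left[\frac{t-1}{t},\frac{t-1}{t}+\frac{1}{t^2}\right]$, giving the $\Leftarrow$ direction of the final claim.

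For the $\Rightarrow$ direction, take $\mathcal{F}_t$-free $\mathcal{H}_k$ with $\rho(\mathcal{H}_k) \to (t-2)(t-1)/t^2$ and $\rho(\partial \mathcal{H}_k) \to x$. A multi-stability statement for $\mathcal{F}_t$, strengthening the finite-$t$ results of~\cite{LMR1,LMR2}, places each $\mathcal{H}_k$ within $o(n^3)$ edges of a blowup of $\Gamma_t$ whose normalized part sizes lie in $Z(\Gamma_t)$. Writing $p_{\Gamma_t} = p_1 + p_2 T + p_4 A + p_5 B$ as in Proposition~\ref{PROP:multilinear-polynomial} with $p_4,p_5 > 0$, the first-order Lagrange conditions at any maximizer force $x_{v_1} = x_{v_2'}$ and $x_{v_1'} = x_{v_2}$, while the symmetry among the $u_i$'s pins those coordinates to $1/t$; thus the weight vector lies on $L(\vec{y},\vec{z})$, and by the computation above $x$ must belong to $[(t-1)/t,(t-1)/t+1/t^2]$. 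The main obstacle is precisely pushing this stability through despite $\xi(\mathcal{F}_t) = \infty$: there is an uncountable continuum of extremal constructions, and the argument must simultaneously control both edge and shadow densities of the sequence $(\mathcal{H}_k)$ so that the limiting weight vector is confined to the one-dimensional segment rather than to an a priori larger portion of $Z(\Gamma_t)$.
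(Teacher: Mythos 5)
Your outline mirrors the paper's strategy in broad strokes, and your $\Leftarrow$ computation --- that $\Gamma_t$ is $2$-covered for $t \ge 4$ so blowup shadows are complete $(t+2)$-partite, and that $1 - \sum_i w_i^2 = \frac{t-1}{t} + \frac{4\alpha(1-\alpha)}{t^2}$ sweeps $\left[\frac{t-1}{t}, \frac{t-1}{t} + \frac{1}{t^2}\right]$ as $\alpha$ runs over $[0,1]$ --- is correct and essentially the paper's. (Incidentally, you correctly place the codegree-$1$ pairs at $\{v_1,v_1'\}$ and $\{v_2,v_2'\}$; Observation~\ref{OBS:structure-link}(c) swaps these with $\{v_1,v_2\}$ and $\{v_1',v_2'\}$, which appears to be a typo relative to the crossed-blowup definition.)

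The gap is in the $\Rightarrow$ direction, precisely where you flag ``the main obstacle.'' The paper closes it with three concrete ingredients your proposal neither supplies nor replaces. First, Corollary~\ref{CORO:stability} (a consequence of Theorem~\ref{THM:main-sec1.1}(d), i.e.\ the degree-stability machinery of Section~\ref{SEC:proof-stability}) shows that a near-extremal $\mathcal{F}_t$-free $\mathcal{H}_k$ becomes $\Gamma_t$-colorable after deleting $O(\epsilon^{1/2}n)$ vertices; your appeal to a ``multi-stability statement'' invokes but does not establish such a result, and first-order Lagrange conditions on $p_{\Gamma_t}$ alone only describe $Z(\Gamma_t)$ and say nothing about the structure of $\mathcal{H}_k$. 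Second, Lemma~\ref{LEMMA:Lagrangian-stability} is a quantitative weight-stability result (via Maclaurin-type inequalities, not stationarity) that transports the exact shadow computation from $Z(\Gamma_t)$ to a sequence whose densities merely approach the extremal value. Third --- and this your sketch omits entirely --- the deleted vertices $Z_k$ contribute to $\partial\mathcal{H}_k$: the paper bounds this contribution via Claim~\ref{CLAIM:link-empty-inside}, which shows every vertex's link avoids $\binom{V_i}{2}$ for each blowup part $V_i$ (crucially, even for the low-degree deleted vertices), together with the double count in Lemma~\ref{LEMMA:feasible-region-shadow} accounting for edges of the blowup absent from $\mathcal{H}'_k$. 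Without these pieces the upper bound $x \le \frac{t-1}{t}+\frac{1}{t^2}$ is not established. Finally, your route to $\mathrm{proj}\Omega(\mathcal{F}_t) \subseteq [0,(t+1)/(t+2)]$ via approximate partitions is both unproved and unnecessary: by Lemma~\ref{LEMMA:F-free-shadow-Km-free}, $\partial\mathcal{H}$ is $K_{t+3}$-free for every $\mathcal{F}_t$-free $\mathcal{H}$, and Tur\'an's theorem finishes it directly.
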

\noindent\textbf{Remark.}
Fox (see~\cite{PI14}) asked whether there exists a finite family $\mathcal{F}$ of $r$-graphs for some $r\ge 3$
such that the Tur\'{a}n density $\pi(\mathcal{F})$ is a transcendental number.
Though we did not answer his question, Theorem~\ref{THM:feasibe-region} does imply that there exists
a sequence of (near) extremal $\mathcal{F}_t$-free $3$-graphs whose shadow densities approach a transcendental number
(because the interval $\left[\frac{t-1}{t}, \frac{t-1}{t}+\frac{1}{t^2}\right]$ contains a transcendental number).

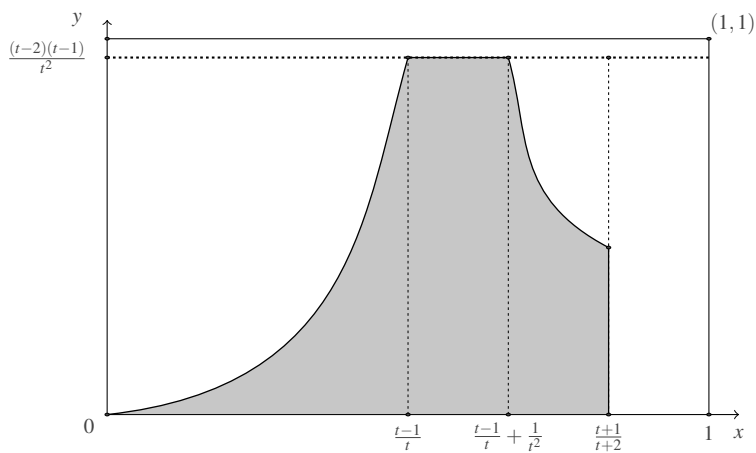
\begin{figure}[htbp]
\centering
\begin{tikzpicture}[xscale=8,yscale=5]
\draw [->] (0,0)--(1+0.05,0);
\draw [->] (0,0)--(0,1+0.05);
\draw (0,1)--(1,1);
\draw (1,0)--(1,1);
\draw [line width=0.8pt,dash pattern=on 1pt off 1.2pt,domain=0:1] plot(\x,1-0.05);
\draw [line width=0.4pt,dash pattern=on 1pt off 1.2pt] (1/2,0) -- (1/2,1-0.05);
\draw [line width=0.4pt,dash pattern=on 1pt off 1.2pt] (2/3,0) -- (2/3,1-0.05);
\draw [line width=0.4pt,dash pattern=on 1pt off 1.2pt] (5/6,0) -- (5/6,1-0.05);

\draw[fill=sqsqsq,fill opacity=0.25, line width=0.5pt]
(0,0)  to [out = 10, in = 260] (1/2,1-0.05) to (2/3, 1-0.05)
to [out = 280, in = 140] (5/6,4/9) to (5/6,0);

\begin{scriptsize}
\draw [fill=uuuuuu] (1,0) circle (0.1pt);
\draw[color=uuuuuu] (1,0-0.05) node {$1$};
\draw [fill=uuuuuu] (0,0) circle (0.1pt);
\draw[color=uuuuuu] (0-0.03,0-0.03) node {$0$};
\draw [fill=uuuuuu] (0,1) circle (0.1pt);
\draw[color=uuuuuu] (0-0.05,1+0.05) node {$y$};
\draw[color=uuuuuu] (1+0.05,0-0.05) node {$x$};
\draw [fill=uuuuuu] (0,1-0.05) circle (0.1pt);
\draw[color=uuuuuu] (0-0.1,1-0.05) node {$\frac{(t-2)(t-1)}{t^2}$};

\draw [fill=uuuuuu] (1/2,1-0.05) circle (0.1pt);
\draw [fill=uuuuuu] (1/2,0) circle (0.1pt);
\draw[color=uuuuuu] (1/2,0-0.06) node {$\frac{t-1}{t}$};

\draw [fill=uuuuuu] (2/3,1-0.05) circle (0.1pt);
\draw [fill=uuuuuu] (2/3,0) circle (0.1pt);
\draw[color=uuuuuu] (2/3,0-0.06) node {$\frac{t-1}{t}+\frac{1}{t^2}$};

\draw [fill=uuuuuu] (5/6,1-0.05) circle (0.1pt);
\draw [fill=uuuuuu] (5/6,0) circle (0.1pt);
\draw[color=uuuuuu] (5/6,0-0.06) node {$\frac{t+1}{t+2}$};

\draw [fill=uuuuuu] (1,1) circle (0.1pt);
\draw[color=uuuuuu] (1+0.04,1+0.04) node {$(1,1)$};

\draw [fill=uuuuuu] (5/6,4/9) circle (0.1pt);
\end{scriptsize}
\end{tikzpicture}
\caption{The function $g(\mathcal{F}_t)$ attains its maximum on the interval $\left[\frac{t-1}{t}, \frac{t-1}{t}+\frac{1}{t^2}\right]$.}
\end{figure}

The remainder of this paper is organized as follows.
In Section~\ref{SEC:Preliminaries} we introduce some preliminary definitions and results.
In Section~\ref{SEC:Proof-multilinear-polynomials} we prove Proposition~\ref{PROP:multilinear-polynomial}.
In Section~\ref{SEC:Proof-nonminimal-hypergraphs} we prove Propositions~\ref{PROP:nonminal-finite-family} and~\ref{PROP:nonminimal-corss-blowup} and Theorem~\ref{THM:nonminimal-general-turan-stability}.
In Section~\ref{SEC:proof-turan-number} we prove Theorem~\ref{THM:main-sec1.1}~(a),~(b), and~(c).
In Section~\ref{SEC:proof-stability} we prove Theorem~\ref{THM:main-sec1.1}~(d).
In Section~\ref{SEC:proof-feasible-region} we prove Theorem~\ref{THM:feasibe-region}.
The final section contains some concluding remarks.

\section{Preliminaries}\label{SEC:Preliminaries}
We present some definitions and useful results in this section.

Recall that the shadow of an $r$-graph $\mathcal{H}$ is
\begin{align}
\partial\mathcal{H}
=
\left\{A\in \binom{V(\mathcal{H})}{r-1}\colon \text{there is } B\in \mathcal{H} \text{ such that }
	A\subseteq B\right\}. \notag
\end{align}
By setting $\partial_1\mathcal{H} = \partial\mathcal{H}$
for every $i \in [r-1]$ we define $\partial_{i}\mathcal{H}$ inductively
as $\partial_{i}\mathcal{H} = \partial \partial_{i-1}\mathcal{H}$.
In particular, $\partial_{r-2}\mathcal{H}$ is a graph.

Given an $r$-graph $\mathcal{H}$ and a vertex $v\in V(\mathcal{H})$
the \emph{neighborhood} of $v$ is defined as
\[
N_{\mathcal{H}}(v)
= \left\{u\in V(\mathcal{H})\setminus\{v\}\colon
     \exists E\in\mathcal{H}\text{ such that }\{u,v\}\subset E\right\}.
\]
Recall that the \emph{link} of $v$ in $\mathcal{H}$ is
$L_\mathcal{H}(v)
=\left\{e\in \partial\mathcal{H}\colon
     e\cup \{v\}\in \mathcal{H}\right\}.$
The \emph{degree} of $v$ is $d_{\mathcal{H}}(v)=|L_\mathcal{H}(v)|$.
Denote by $\delta(\mathcal{H})$ and $\Delta(\mathcal{H})$
the minimum degree and maximum degree of $\mathcal{H}$, respectively.
Recall that the neighborhood of $\{u,v\}\subset V(\mathcal{H})$ in $\mathcal{H}$ is
$N_{\mathcal{H}}(u,v)
=\left\{w\in V(\mathcal{H})\colon \{u,v,w\}\in \mathcal{H}\right\}.$
The codegree of  $\{u,v\}$ is $d_{\mathcal{H}}(u,v) = |N_{\mathcal{H}}(u,v)|$.
Denote by $\delta_2(\mathcal{H})$ and $\Delta_2(\mathcal{H})$
the minimum degree and maximum codegree of $\mathcal{H}$, respectively.
We will omit the subscript $\mathcal{H}$ from our notations if it is clear from the context.

Let $\mathcal{G}$ be an $r$-graph with the vertex set $[m]$, and $V_1,\ldots, V_m$ be $m$ disjoint sets
(each $V_i$ is allowed to be empty).
The blowup $\mathcal{G}[V_1,\ldots, V_m]$ of $\mathcal{G}$ is obtained from $\mathcal{G}$
by replacing each vertex $i$ with the set $V_i$ and replacing each edge $i_1\cdots i_r$
with the complete $r$-partite $r$-graph with parts $V_{i_1},\ldots,V_{i_r}$.

For a hypergraph $\mathcal{G}$ the maximum number of edges in a blowup of $\mathcal{G}$
is related to $\lambda(\mathcal{G})$
(e.g. see Frankl and F\"uredi~\cite{FF89} or Keevash's survey~\cite[{Section~3}]{KE11}).

\begin{lemma}[\cite{FR84,KE11}]\label{LEMMA:|H|<=lambda-T-n^r}
Suppose that $\mathcal{G}$ is an $r$-graph and $V_1,\ldots, V_m$ are $m$ disjoint sets with $\sum_{i\in[m]}|V_i| = n$.
Then $|\mathcal{G}[V_1,\ldots, V_m]| = p_{\mathcal{G}}\left(|V_1|/n,\ldots,|V_m|/n\right)n^r$.
In particular, $|\mathcal{G}[V_1,\ldots, V_m]|\le \lambda(G)n^r$.
\end{lemma}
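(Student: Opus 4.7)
The plan is to unpack the definition of the blowup and match it term-by-term with the polynomial $p_{\mathcal{G}}$. By construction, each edge $E = \{i_1,\ldots,i_r\} \in \mathcal{G}$ contributes to $\mathcal{G}[V_1,\ldots,V_m]$ the complete $r$-partite $r$-graph with parts $V_{i_1},\ldots,V_{i_r}$, which has exactly $\prod_{i\in E} |V_i|$ edges, and distinct edges of $\mathcal{G}$ contribute disjoint edge sets in the blowup (since their index sets are distinct and each blowup edge has exactly one vertex in each part it touches). Therefore
\begin{align}
|\mathcal{G}[V_1,\ldots,V_m]| = \sum_{E \in \mathcal{G}} \prod_{i \in E} |V_i|. \notag
\end{align}

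Setting $x_i = |V_i|/n$ for each $i \in [m]$, the constraint $\sum_i |V_i| = n$ translates to $(x_1,\ldots,x_m) \in \Delta_{m-1}$. Using $|V_i| = x_i n$ and the fact that each edge $E$ has exactly $r$ vertices, we can factor out $n^r$ to obtain
\begin{align}
|\mathcal{G}[V_1,\ldots,V_m]| = \sum_{E\in\mathcal{G}} \prod_{i\in E} (x_i n) = n^r \sum_{E\in\mathcal{G}} \prod_{i\in E} x_i = n^r \, p_{\mathcal{G}}(x_1,\ldots,x_m), \notag
\end{align}
which is exactly the first claim. The second claim is then immediate from the definition of $\lambda(\mathcal{G}) = \lambda(p_{\mathcal{G}}) = \max_{\Delta_{m-1}} p_{\mathcal{G}}$, since $(|V_1|/n,\ldots,|V_m|/n) \in \Delta_{m-1}$ implies $p_{\mathcal{G}}(|V_1|/n,\ldots,|V_m|/n) \le \lambda(\mathcal{G})$. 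There is no real obstacle here; the statement is essentially a direct translation between the combinatorial edge count of the blowup and the evaluation of the defining polynomial at a rational point of the simplex.
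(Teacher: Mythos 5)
Your proof is correct and is the standard, essentially only way to establish this fact; the paper cites Lemma~\ref{LEMMA:|H|<=lambda-T-n^r} from the literature rather than reproving it, so there is no alternative in-paper argument to compare against. Your term-by-term matching between blowup edges and monomials of $p_{\mathcal{G}}$, followed by the observation that $(|V_1|/n,\ldots,|V_m|/n)\in\Delta_{m-1}$, is exactly what the cited sources do.
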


Given an $r$-graph $F$ we say $\mathcal{H}$ is \emph{$F$-hom-free}
if there is no homomorphism from $F$ to $\mathcal{H}$.
This is equivalent to say that every blowup of $\mathcal{H}$ is $F$-free.
For a family $\mathcal{F}$ of $r$-graphs we say $\mathcal{H}$ is \emph{$\mathcal{F}$-hom-free}
if it is $F$-hom-free for all $F \in \mathcal{F}$.
An easy observation is that 
if an $r$-graph $F$ is $2$-covered, then $\mathcal{H}$ is $F$-free iff it is $F$-hom-free.

\begin{definition}[Blowup-invariance]
A family $\mathcal{F}$ of $r$-graphs is blowup-invariant if every
$\mathcal{F}$-free $r$-graph is also $\mathcal{F}$-hom-free.
\end{definition}

Let $\mathcal{H}$ be an $r$-graph and $\{u,v\}\subset V(\mathcal{H})$ be two non-adjacent vertices
(i.e., no edge contains both $u$ and $v$).
We say $u$ and $v$ are \emph{equivalent} if $L_{\mathcal{H}}(u)=L_{\mathcal{H}}(v)$ (in particular, two equivalent vertices are non-adjacent).
Otherwise we say they are \emph{non-equivalent}.
An equivalence class of $\mathcal{H}$ is a maximal vertex set in which every pair of vertices are equivalent.
We say $\mathcal{H}$ is \emph{symmetrized} if it does not contain non-equivalent pairs of vertices.
In~\cite{LMR2}, the authors summarized the well known method of Zykov~\cite{Zy} symmetrization for solving Tur\'{a}n problems
into the following statement.

\begin{theorem}[see e.g.~\cite{LMR2}]\label{THEOREM:ex(n,F)=max-blowup-of-H}
Suppose that $\mathcal{F}$ is a blowup-invariant family of $r$-graphs. If $\mathfrak{H}$ denotes the class of all symmetrized $\mathcal{F}$-free $r$-graphs, then ex$(n, \mathcal{F})=\mathfrak{h}(n)$  holds for every $n \in \mathbb{N}^{+}$,
where $\mathfrak{h}(n)=\max\{|\mathcal{H}|\colon\mathcal{H}\in \mathfrak{H} \text{ and } v(\mathcal{H})=n\}$.
\end{theorem}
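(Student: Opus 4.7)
My plan is to prove the two inequalities separately. The inequality $\mathfrak{h}(n)\le\mathrm{ex}(n,\mathcal{F})$ is immediate because every symmetrized $\mathcal{F}$-free $r$-graph on $n$ vertices is itself $\mathcal{F}$-free. For the reverse direction, I would start with any extremal $\mathcal{F}$-free $\mathcal{H}$ on $n$ vertices and iteratively apply a Zykov-type symmetrization operation, producing $\mathcal{F}$-free $r$-graphs whose edge counts never decrease and that eventually become symmetrized.

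The symmetrization step goes as follows. If $\mathcal{H}$ is not symmetrized, pick non-adjacent non-equivalent vertices $u,v$, ordered so that $d_{\mathcal{H}}(u)\ge d_{\mathcal{H}}(v)$. Form $\mathcal{H}^{uv}$ by deleting every edge of $\mathcal{H}$ containing $v$ and then adding the edge $\{v\}\cup e$ for each $e\in L_{\mathcal{H}}(u)$. Because $u,v$ are non-adjacent in $\mathcal{H}$, none of the sets $e$ involved contains $u$ or $v$, so the new $r$-sets are legitimate and no added edge contains both $u$ and $v$. A direct count gives $|\mathcal{H}^{uv}|=|\mathcal{H}|-d_{\mathcal{H}}(v)+d_{\mathcal{H}}(u)\ge|\mathcal{H}|$, and by construction $L_{\mathcal{H}^{uv}}(v)=L_{\mathcal{H}^{uv}}(u)=L_{\mathcal{H}}(u)$, so $u$ and $v$ become equivalent in $\mathcal{H}^{uv}$.

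The crucial step, and the only place where the hypothesis on $\mathcal{F}$ enters, is showing that $\mathcal{H}^{uv}$ is still $\mathcal{F}$-free. The vertex map $\phi\colon V(\mathcal{H}^{uv})\to V(\mathcal{H})$ that sends $v\mapsto u$ and fixes every other vertex is a homomorphism: each edge of $\mathcal{H}^{uv}$ not containing $v$ is already an edge of $\mathcal{H}$, while each new edge $\{v\}\cup e$ is sent to $\{u\}\cup e\in\mathcal{H}$ since $e\in L_{\mathcal{H}}(u)$. Hence $\mathcal{H}^{uv}$ is $\mathcal{H}$-colorable and embeds into some blowup of $\mathcal{H}$. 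Blowup-invariance then gives that $\mathcal{H}$ is $\mathcal{F}$-hom-free, so no member of $\mathcal{F}$ occurs in any blowup of $\mathcal{H}$; in particular $\mathcal{H}^{uv}$ is $\mathcal{F}$-free.

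The remaining issue, which I expect to be the main bookkeeping obstacle, is termination. I would secure it by selecting $\mathcal{H}$ at the outset from all extremal $\mathcal{F}$-free $n$-vertex $r$-graphs so as to minimize a suitable monovariant such as the number of equivalence classes under $L(\cdot)=L(\cdot)$. A routine calculation shows that for $w,w'\notin\{v\}$ the equality $L_{\mathcal{H}}(w)=L_{\mathcal{H}}(w')$ implies $L_{\mathcal{H}^{uv}}(w)=L_{\mathcal{H}^{uv}}(w')$, so equivalences among vertices other than $v$ survive each symmetrization step, while $v$ joins $u$'s class. Thus the number of equivalence classes weakly decreases under the symmetrization step, and iterating it (refined by a secondary lexicographic comparison of the sorted degree sequence if needed) contradicts the minimality of the chosen $\mathcal{H}$ unless $\mathcal{H}$ is already symmetrized. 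This yields $\mathrm{ex}(n,\mathcal{F})\le\mathfrak{h}(n)$, completing the proof.
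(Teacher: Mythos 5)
Your overall strategy — Zykov symmetrization, with blowup-invariance doing the work of preserving $\mathcal{F}$-freeness — is exactly the standard argument that the cited reference \cite{LMR2} encapsulates, and your treatment of the two substantive steps is correct: the edge-count identity $|\mathcal{H}^{uv}|=|\mathcal{H}|-d(v)+d(u)$ holds because the new edges all contain $v$ and are therefore disjoint from the surviving ones, and the map sending $v\mapsto u$ (and fixing everything else) is a genuine homomorphism $\mathcal{H}^{uv}\to\mathcal{H}$ because $u\notin e$ and $v\notin e$ for every $e\in L_{\mathcal{H}}(u)$, so blowup-invariance of $\mathcal{F}$ does indeed propagate $\mathcal{F}$-freeness to $\mathcal{H}^{uv}$.

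The gap is in the termination argument, which you flagged yourself but did not actually close. You minimize the number of equivalence classes among extremal $\mathcal{F}$-free $r$-graphs and claim this number weakly decreases under symmetrization, refined by the sorted degree sequence. Weak decrease is right, but strict improvement is not established: extremality already forces $d(u)=d(v)$, and if $v$'s class in $\mathcal{H}$ has more than one element, the class count stays put ($v$ leaves a class that remains nonempty and joins $u$'s). Your secondary tiebreak also fails: $d_{\mathcal{H}^{uv}}(w)=d_{\mathcal{H}}(w)-d_{\mathcal{H}}(v,w)+d_{\mathcal{H}}(u,w)$ for $w\neq u,v$, so if the links $L(u)$ and $L(v)$ merely have the same vertex-degree sequence (which is compatible with $L(u)\neq L(v)$), the multiset of degrees is unchanged and the lexicographic comparison gives nothing. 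A clean fix is to maximize, among extremal $\mathcal{F}$-free $r$-graphs, the quantity $\sum_i|C_i|^2$ over the equivalence classes $C_i$. Given non-adjacent non-equivalent $a,b$ (all of $C_a\times C_b$ is non-adjacent, since non-adjacency is a property of the link), pick $u$ from the class of not-smaller size. Then $d(u)=d(v)$ by extremality, and moving $v$ from $C_v$ to $C_u$ changes the potential by at least $2(|C_u|-|C_v|+1)\ge 2$, with any subsequent class merges only helping; this strictly increases the potential and gives the desired contradiction.
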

\section{Multilinear polynomials}\label{SEC:Proof-multilinear-polynomials}
We prove Proposition~\ref{PROP:multilinear-polynomial} in this section.
First, we need the following simple lemma.

\begin{lemma}\label{LEMMA:max-p-symmetric}
Suppose that $p(X_1,\ldots,X_m) = p_1 + p_2(X_i+X_j) + p_3X_iX_j$
is an $m$-variable multilinear polynomial that is symmetric with respect to $X_i$ and $X_j$,
and $p_3$ is nonnegative.
Then for every $(x_1, \ldots, x_m)\in Z(p)$ we have
$(x_1, \ldots,x_{i-1}, (x_i+x_j)/2, x_{i+1}, \ldots, x_{j-1}, (x_i+x_j)/2, x_{j+1}, \ldots, x_m)\in Z(p)$.
In particular,
$$\max\{p(y_1, \ldots, y_m)\colon (y_1, \ldots, y_m)\in \Delta_{m-1} \text{ and }y_i = y_j\} = \lambda(p).$$
\end{lemma}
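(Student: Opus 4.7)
The plan is to reduce everything to a one-variable AM--GM argument on the slice of $\Delta_{m-1}$ obtained by freezing all coordinates except $X_i$ and $X_j$.

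Fix $\vec{x}=(x_1,\ldots,x_m)\in Z(p)$, set $s=x_i+x_j$, and let
\[
\vec{x}'=(x_1,\ldots,x_{i-1},s/2,x_{i+1},\ldots,x_{j-1},s/2,x_{j+1},\ldots,x_m).
\]
Since $s\ge 0$, the vector $\vec{x}'$ lies in $\Delta_{m-1}$, so $p(\vec{x}')\le\lambda(p)$ automatically. Thus it suffices to show $p(\vec{x}')\ge p(\vec{x})=\lambda(p)$, and this will force $\vec{x}'\in Z(p)$.

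The key step is to exploit the structural decomposition $p=p_1+p_2(X_i+X_j)+p_3X_iX_j$, where $p_1,p_2,p_3$ do not involve $X_i$ or $X_j$. Substituting the coordinates $x_k$ ($k\ne i,j$) into $p_1,p_2,p_3$ yields constants $A,B,C\in\mathbb{R}$, so that
\[
p(\vec{x})=A+Bs+Cx_ix_j,\qquad p(\vec{x}')=A+Bs+Cs^2/4.
\]
I first need $C\ge 0$. This follows from the hypothesis that $p_3$ is nonnegative on $\Delta_{m-1}$: since $p_3$ does not depend on $X_i,X_j$, the value $C=p_3(\vec{x})$ equals $p_3$ evaluated at any extension of $(x_k)_{k\ne i,j}$ to a point of $\Delta_{m-1}$ (for instance $\vec{x}$ itself), hence $C\ge 0$. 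Then AM--GM gives $x_ix_j\le (x_i+x_j)^2/4=s^2/4$, and multiplying by $C\ge 0$ yields $p(\vec{x})\le p(\vec{x}')$, as required.

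The ``In particular'' statement follows at once: the inequality $\le\lambda(p)$ is trivial since we are maximizing over a subset of $\Delta_{m-1}$, while for the reverse inequality one picks any $\vec{x}\in Z(p)$ (which exists by compactness of $\Delta_{m-1}$) and uses the first part to produce a point $\vec{x}'\in Z(p)$ satisfying $y_i=y_j$.

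I do not expect any real obstacle; the lemma is essentially a one-line AM--GM after the right change of viewpoint. The only mild subtlety is confirming that ``$p_3\ge 0$ on $\Delta_{m-1}$'' translates to $C\ge 0$ despite the ``other'' coordinates of $\vec{x}$ summing to $1-s$ rather than $1$, which is handled by the observation that $p_3$ is independent of the two frozen variables.
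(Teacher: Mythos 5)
Your argument is correct and is essentially the same as the paper's: freeze the coordinates other than $X_i,X_j$, note that the coefficient of $X_iX_j$ evaluates to a nonnegative constant (because $p_3$ is nonnegative on $\Delta_{m-1}$ and $\vec{x}\in\Delta_{m-1}$), and apply AM--GM to conclude $p(\vec{x}')\ge p(\vec{x})=\lambda(p)$, hence $\vec{x}'\in Z(p)$. The ``mild subtlety'' you flag at the end is not actually an issue — $\vec{x}$ itself lies in $\Delta_{m-1}$, so $C=p_3(\vec{x})\ge 0$ immediately — but your handling of it is fine.
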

\begin{proof}
Using the AM-GM inequality we obtain
\begin{align}
& p(x_1, \ldots,x_{i-1}, \frac{x_i+x_j}{2}, x_{i+1}, \ldots, x_{j-1}, \frac{x_i+x_j}{2}, x_{j+1}, \ldots, x_m) \notag\\
& = p_1 + p_2\left(\frac{x_i+x_j}{2}+ \frac{x_i+x_j}{2}\right) + p_3\left(\frac{x_i+x_j}{2}\right)^2  \notag\\
& \ge p_1 + p_2(x_i+x_j) + p_3 x_ix_j
= \lambda(p), \notag
\end{align}
where the last equality follows from our assumption that $(x_1, \ldots, x_m)\in Z(p)$.
By the definition of $\lambda(p)$, the inequality above is actually an equality, so we have
$(x_1, \ldots,x_{i-1}, (x_i+x_j)/2, x_{i+1}, \ldots, x_{j-1}, (x_i+x_j)/2, x_{j+1}, \ldots, x_m)\in Z(p)$.
\end{proof}

Now we are ready to prove Proposition~\ref{PROP:multilinear-polynomial}.

\begin{proof}[Proof of Proposition~\ref{PROP:multilinear-polynomial}]
Let $p, p_1, p_2, p_3, p_4, p_5, \hat{p}$ be polynomials that satisfy the assumptions in Proposition~\ref{PROP:multilinear-polynomial}.
By symmetry, we may assume that $\{i,j\} = \{1,2\}$.
Let $\widehat{X}_1 = X_1+ X_1'$ and $\widehat{X}_2 = X_2+X_2'$.
It follows from the AM-GM inequality that
\begin{align}\label{equ:hat-p-and-p}
& \hat{p}(X_1,X'_1,X_2,X'_2,X_3,\ldots,X_m)  \notag\\
&  = p_1 + p_2(X_1+X'_1+X_2+X'_2)
            + p_{4}(X_1+X'_1)(X_2+X'_2)
            + p_{5}(X_1+X_2)(X'_1+X'_2) \notag\\
& \le p_1 + p_2(\widehat{X}_1+\widehat{X}_2)
            + p_{4}\left(\frac{\widehat{X}_1+\widehat{X}_2}{2}\right)^2
            + p_{5}\left(\frac{\widehat{X}_1+\widehat{X}_2}{2}\right)^2 \notag\\
& = p_1 + p_2(\widehat{X}_1+\widehat{X}_2)
            + p_{3}\left(\frac{\widehat{X}_1+\widehat{X}_2}{2}\right)^2 \notag\\
& = p((\widehat{X}_1+\widehat{X}_2)/2, (\widehat{X}_1+\widehat{X}_2)/2, X_3, \ldots, X_m).
\end{align}
This implies that $\lambda(\hat{p}) \le \lambda(p)$.

Let $\alpha \in [0,1]$ and
\begin{align}
\vec{y} = \alpha \cdot (\frac{x_1+x_2}{2},0,0,\frac{x_1+x_2}{2}, x_3, \ldots, x_m)
+ (1-\alpha) \cdot (0,\frac{x_1+x_2}{2},\frac{x_1+x_2}{2},0, x_3, \ldots, x_m). \notag
\end{align}
Then it follows from Equation~(\ref{equ:hat-p-and-p}) that
\begin{align}
 \hat{p}(\vec{y})  = p((x_1+x_2)/2, (x_1+x_2)/2, x_3, \ldots, x_m). \notag
\end{align}
By Lemma~\ref{LEMMA:max-p-symmetric}, we have
\begin{align}
p((x_1+x_2)/2, (x_1+x_2)/2, x_3, \ldots, x_m)
= p(x_1, x_2, x_3, \ldots, x_m) = \lambda(p). \notag
\end{align}
Therefore, $\vec{y}\in Z(\hat{p})$.
\end{proof}
\section{Nonmininal hypergraphs}\label{SEC:Proof-nonminimal-hypergraphs}
In this section, we prove the results that were stated in Section~\ref{SUBSEC:nonminimal-hypergraphs}.

\subsection{Proof for Proposition~\ref{PROP:nonminal-finite-family}}
In this subsection we prove Proposition~\ref{PROP:nonminal-finite-family}.

For every $r$-graph $\mathcal{G}$ let $\mathcal{F}_{\infty}(\mathcal{G})$ be the (infinite) family of all $r$-graphs that are not $\mathcal{G}$-colorable, i.e.
\begin{align}
\mathcal{F}_{\infty}(\mathcal{G})
= \left\{\text{$r$-graph $F$} \colon \text{ and $F$ is not $\mathcal{G}$-colorable}\right\}.\notag
\end{align}
For every positive integer $M$ define the family $\mathcal{F}_{M}(\mathcal{G})$
of $r$-graphs as
\begin{align}
\mathcal{F}_{M}(\mathcal{G})
= \left\{F \in \mathcal{F}_{\infty}(\mathcal{G})\colon v(F)\le M \right\}.\notag
\end{align}
It is clear that $\mathcal{F}_{M}(\mathcal{G})$ is nonempty if $M\ge v(\mathcal{G})+1$
since it contains all $r$-graphs $F$ with at most $M$ vertices and $K_{v(\mathcal{G})+1}\subset \partial_{r-2}F$.
It is also clear from the definition that $\mathcal{F}_{M}(\mathcal{G}) \subset \mathcal{F}_{M'}(\mathcal{G})$ for all $M'\ge M$.

It follows easily from the definition that every $\mathcal{F}_{\infty}(\mathcal{G})$-free $r$-graph is $\mathcal{G}$-colorable.
In particular, every maximum $\mathcal{F}_{\infty}(\mathcal{G})$-free $r$-graph on $n$ vertices is $\mathcal{G}$-colorable.
One of the main results in~\cite{PI14} implies the following compactness result.
For every minimum $r$-graph $\mathcal{G}$ there exists a constant $M_0 \in \mathbb{N}$ such that every maximum $\mathcal{F}_{M_0}(\mathcal{G})$-free $r$-graph on $n$ vertices is $\mathcal{G}$-colorable.
Proposition~\ref{PROP:nonminal-finite-family} actually shows that a similar compactness result holds for all $r$-graphs $\mathcal{G}$.

The following simple lemma is a special case of Lemma~8 in~\cite{PI14}.

\begin{lemma}[\cite{PI14}]\label{LEMMA:F-free-equ-F-homo-free}
For every $r$-graph $\mathcal{G}$ and every positive integer $M$ the family $\mathcal{F}_{M}(\mathcal{G})$ is blowup-invariant.
\end{lemma}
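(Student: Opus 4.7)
The plan is to argue via contrapositive. Unpacking the definitions, blowup-invariance of $\mathcal{F}_{M}(\mathcal{G})$ amounts to the following implication: whenever an $r$-graph $\mathcal{H}$ admits a homomorphism from some $F \in \mathcal{F}_{M}(\mathcal{G})$, the hypergraph $\mathcal{H}$ already contains some $F' \in \mathcal{F}_{M}(\mathcal{G})$ as a subgraph. So I would start by fixing $F \in \mathcal{F}_{M}(\mathcal{G})$ and a homomorphism $\phi\colon V(F) \to V(\mathcal{H})$, and my goal would be to exhibit an explicit $F' \subseteq \mathcal{H}$ with $F' \in \mathcal{F}_{M}(\mathcal{G})$.

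The natural candidate is the image $F' := \phi(F)$, i.e.\ the $r$-graph on vertex set $\phi(V(F))$ with edge set $\{\phi(E) : E \in F\}$. I would first observe that $F'$ is well-defined as an $r$-graph and sits inside $\mathcal{H}$: since $\mathcal{G}$-colorability (and thus the notion of homomorphism) forces $\phi(E)$ to be an honest $r$-element subset in the target, each $\phi(E)$ is indeed an $r$-edge, and by the homomorphism property each such $\phi(E)$ lies in $\mathcal{H}$. Moreover $|V(F')| \le |V(F)| \le M$, so the size condition in the definition of $\mathcal{F}_{M}(\mathcal{G})$ is free.

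The heart of the matter is showing $F' \in \mathcal{F}_{\infty}(\mathcal{G})$, i.e.\ that $F'$ is not $\mathcal{G}$-colorable. I would do this by contradiction: if $\psi\colon V(F') \to V(\mathcal{G})$ were a homomorphism witnessing $\mathcal{G}$-colorability of $F'$, then $\psi \circ \phi\colon V(F) \to V(\mathcal{G})$ would satisfy $(\psi \circ \phi)(E) = \psi(\phi(E)) \in \mathcal{G}$ for every $E \in F$, because $\phi(E) \in F'$ by construction. Hence $\psi \circ \phi$ would be a $\mathcal{G}$-coloring of $F$, contradicting $F \in \mathcal{F}_{M}(\mathcal{G}) \subseteq \mathcal{F}_{\infty}(\mathcal{G})$. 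This yields $F' \in \mathcal{F}_{M}(\mathcal{G})$, as desired.

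There is no real obstacle here; the only subtle point is confirming that the definitions of ``homomorphism'' and ``$\mathcal{G}$-colorable'' given in the paper force edges to map to edges bijectively (so that $\phi(E)$ really is an $r$-set rather than a smaller set), which is immediate since $\phi(E)$ is required to be a member of the $r$-graph in the target. Once that is verified, the composition $\psi \circ \phi$ argument is a one-line check and completes the proof.
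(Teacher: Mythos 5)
Your proof is correct. The paper does not reproduce an argument for this lemma and instead cites it as a special case of Lemma~8 in~\cite{PI14}; your composition-of-homomorphisms argument (form the image $F' = \phi(F) \subseteq \mathcal{H}$, check $v(F') \le M$, and observe that any $\mathcal{G}$-coloring $\psi$ of $F'$ would give the $\mathcal{G}$-coloring $\psi \circ \phi$ of $F$, contradicting $F \in \mathcal{F}_{\infty}(\mathcal{G})$) is precisely the standard proof of that fact.
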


The following lemma extends Observation~2.3~(a) in~\cite{LIU19}.

\begin{lemma}\label{LEMMA:F-free-shadow-Km-free}
Let $\mathcal{G}$ be an $r$-graph.
Let $m$ denote the order of a maximum complete subgraph in $\partial_{r-2}\mathcal{G}$
and let $M = m+1+(r-2)\binom{m+1}{2}$.
Suppose that an $r$-graph $\mathcal{H}$ is $\mathcal{F}_{M}(\mathcal{G})$-free.
Then $\partial_{r-2}\mathcal{H}$ is $K_{m+1}$-free.
\end{lemma}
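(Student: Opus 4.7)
The plan is to prove the contrapositive: if $\partial_{r-2}\mathcal{H}$ contains a copy of $K_{m+1}$, then $\mathcal{H}$ contains a member of $\mathcal{F}_M(\mathcal{G})$ as a subgraph.

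First I would fix a vertex set $S \subset V(\mathcal{H})$ of size $m+1$ such that every pair in $\binom{S}{2}$ is an edge of $\partial_{r-2}\mathcal{H}$. For each such pair $\{u,v\}$, the definition of the iterated shadow allows me to select an $r$-edge $E_{u,v}\in \mathcal{H}$ with $\{u,v\}\subset E_{u,v}$. Let $F$ be the $r$-graph with edge set $\{E_{u,v}\colon \{u,v\}\in \binom{S}{2}\}$ and vertex set $S\cup \bigcup_{\{u,v\}\in \binom{S}{2}}(E_{u,v}\setminus\{u,v\})$. Each chosen $E_{u,v}$ contributes at most $r-2$ vertices outside $S$, so
\[
v(F) \le (m+1) + (r-2)\binom{m+1}{2} = M.
\]
Clearly $F \subset \mathcal{H}$.

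Next I would verify that $F$ is not $\mathcal{G}$-colorable. Suppose for contradiction that $\phi\colon V(F)\to V(\mathcal{G})$ is a homomorphism. For every pair $\{u,v\}\in \binom{S}{2}$, the edge $E_{u,v}\in F$ gets mapped to an edge $\phi(E_{u,v})\in \mathcal{G}$, which forces $\phi$ to be injective on $E_{u,v}$; in particular $\phi(u)\neq \phi(v)$, and the pair $\{\phi(u),\phi(v)\}$ lies in $\partial_{r-2}\mathcal{G}$. Consequently $\phi$ is injective on $S$ and $\phi(S)$ spans a $K_{m+1}$ in $\partial_{r-2}\mathcal{G}$, contradicting the maximality of $m$. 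Hence $F\in \mathcal{F}_\infty(\mathcal{G})$, and combined with $v(F)\le M$ we get $F\in \mathcal{F}_M(\mathcal{G})$, which contradicts the assumption that $\mathcal{H}$ is $\mathcal{F}_M(\mathcal{G})$-free.

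The whole argument is essentially bookkeeping once one isolates the right $r$-graph $F$, so I do not anticipate any serious obstacle; the only subtle point is to confirm that a homomorphism from $F$ to $\mathcal{G}$ is necessarily injective on each edge, which is what converts a $K_{m+1}$ in $\partial_{r-2}F$ into a $K_{m+1}$ in $\partial_{r-2}\mathcal{G}$. This ensures the vertex budget $M = m+1 + (r-2)\binom{m+1}{2}$ is sharp for the construction.
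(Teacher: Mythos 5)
Your proof is correct and follows essentially the same route as the paper's: both pick a clique $C$ (your $S$) in $\partial_{r-2}\mathcal{H}$, select one witnessing $r$-edge per pair to build $F$, bound $v(F)\le M$, and then use a putative $\mathcal{G}$-coloring of $F$ to produce a $K_{m+1}$ in $\partial_{r-2}\mathcal{G}$, contradicting the choice of $m$. The only cosmetic difference is that you phrase it as a contrapositive while the paper runs a direct contradiction.
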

\begin{proof}
Suppose to the contrary that there exists a set $C \subset V(\mathcal{H})$ of size $m+1$
such that the induced subgraph of $\partial\mathcal{H}$ on $C$ is complete.
For every pair $\{u,v\}\subset C$ let $E_{u,v}\in \mathcal{H}$ be an edge that contains $\{u,v\}$.
Let $F = \{E_{u,v}\colon \{u,v\}\subset C\}$.
It is clear that $v(F) \le m+1+(r-2)\binom{m+1}{2} = M$.
So it follows from our assumption that $F$ is $\mathcal{G}$-colorable.
In other words, there exists a homomorphism $\phi\colon V(F)\to V(\mathcal{G})$ from $F$ to $\mathcal{G}$.
Since every pair $\{u,v\}\subset C$ is contained in an edge of $F$, we have $\phi(u) \neq \phi(v)$.
Therefore, $|\phi(C)| = |C| = m+1$.
However, $u\sim v$ in $\partial_{r-2}F$ implies that $\phi(u)\sim \phi(v)$ in $\partial_{r-2}\mathcal{G}$,
which means that the induced subgraph of $\partial_{r-2}\mathcal{G}$ on $\phi(C)$ is complete, a contradiction.
\end{proof}

We will prove the following statement which implies
Proposition~\ref{PROP:nonminal-finite-family}.

\begin{proposition}\label{PROP:nonminal-finite-family+}
Suppose that $r\ge 3$, $\mathcal{G}$ is an $r$-graph, and $M = v(\mathcal{G})+1+(r-2)\binom{v(\mathcal{G})+1}{2}$.
Then $\mathrm{ex}(n,\mathcal{F}_{M}(\mathcal{G})) = \max\{|\mathcal{H}|\colon
\text{$v(\mathcal{H}) = n$ and $\mathcal{H}$ is $\mathcal{G}$-colorable}\}$
for every $n\in \mathbb{N}$.
\end{proposition}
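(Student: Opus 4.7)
The plan is to prove the two inequalities separately; the $\ge$ direction is essentially definitional, while the $\le$ direction is where the real work lies. For the easy direction, any $\mathcal{G}$-colorable $r$-graph is automatically $\mathcal{F}_\infty(\mathcal{G})$-free and hence $\mathcal{F}_M(\mathcal{G})$-free, so $\max\{|\mathcal{H}|\colon v(\mathcal{H})=n,\ \mathcal{H}\text{ is }\mathcal{G}\text{-colorable}\}\le \mathrm{ex}(n,\mathcal{F}_M(\mathcal{G}))$. For the hard direction, I would invoke Zykov symmetrization: since $\mathcal{F}_M(\mathcal{G})$ is blowup-invariant by Lemma~\ref{LEMMA:F-free-equ-F-homo-free}, Theorem~\ref{THEOREM:ex(n,F)=max-blowup-of-H} yields a symmetrized $\mathcal{F}_M(\mathcal{G})$-free $r$-graph $\mathcal{H}$ on $n$ vertices with $|\mathcal{H}|=\mathrm{ex}(n,\mathcal{F}_M(\mathcal{G}))$, so it suffices to show that this particular $\mathcal{H}$ is $\mathcal{G}$-colorable.

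Next I would analyze the structure of $\mathcal{H}$. Let $V_1,\ldots,V_k$ be its equivalence classes and fix a representative $v_i\in V_i$ for each $i$. Symmetrization forces any two vertices from distinct classes to be adjacent---otherwise they would constitute a non-equivalent non-adjacent pair---so the induced subgraph $\mathcal{R}$ of $\mathcal{H}$ on $\{v_1,\ldots,v_k\}$ is $2$-covered. A short swap argument using the common link shared by all members of each class then shows $\mathcal{H}=\mathcal{R}[V_1,\ldots,V_k]$, i.e.\ $\mathcal{H}$ is the full blowup of $\mathcal{R}$. Since any blowup of a $\mathcal{G}$-colorable $r$-graph is again $\mathcal{G}$-colorable (compose a colouring of $\mathcal{R}$ with the class-projection map), it therefore suffices to show that $\mathcal{R}$ itself is $\mathcal{G}$-colorable.

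The crucial step is bounding $k$. Writing $M_0=m+1+(r-2)\binom{m+1}{2}$, where $m$ is the clique number of $\partial_{r-2}\mathcal{G}$, we have $m\le v(\mathcal{G})$ and hence $M_0\le M$, so $\mathcal{F}_{M_0}(\mathcal{G})\subseteq \mathcal{F}_M(\mathcal{G})$ and $\mathcal{H}$ is also $\mathcal{F}_{M_0}(\mathcal{G})$-free. Lemma~\ref{LEMMA:F-free-shadow-Km-free} then tells us that $\partial_{r-2}\mathcal{H}$ is $K_{m+1}$-free, hence $K_{v(\mathcal{G})+1}$-free. Since $\mathcal{R}$ is $2$-covered on $k$ vertices, $\partial_{r-2}\mathcal{R}=K_k$ embeds into $\partial_{r-2}\mathcal{H}$, forcing $k\le v(\mathcal{G})\le M$. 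Thus $\mathcal{R}$ has at most $M$ vertices; if it were not $\mathcal{G}$-colorable, it would lie in $\mathcal{F}_M(\mathcal{G})$, contradicting the $\mathcal{F}_M(\mathcal{G})$-freeness of $\mathcal{H}$, which contains $\mathcal{R}$ as an induced subgraph. Therefore $\mathcal{R}$, and hence $\mathcal{H}$, is $\mathcal{G}$-colorable.

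The main obstacle I anticipate is the structural claim in the second paragraph: that a symmetrized $\mathcal{F}_M(\mathcal{G})$-free $r$-graph coincides with the full blowup of its reduced graph $\mathcal{R}$. Establishing this requires using both consequences of symmetrization---equivalence classes are independent sets whose vertices share a common link, and vertices from distinct classes must be adjacent---and chaining single-vertex substitutions along the edges to transfer membership between representatives and arbitrary class elements. Once this blowup description is in hand, the shadow bound on $k$ from Lemma~\ref{LEMMA:F-free-shadow-Km-free} and the final subgraph/colorability deduction are essentially bookkeeping.
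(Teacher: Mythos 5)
Your proposal follows essentially the same route as the paper: blowup-invariance plus Zykov symmetrization (Theorem~\ref{THEOREM:ex(n,F)=max-blowup-of-H}) reduces the problem to showing a symmetrized $\mathcal{F}_M(\mathcal{G})$-free $r$-graph is $\mathcal{G}$-colorable, and you then pass to the reduced graph on class representatives, bound its size via Lemma~\ref{LEMMA:F-free-shadow-Km-free}, and conclude by the definition of $\mathcal{F}_M(\mathcal{G})$. The only difference is that you spell out the $K_{m+1}$-free step a bit more carefully than the paper, explicitly distinguishing the clique number of $\partial_{r-2}\mathcal{G}$ from $v(\mathcal{G})$; this is a minor refinement, not a different argument.
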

\begin{proof}
Let $\mathcal{G}$ be an $r$-graph and let $m = v(\mathcal{G})$.
Let $\mathfrak{G}$ be the collection of all $\mathcal{G}$-colorable $r$-graphs.
Define $\mathfrak{g}(n) = \max\{|\mathcal{H}|\colon \mathcal{H} \in \mathfrak{G} \text{ and }v(\mathcal{H}) = n\}$.
Let $M = m+1 + (r-2)\binom{m+1}{2}$ and $\mathcal{F} = \mathcal{F}_{M}(\mathcal{G})$.
It follows from Lemma~\ref{LEMMA:F-free-equ-F-homo-free} that $\mathcal{F}$ is blowup-invariant.
Therefore, by Theorem~\ref{THEOREM:ex(n,F)=max-blowup-of-H},
it suffices to prove the following claim.

\begin{claim}\label{CLAIM:symmetrized-stable}
Every symmetrized $\mathcal{F}$-free $r$-graph is contained in $\mathfrak{G}$.
\end{claim}
\begin{proof}
Let $\mathcal{H}$ be a symmetrized $\mathcal{F}$-free $r$-graph.
Let $C\subset V(\mathcal{H})$ be a set that contains exactly one vertex from each equivalence class of $\mathcal{H}$
and let $\mathcal{T}$ denote the induced subgraph of $\mathcal{H}$ on $C$.
It is clear from the definition of symmetrized hypergraphs that $\mathcal{H}$ is a blowup of $\mathcal{T}$.
Since $\partial_{r-2}\mathcal{T} \cong K_{|C|}$, it follows from
Lemma~\ref{LEMMA:F-free-shadow-Km-free} that $|C|\le m$.
Since $\mathcal{T}\not\in \mathcal{F}$, we know that $\mathcal{T}$ is $\mathcal{G}$-colorable.
This implies that $\mathcal{H}$ is also $\mathcal{G}$-colorable.
Therefore, $\mathcal{H} \in \mathfrak{G}$.
\end{proof}
This completes the proof of Proposition~\ref{PROP:nonminal-finite-family+}.
\end{proof}

\subsection{Proofs for Proposition~\ref{PROP:nonminimal-corss-blowup} and Theorem~\ref{THM:nonminimal-general-turan-stability}}
In this subsection we prove Proposition~\ref{PROP:nonminimal-corss-blowup} and Theorem~\ref{THM:nonminimal-general-turan-stability}.

Let us assume that the vertex set of $\mathcal{G}$ is $\{v_1,v_2,\ldots, v_m\}$.
Let $v_1',v_2'$ denote the clones of $v_1,v_2$ in $\mathcal{G}\boxplus\{v_1,v_2\}$, respectively.
Let $U_1 = \{v_1,v_2',v_3,\ldots, v_m\}$ and $U_2 = \{v_1',v_2,v_3,\ldots, v_m\}$.
It is easy to see from the definition that the induced subgraph of $\mathcal{G}\boxplus\{v_1,v_2\}$ on
$U_1$ and $U_2$ are both isomorphic to $\mathcal{G}$, this proves~(a).
Statement~(b) follows easily from the definition of crossed blowup.
So we may focus on~(c).

We will prove the following statement which implies Proposition~\ref{PROP:nonminimal-corss-blowup}~(c).

\begin{proposition}\label{PROP:nonminimal-corss-blowup+}
Suppose that $\mathcal{G}$ is an $m$-vertex $3$-graph and $\{v_1,v_2\}\subset \mathcal{G}$ is a pair of vertices with $d(v_1,v_2)\ge 2$.
Suppose that $\{v_1,v_2\}$ is symmetric in $\mathcal{G}$.
Then $\lambda(\mathcal{G}\boxplus\{v_1,v_2\}) = \lambda(\mathcal{G})$.
Moreover, for every $(z_1,\ldots,z_{m})\in Z(\mathcal{G})$
we have
$$L\left((\bar{z},0,0,\bar{z},z_3,\ldots,z_{m}), (0,\bar{z},\bar{z},0,z_3,\ldots,z_{m})\right)
\subset Z(\mathcal{G}\boxplus\{v_1,v_2\}),$$
where $\bar{z} = (z_1+z_2)/2$.
\end{proposition}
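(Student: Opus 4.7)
The plan is to reduce the claim to a direct application of Proposition~\ref{PROP:multilinear-polynomial}. The entire argument is bookkeeping: one writes out the Lagrangian polynomials of $\mathcal{G}$ and of $\mathcal{G}\boxplus\{v_1,v_2\}$, and observes that the latter is exactly the polynomial $\hat{p}$ built from the former in the proposition, for a well-chosen decomposition $p_3=p_4+p_5$.

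First I would exploit the symmetry hypothesis. Setting $L:=L_{\mathcal{G}}(v_1)-v_2=L_{\mathcal{G}}(v_2)-v_1$ and $p_L:=\sum_{e\in L}\prod_{j\in e}X_j$, and writing $N_{\mathcal{G}}(v_1,v_2)=\{u_1,\ldots,u_k\}$ (with $k\ge 2$) and $p_0$ for the contribution of edges of $\mathcal{G}$ disjoint from $\{v_1,v_2\}$, the Lagrangian polynomial decomposes as
\begin{align}
p_{\mathcal{G}} \;=\; p_0 \;+\; p_L\,(X_{v_1}+X_{v_2}) \;+\; \Bigl(\textstyle\sum_{i=1}^{k}X_{u_i}\Bigr)\,X_{v_1}X_{v_2}. \notag
\end{align}
This is exactly the form $p_1+p_2(X_i+X_j)+p_3X_iX_j$ in Proposition~\ref{PROP:multilinear-polynomial}, with $p_3=\sum_{i=1}^{k}X_{u_i}$ nonnegative on the simplex.

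Next I would read off $p_{\mathcal{G}\boxplus\{v_1,v_2\}}$ directly from the three-step definition of the crossed blowup. Edges of $\mathcal{G}$ missing $\{v_1,v_2\}$ contribute $p_0$. After step (a) the surviving link edges of $v_1$ and $v_2$, together with the link edges at their clones $v_1',v_2'$ from step (b), contribute $p_L\bigl(X_{v_1}+X_{v_1'}+X_{v_2}+X_{v_2'}\bigr)$. The step (c) edges attached to $u_i$ for $i\in[k-1]$ give $X_{u_i}(X_{v_1}+X_{v_1'})(X_{v_2}+X_{v_2'})$, while those attached to $u_k$ give $X_{u_k}(X_{v_1}+X_{v_2})(X_{v_1'}+X_{v_2'})$. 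Setting
\begin{align}
p_4 \,:=\, \sum_{i=1}^{k-1}X_{u_i}, \qquad p_5 \,:=\, X_{u_k}, \notag
\end{align}
both nonnegative on the simplex and with $p_4+p_5=p_3$, the resulting polynomial is exactly $\hat{p}$ from Proposition~\ref{PROP:multilinear-polynomial} in the indices $i=v_1,\,j=v_2$ with primes $X_i'=X_{v_1'},\,X_j'=X_{v_2'}$.

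Finally, Proposition~\ref{PROP:multilinear-polynomial} gives $\lambda(\mathcal{G}\boxplus\{v_1,v_2\})=\lambda(p_{\mathcal{G}\boxplus\{v_1,v_2\}})=\lambda(p_{\mathcal{G}})=\lambda(\mathcal{G})$, and for every $(z_1,\ldots,z_m)\in Z(\mathcal{G})$ it yields $L(\vec{y},\vec{z})\subset Z(\mathcal{G}\boxplus\{v_1,v_2\})$ with the two endpoints of the proposition, which in our coordinate ordering $(X_{v_1},X_{v_1'},X_{v_2},X_{v_2'},X_3,\ldots,X_m)$ are precisely
\begin{align}
(\bar z,0,0,\bar z,z_3,\ldots,z_m) \quad\text{and}\quad (0,\bar z,\bar z,0,z_3,\ldots,z_m), \notag
\end{align}
where $\bar z=(z_1+z_2)/2$. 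The only real obstacle is the bookkeeping: one must verify that the asymmetric role of $u_k$ in the definition of the crossed blowup matches the asymmetric role of $p_5$ in the decomposition $p_3=p_4+p_5$ (the crucial point being that $k\ge 2$ guarantees $p_4$ is a genuine, nonnegative polynomial). Once this translation is set up correctly, everything else is immediate from Proposition~\ref{PROP:multilinear-polynomial}.
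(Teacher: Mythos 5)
Your proof is correct and essentially identical to the paper's: you write $p_{\mathcal{G}}$ in the form $p_1+p_2(X_{v_1}+X_{v_2})+p_3X_{v_1}X_{v_2}$, identify $p_{\mathcal{G}\boxplus\{v_1,v_2\}}$ as exactly $\hat{p}$ with $p_4=\sum_{i<k}X_{u_i}$ and $p_5=X_{u_k}$, and invoke Proposition~\ref{PROP:multilinear-polynomial}. Apart from cosmetic differences in notation (the paper calls your $p_0$ by $p_1$ and your $p_L$ by $p_2$, and fixes $v_1=1,v_2=2$), this is the same argument.
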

\begin{proof}
Let $k = d(v_1,v_2)$ and assume that $N(v_1,v_2)=\{v_{i_1}, \ldots, v_{i_k}\}$.
Let $W = \{v_3,\ldots,v_m\}$. Define polynomials $p_1, \ldots, p_5$ as
\begin{align}
p_{1} & = \sum_{v_iv_jv_k\in \mathcal{G}[W]}X_iX_jX_k, \notag\\
p_{2} & = \sum_{v_iv_j\in L_{\mathcal{G}}(v_1)[W]}X_iX_j = \sum_{v_iv_j\in L_{\mathcal{G}}(v_2)[W]}X_iX_j,  \notag\\
p_{3} & = X_{i_1}+ \cdots + X_{i_k}, \quad
p_{4}  = X_{i_1}+ \cdots + X_{i_{k-1}}, \quad\text{and}\quad
p_{5} = X_{i_k}. \notag
\end{align}
It is easy to see that $p_{\mathcal{G}}(X_1, \ldots, X_{m})
 = p_1 + p_2(X_1 + X_2)+ p_{3}X_1X_2$.
On the other hand, it follows from the definition of crossed blowup that
\begin{align}
& p_{\mathcal{G}\boxplus\{v_1,v_2\}}(X_1,X_1',X_2,X_2',X_3 \ldots, X_{m}) \notag\\
& = p_1 + p_2(X_1 + X_2+X_1'+X_2')+ p_{4}(X_1+X_1')(X_2+X_2')+p_5(X_1+X_2)(X_1'+X_2').  \notag
\end{align}
So by Proposition~\ref{PROP:multilinear-polynomial} we have
$$L\left((\bar{z},0,0,\bar{z},z_3,\ldots,z_{m}), (0,\bar{z},\bar{z},0,z_3,\ldots,z_{m})\right)
\subset Z(\mathcal{G}\boxplus\{v_1,v_2\}).$$
\end{proof}

Now we are ready to prove Theorem~\ref{THM:nonminimal-general-turan-stability}.

\begin{proof}[Proof of Theorem~\ref{THM:nonminimal-general-turan-stability}]
If $d_{\mathcal{G}}(v_1,v_2) \ge 2$, then we let $\mathcal{G}' = \mathcal{G}\boxplus \{v_1,v_2\}$, and
it follows from Proposition~\ref{PROP:nonminimal-corss-blowup}~(c) that $\lambda(\mathcal{G}') = \lambda(\mathcal{G})$.

If $d_{\mathcal{G}}(v_1,v_2) = 1$, then we let $\{w\}= N_{\mathcal{G}}(v_1, v_2)$
and let $\mathcal{G}'$ be obtained as follows.
First, we take two vertex-disjoint copies of $\mathcal{G}$ and identify them on the set $V(\mathcal{G})\setminus \{w\}$
(i.e. we blowup $w$ into two vertices).
Denote by $\mathcal{G}^2$ the resulting $3$-graph.
For example, if $\mathcal{G} = \{125,345\}$ and $\{v_1,v_2\} = \{1,2\}$, then $\mathcal{G}^2 = \{125,345,126,346\}$.
Let $\mathcal{G}' = \mathcal{G}^2 \boxplus\{v_1,v_2\}$.
Since $\mathcal{G}^2$ is $\mathcal{G}$-colorable and $\mathcal{G}\subset \mathcal{G}^2$,
we have $\lambda(\mathcal{G}^2) = \lambda(\mathcal{G})$.
Notice that $\{v_1,v_2\}$ is still symmetric in $\mathcal{G}^2$,
so it follows from Proposition~\ref{PROP:nonminimal-corss-blowup}~(c) that
$\lambda(\mathcal{G}') = \lambda(\mathcal{G}^2) = \lambda(\mathcal{G})$.

Let $m = v(\mathcal{G}')$ and $M = m+1 + (r-2)\binom{m+1}{2}$.
Let $\mathcal{F} = \mathcal{F}_{M}(\mathcal{G}')$.
It follows from Proposition~\ref{PROP:nonminal-finite-family+} that
$\mathrm{ex}(n,\mathcal{F}) = \max\{|\mathcal{H}|\colon  \text{$v(\mathcal{H})=n$ and $\mathcal{H}$ is $\mathcal{G}'$-colorable}\}$.
Then by Lemma~\ref{LEMMA:|H|<=lambda-T-n^r}, we have  $\pi(\mathcal{F}) \le 6\lambda(\mathcal{G}') = 6\lambda(\mathcal{G})$.
The converse of this inequality follows from the fact that every blowup of $\mathcal{G}'$ is $\mathcal{F}$-free.
Therefore, we have $\pi(\mathcal{F}) = 6\lambda(\mathcal{G})$.

Now we prove that $\xi(\mathcal{F})=\infty$.
Suppose to the contrary that $\xi(\mathcal{F})=t$ for some $t\in \mathbb{N}^+$.
For every $n \in \mathbb{N}$ let $\mathcal{S}_1(n),\ldots, \mathcal{S}_t(n)$ be the $n$-vertex $3$-graphs that witness
the $t$-stability of $\mathcal{F}$.
For every $i\in [t]$ and $n\in \mathbb{N}$
let $\rho_{i,n} =  \rho(\mathcal{G}',\mathcal{S}_i(n)) = N(\mathcal{G}',\mathcal{S}_i(n))/\binom{n}{m}$ ,
where $N(\mathcal{G}',\mathcal{S}_i(n))$ is the number of copies of $\mathcal{G}'$ in $\mathcal{S}_i(n)$.

We will get a contradiction by first showing that for all sufficiently large $n$
there exists a blowup $\widehat{\mathcal{G}}$ of $\mathcal{G}$ on $n$ vertices whose edge density is close to $\pi(\mathcal{F})$ but
whose shadow density is far from $\rho_{i,n}$ for every $i\in [t]$.
Then we argue that since every shadow edge is contained $\Omega(n)$ edges in $\widehat{\mathcal{G}}$,
the edit-distance of $\widehat{\mathcal{G}}$ and $\mathcal{S}_i(n)$ is $\Omega(n^3)$ for all $i\in [t]$.
This contradicts the definition of $t$-stable.

Notice that $v(\mathcal{G}) = m-2$ if $d_{\mathcal{G}}(v_1, v_2)\ge 2$ and $v(\mathcal{G}^2) = m-2$ if $d_{\mathcal{G}}(v_1, v_2) = 1$.
Let $(z_1,z_2,z_3,\ldots,z_{m-2})\in Z(\mathcal{G})$ (or $Z(\mathcal{G}^2)$ if $d_{\mathcal{G}}(v_1,v_2) = 1$)
be a vector such that $\zeta := \min\{z_1,z_2,z_3,\ldots,z_{m-2}\}$ is maximized.
By Lemma~\ref{LEMMA:max-p-symmetric}, we may assume that $z_1 = z_2$
since otherwise we can replace $z_1$ and $z_2$ by $(z_1+z_2)/2$.

\begin{claim}\label{CLAIM:zeta>0}
We have $\zeta>0$.
\end{claim}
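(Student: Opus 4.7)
The plan is to show that there exists at least one vector in $Z(\mathcal{G})$ (or $Z(\mathcal{G}^2)$ in the second case) with all coordinates strictly positive; since $(z_1,\ldots,z_{m-2})$ was chosen to maximize $\zeta$, this immediately yields $\zeta>0$.

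The key observation is a standard consequence of multilinearity: for any vertex $v_i$, if a point of the relevant simplex satisfies $x_i=0$, then by multilinearity $p_{\mathcal{G}}$ evaluated at this point equals $p_{\mathcal{G}-v_i}$ evaluated on the remaining coordinates, which is at most $\lambda(\mathcal{G}-v_i)$. Hence if $\mathcal{G}$ is minimal, the strict inequality $\lambda(\mathcal{G}-v_i) < \lambda(\mathcal{G})$ rules out any maximizer of $p_{\mathcal{G}}$ having a zero coordinate. This immediately settles the case $d_{\mathcal{G}}(v_1,v_2)\ge 2$, where $(z_1,\ldots,z_{m-2})$ is taken from $Z(\mathcal{G})$: every element of $Z(\mathcal{G})$ has all entries strictly positive, so $\zeta>0$.

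For the case $d_{\mathcal{G}}(v_1,v_2)=1$, the main obstacle is that $\mathcal{G}^2$ is \emph{not} minimal: the new vertex $w'$ is a clone of $w$, so removing $w'$ leaves the Lagrangian unchanged, and the one-line argument above breaks. The plan here is to exhibit a positive maximizer of $\mathcal{G}^2$ directly by lifting one from $\mathcal{G}$. Start with a maximizer $\vec{z}^{\,*}\in Z(\mathcal{G})$; by minimality of $\mathcal{G}$ and the argument above, every coordinate of $\vec{z}^{\,*}$ is strictly positive. Since $p_{\mathcal{G}^2}$ is obtained from $p_{\mathcal{G}}$ by substituting $X_w\mapsto X_w+X_{w'}$, the vector $\vec{z}$ obtained from $\vec{z}^{\,*}$ by splitting its $w$-coordinate equally between $w$ and $w'$ (and keeping every other entry unchanged) satisfies $p_{\mathcal{G}^2}(\vec{z})=p_{\mathcal{G}}(\vec{z}^{\,*})=\lambda(\mathcal{G})=\lambda(\mathcal{G}^2)$, lies in $\Delta_{m-3}$, and has all entries strictly positive.

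In both cases we have produced an element of the relevant zero set with all coordinates strictly positive; by the maximality of $\zeta$ this forces $\zeta>0$. The main subtlety to watch for is precisely that minimality of $\mathcal{G}$ does not transfer to $\mathcal{G}^2$, so the second case cannot be dispatched by applying the minimality hypothesis to $\mathcal{G}^2$ directly but must be reduced back to $\mathcal{G}$ via the clone substitution.
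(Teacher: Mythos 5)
Your proof is correct and takes essentially the same approach as the paper: in both cases, the $d(v_1,v_2)\ge 2$ case follows at once from minimality of $\mathcal{G}$, and for $d(v_1,v_2)=1$ the key point is the clone-substitution identity $p_{\mathcal{G}^2}(\ldots,X_w,X_{w'},\ldots)=p_{\mathcal{G}}(\ldots,X_w+X_{w'},\ldots)$ together with minimality of $\mathcal{G}$. The only (cosmetic) difference is the direction: you lift a strictly positive maximizer from $Z(\mathcal{G})$ up to $Z(\mathcal{G}^2)$ by splitting the $w$-coordinate, whereas the paper starts from the chosen $\vec z\in Z(\mathcal{G}^2)$, merges $z_w,z_{w'}$ to land in $Z(\mathcal{G})$, invokes minimality there, and then re-averages back into $Z(\mathcal{G}^2)$.
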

\begin{proof}
If $d(v_1,v_2) \ge 2$, then the minimality of $\mathcal{G}$ implies that
every vector in $Z(\mathcal{G})$ has only positive coordinates.
Therefore, $\zeta > 0$.

Suppose that $d(v_1,v_2) = 1$.
Without loss of generality let us assume that $N_{\mathcal{G}}(v_1,v_2) = \{v_3\}$
and $N_{\mathcal{G}^2}(v_1,v_2) = \{v_3,v_4\}$.
Since $v_4$ is a clone of $v_3$ in $\mathcal{G}^2$,
$(z_1,z_2,z_3,z_4,\ldots,z_{m-2})\in Z(\mathcal{G}^2)$ implies
that $(z_1,z_2,(z_3+z_4)/2,(z_3+z_4)/2,\ldots,z_{m-2})\in Z(\mathcal{G}^2)$.
For the same reason, $(z_1,z_2,z_3,z_4,\ldots,z_{m-2})\in Z(\mathcal{G}^2)$ implies that
$(z_1,z_2,z_3+z_4,z_5,\ldots,z_{m-2})\in Z(\mathcal{G})$.
The minimality of $\mathcal{G}$ implies that each coordinate of $(z_1,z_2,z_3+z_4,z_5,\ldots,z_{m-2})$ is positive.
Therefore, every coordinate in $(z_1,z_2,(z_3+z_4)/2,(z_3+z_4)/2,\ldots,z_{m-2})$ is positive,
and hence, $\zeta > 0$.
\end{proof}

Let $\vec{x}_{\alpha} = \alpha\cdot (z_1,0,0,z_2,z_3,\ldots,z_{m-2})+(1-\alpha)\cdot (0,z_1,z_2,0,z_3,\ldots,z_{m-2})$
for $\alpha \in [0,1]$.
Let $\widehat{\mathcal{G}}_{\alpha}(n) = \mathcal{G}'[V_1,V_1',V_2,V_2',V_3,\cdots,V_{m-2}]$ be a blowup of $\mathcal{G}'$,
where $|V_1| = \lfloor \alpha z_1 n \rfloor$, $|V_1'| = \lfloor (1-\alpha) z_1 n \rfloor$,
$|V_2| = \lfloor \alpha z_2 n \rfloor$, $|V_2'| = \lfloor (1-\alpha) z_2 n\rfloor$,
and $|V_i| = \lfloor z_i n \rfloor$ for $i\in [3,m-2]$.
Let $W = \{v_3,\ldots, v_{m-2}\}$.
Then
\begin{align}
|\partial\widehat{\mathcal{G}}_{\alpha}(n)|
& = \sum_{v_iv_j\in\partial\mathcal{G}\cap\binom{W}{2}}|V_i||V_j|
    + \sum_{v_i\in N_{\mathcal{G}}(v_1)}|V_i|\left(|V_1|+|V_1|'\right)
    + \sum_{v_i\in N_{\mathcal{G}}(v_2)}|V_i|\left(|V_2|+|V_2|'\right) \notag\\
&  \quad +\left(|V_1|+|V_1'|\right)\left(|V_2|+|V_2'|\right)+|V_1||V_1'|+|V_2||V_2'|. \notag
\end{align}
Taking the limit we obtain
\begin{align}
\lim_{n\to \infty}\rho\left(\partial\widehat{\mathcal{G}}_{\alpha}(n)\right)
& = 2\alpha(1-\alpha)(z_1+z_2) + 2C, \notag
\end{align}
where
\begin{align}
C
& = \sum_{v_iv_j\in\partial\mathcal{G}\cap\binom{W}{2}}z_iz_j
    + \sum_{v_i\in N_{\mathcal{G}}(v_1)}z_iz_1
    + \sum_{v_i\in N_{\mathcal{G}}(v_2)}z_iz_2 + z_1z_2 \notag
\end{align}
is independent of $\alpha$.

For $i\in [t+1]$ let $\alpha_i = \frac{i}{2(t+1)}$.
Since $z_1+z_2 > 0$ and $\alpha(1-\alpha)$ is increasing in $\alpha$ for $\alpha \in [0,1/2]$,
there exists $\epsilon>0$ and $N_0$ such that
\begin{align}
\left|\partial\widehat{\mathcal{G}}_{\alpha_j}(n)\right|
        -\left|\partial\widehat{\mathcal{G}}_{\alpha_i}(n)\right|
> \frac{6\epsilon n^2}{\zeta} \notag
\end{align}
for all $n \ge N_0$ and for all $1\le i < j \le t+1$.
Fix such an $\epsilon>0$.
By the definition of $t$-stable there exists $\delta= \delta(\epsilon)$ and $N_1 = N_1(\epsilon)$
such that   every $n$-vertex $\mathcal{F}$-free $3$-graph $\mathcal{H}$
with $|\mathcal{H}| \ge (1-\delta)\mathrm{ex}(n,\mathcal{F})$,
satisfies $|\mathcal{H}\triangle \mathcal{S}_i(n)| \le \epsilon n^3$ for some $i\in [t]$.
Since $\lim_{n\to \infty}\rho\left(\widehat{\mathcal{G}}_{\alpha}(n)\right) = \pi(\mathcal{F})$, there exists $N_2$ such that
$\left| \widehat{\mathcal{G}}_{\alpha}(n)\right| \ge (1-\delta)\mathrm{ex}(n,\mathcal{F})$ for all $\alpha\in[0,1]$ and $n\ge N_2$.
Therefore,
for every $j\in [t+1]$ there exists $i\in [t]$ such that
$|\widehat{\mathcal{G}}_{\alpha_j}(n)\triangle\mathcal{S}_i(n)|\le \epsilon n^3$
for all $n\ge \max\{N_0,N_1,N_2\}$.
By the Pigeonhole principle, there exists a pair $\{j,k\}$ such that
$|\widehat{\mathcal{G}}_{\alpha_j}(n)\triangle\mathcal{S}_i(n)|\le \epsilon n^3$
and $|\widehat{\mathcal{G}}_{\alpha_k}(n)\triangle\mathcal{S}_i(n)|\le \epsilon n^3$.
By the triangle inequality, we have $|\widehat{\mathcal{G}}_{\alpha_j}(n)\triangle\widehat{\mathcal{G}}_{\alpha_k}(n)|\le 2\epsilon n^3$.

Without loss of generality we may assume that $\alpha_{j}< \alpha_k$.
Observe that every edge $e\in \partial\widehat{\mathcal{G}}_{\alpha_j}(n)\triangle\partial\widehat{\mathcal{G}}_{\alpha_k}(n)$
is contained in the set $V_1\cup V_1'\cup V_2\cup V_2'$.
Let $\{\ell_1,\ell_2\}\subset [3,m-2]$ be the pair such that $N_{\mathcal{G}^2}=\{\ell_1,\ell_2\}$.
Then we have $N_{\widehat{\mathcal{G}}_{\alpha}(n)}(e) \ge \min\{|V_{\ell_1}|,|V_{\ell_2}|\} \ge \lfloor \zeta n\rfloor> \zeta n/2$.
Therefore, in order to transform $\widehat{\mathcal{G}}_{\alpha_j}(n)$ into $\widehat{\mathcal{G}}_{\alpha_k}(n)$
we need to remove at least
\begin{align}
\left(|\partial\widehat{\mathcal{G}}_{\alpha_k}(n)|-|\partial\widehat{\mathcal{G}}_{\alpha_j}(n)|\right)\frac{\zeta n}{2}
> \frac{6\epsilon n^2}{\zeta} \frac{\zeta n}{2} > 2\epsilon n^3 \notag
\end{align}
edges, a contradiction.
Therefore, $\xi(\mathcal{F}) = \infty$.
\end{proof}
\section{Proof of Theorem~\ref{THM:main-sec1.1}~(a),~(b), and~(c)}\label{SEC:proof-turan-number}
In this section we prove Theorem~\ref{THM:main-sec1.1}~(a),~(b), and~(c).
First let us define $\Gamma_t$ and $\mathcal{F}_t$ that were mentioned in Theorem~\ref{THM:main-sec1.1}.
For convenience, we will use $t+2$ instead of $t$ in the rest of this paper.

\begin{definition}
Let $t\ge 1$ be an integer.
\begin{enumerate}[label=(\alph*)]
\item Let
\begin{align}
\Gamma_{t+2} =
\begin{cases}
\{134, 234\}\boxplus \{3,4\} & \text{if } t=1, \\
K_{t+2}^{3}\boxplus \{t+1,t+2\} & \text{if } t\geq 2.
\end{cases} \notag
\end{align}
\item Let $\mathfrak{\Gamma}_{t+2}$ be the collection of all $\Gamma_{t+2}$-colorable $3$-graphs.
\item Let $\gamma_{t+2}(n) = \max\{|\mathcal{H}|\colon v(\mathcal{H}) = n \text{ and } \mathcal{H}\in\mathfrak{\Gamma}_{t+2}\}$.
\item Let $\mathcal{F}_{t+2} = \left\{F\colon v(F)\le 4(t+4)^2 \text{ and } F\not\in \mathfrak{\Gamma}_{t+2}\right\}$.
\end{enumerate}
\end{definition}
\noindent\textbf{Remark.}
In the rest of the paper, we always assume that for $t\ge 2$ the vertex set of $\Gamma_{t+2}$ is $[t+4]$, and $t+3, t+4$ are clones of $t+1, t+2$, respectively.

It follows from Proposition~\ref{PROP:nonminimal-corss-blowup} and Lemma~\ref{LEMMA:|H|<=lambda-T-n^r}
that $\gamma_{t+2}(n) \sim \lambda(K_{t+2}^3)n^3 = \frac{t(t+1)}{6(t+2)^2}n^3$
(for $t=1$ we also used $\lambda(\{134,234\}) = \lambda(K_{3}^3)$).

Theorem~\ref{THM:main-sec1.1}~(a) and~(c) follow easily from Theorem~\ref{THM:nonminimal-general-turan-stability}.
So we just need to prove Theorem~\ref{THM:main-sec1.1}~(b).

\begin{proof}[Proof of Theorem~\ref{THM:main-sec1.1}~(b)]
Let $t\ge 2$ and $n$ be an integer satisfying $(t+2)\mid n$.
It is easy to see that $\mathrm{ex}(n,\mathcal{F}_{t+2}) = \frac{t(t+1)}{6(t+2)^2}n^3$.
Let $\mathcal{H} = \Gamma_{t+2}[V_1,\cdots,V_{t+4}]$ be a blowup of $\Gamma_{t+2}$ with
$|V_1| = \cdots = |V_t| = \frac{n}{t+2}$, $|V_{t+1}| = |V_{t+4}| = \frac{\alpha n}{t+2}$,
and $|V_{t+2}| = |V_{t+3}| = \frac{(1-\alpha) n}{t+2}$, where $\alpha \in [0,1/2]$ satisfies that $\frac{\alpha n}{t+2}\in \mathbb{N}$.
It is easy to see that $|\mathcal{H}| = \frac{t(t+1)}{6(t+2)^2}n^3$.
So it suffices to show that the set $S = \left\{\alpha \in [0,1/2]\colon \frac{\alpha n}{t+2}\in \mathbb{N}\right\}$
has size at least $\frac{n}{2(t+2)}$.
Indeed, suppose that $\alpha = \frac{p}{q}\in [0,1/2]$, where $p\le q/2$ are integers that are coprime.
Notice that $\frac{p}{q}\frac{n}{t+2} \in \mathbb{N}$ if $q\mid \frac{n}{t+2}$.
Since $p$ is coprime with $q$ iff $q-p$ is coprime with $q$,
we have $|\{p\colon (p,q)=1,\ p\le q/2\}| \ge \varphi(q)/2$, where $\varphi(q)$ is the Euler totient function
that denotes the number of positive integers $p\le q$ that are coprime with $q$.
Therefore,  $|S| \ge \frac{1}{2}\sum_{q\mid \frac{n}{t+2}}\varphi(q)$.
It follows from a well known result of Gauss that $\sum_{q\mid \frac{n}{t+2}}\varphi(q) = \frac{n}{t+2}$.
Therefore, $|S| \ge \frac{n}{2(t+2)}$.
This implies that there are at least $\frac{n}{2(t+2)}$ nonisomorphic extremal $\mathcal{F}_t$-free $3$-graphs on $n$ vertices.
The proof for the case $t = 1$ is similar, so we omit it here.
\end{proof}

\section{Stability}\label{SEC:proof-stability}
\subsection{Preparations}\label{SUBSEC:prepare}
In this section we present some useful theorems and lemmas that will be used later
in the proof of Theorem~\ref{THM:main-sec1.1}~(d).

\begin{definition}[Vertex-extendibility]\label{DFN:vertex-extendable}
Let $\mathcal{F}$ be a family of $r$-graphs and let $\mathfrak{H}$ be a class of $\mathcal{F}$-free $r$-graphs.
We say that $\mathcal{F}$ is \emph{vertex-extendable}
with respect to $\mathfrak{H}$ if there exist $\zeta>0$ and $N_0\in\mathbb{N}$ such that
for every $\mathcal{F}$-free $r$-graph $\mathcal{H}$ on $n\ge N_0$
vertices satisfying $\delta(\mathcal{H})\ge \bigl(\pi(\mathcal{F})/(r-1)!-\zeta\bigr)n^{r-1}$
the following holds: if $\mathcal{H}-v$ is a subgraph of a member of $\mathfrak{H}$ for some
vertex $v\in V(\mathcal{H})$, then $\mathcal{H}$ is a subgraph of a member of $\mathfrak{H}$ as well.
\end{definition}

In~\cite{LMR2}, the authors developed a machinery that reduces the proof of stability of certain families $\mathcal{F}$
to the simpler question of checking that an $\mathcal{F}$-free hypergraph $\mathcal{H}$ with large minimum degree is vertex-extendable.

\begin{theorem}[\cite{LMR2}]\label{THM:Psi-trick:G-extendable-implies-degree-stability}
Suppose that $\mathcal{F}$ is a blowup-invariant nondegenerate family of $r$-graphs and that
$\mathfrak{H}$ is a hereditary class of $\mathcal{F}$-free $r$-graphs.
If $\mathfrak{H}$ contains all symmetrized $\mathcal{F}$-free $r$-graphs and
$\mathcal{F}$ is vertex-extendable with respect to $\mathfrak{H}$, then the following statement holds.
There exist $\epsilon>0$ and $N_0$ such that every $\mathcal{F}$-free $r$-graph on $n\ge N_0$ vertices with minimum
degree at least $\left(\pi(\mathcal{F})/(r-1)!-\epsilon\right)n^{r-1}$ is contained in $\mathfrak{H}$.
\end{theorem}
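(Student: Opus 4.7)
The plan is to combine a forward Zykov symmetrization of $\mathcal{H}$ with a backward recovery using the vertex-extendability hypothesis. Given an $\mathcal{F}$-free $r$-graph $\mathcal{H}$ on $n$ vertices with $\delta(\mathcal{H}) \ge (\pi(\mathcal{F})/(r-1)! - \epsilon) n^{r-1}$, I would first build a sequence $\mathcal{H} = \mathcal{H}_0, \mathcal{H}_1, \ldots, \mathcal{H}_k = \mathcal{H}^\ast$ of $\mathcal{F}$-free $r$-graphs in which $\mathcal{H}^\ast$ is symmetrized, and then unwind the sequence one vertex at a time using vertex-extendability to transfer membership in $\mathfrak{H}$ back from $\mathcal{H}^\ast$ to $\mathcal{H}$.

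For the forward direction, I would iteratively select a non-equivalent pair $u_i, v_i \in V(\mathcal{H}_i)$ with $d_{\mathcal{H}_i}(u_i) \ge d_{\mathcal{H}_i}(v_i)$ (note that non-equivalence forces them to be non-adjacent) and form $\mathcal{H}_{i+1}$ by replacing the link of $v_i$ with that of $u_i$. A direct check identifies $\mathcal{H}_{i+1}$ with the blowup of $\mathcal{H}_i - v_i$ in which $u_i$ is cloned into the pair $\{u_i, v_i\}$; since $\mathcal{H}_i - v_i$ is $\mathcal{F}$-free and $\mathcal{F}$ is blowup-invariant, this blowup is $\mathcal{F}$-free. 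Each step merges $v_i$ into the equivalence class of $u_i$, so the procedure terminates within $n$ iterations at a symmetrized $\mathcal{F}$-free $r$-graph $\mathcal{H}^\ast$. The hypothesis that $\mathfrak{H}$ contains every symmetrized $\mathcal{F}$-free $r$-graph then gives $\mathcal{H}^\ast \in \mathfrak{H}$.

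For the backward direction, the crucial observation is $\mathcal{H}_i - v_i = \mathcal{H}_{i+1} - v_i$, because the symmetrization step modifies only edges incident to $v_i$. Assuming inductively that $\mathcal{H}_{i+1} \in \mathfrak{H}$, hereditarity of $\mathfrak{H}$ yields $\mathcal{H}_i - v_i = \mathcal{H}_{i+1} - v_i \in \mathfrak{H}$, so $\mathcal{H}_i - v_i$ is a subgraph of a member of $\mathfrak{H}$. Provided $\delta(\mathcal{H}_i) \ge (\pi(\mathcal{F})/(r-1)! - \zeta) n^{r-1}$ and $n \ge N_0$, vertex-extendability applied at $v_i$ then implies that $\mathcal{H}_i$ is a subgraph of a member of $\mathfrak{H}$, which by hereditarity is the same as $\mathcal{H}_i \in \mathfrak{H}$. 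Iterating from $i = k - 1$ down to $i = 0$ produces $\mathcal{H} = \mathcal{H}_0 \in \mathfrak{H}$, which is the conclusion sought.

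The main obstacle is maintaining the minimum-degree hypothesis along every intermediate $\mathcal{H}_i$. A single symmetrization step leaves $d(u_i)$ unchanged and brings $d(v_i)$ up to $d(u_i)$, but for a third vertex $w$ the degree $d_{\mathcal{H}_i}(w)$ changes by $d_{\mathcal{H}_i}(u_i, w) - d_{\mathcal{H}_i}(v_i, w)$, which may be negative. With up to $n$ symmetrization steps and each drop bounded by $O(n^{r-2})$, the naive cumulative loss is on the order of $n^{r-1}$, exactly the scale of the degrees to be protected. Resolving this is the role of the $\Psi$-trick suggested by the theorem's name: one would introduce a carefully chosen potential function $\Psi(\mathcal{H}_i)$ (for instance, a combination of the edge count and a term that rewards merged equivalence classes) which is monotone along the symmetrization and whose monotonicity, together with the near-optimality of the edge count $|\mathcal{H}|$ forced by the initial minimum-degree hypothesis, ensures that no intermediate $\mathcal{H}_i$ has a vertex of degree lower than $(\pi(\mathcal{F})/(r-1)! - \zeta) n^{r-1}$, provided $\epsilon$ is chosen sufficiently small in terms of $\zeta$, $r$, and $\pi(\mathcal{F})$. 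This would close the induction and complete the proof.
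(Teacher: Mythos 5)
The statement is imported from~\cite{LMR2} and the present paper does not reprove it, so there is no in-paper argument to compare against; I will assess the proposal on its own terms.

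Your forward symmetrization and backward unwinding are both correctly set up: the observation that each symmetrization step produces a blowup of $\mathcal{H}_i - v_i$ (so blowup-invariance preserves $\mathcal{F}$-freeness), the identity $\mathcal{H}_i - v_i = \mathcal{H}_{i+1} - v_i$, and the use of hereditarity plus vertex-extendability to propagate $\mathfrak{H}$-membership backwards along the chain are all sound steps. You also correctly identify the single obstacle: vertex-extendability requires $\delta(\mathcal{H}_i) \ge (\pi(\mathcal{F})/(r-1)! - \zeta) n^{r-1}$ at \emph{every} index $i$ where it is invoked, and nothing in the construction guarantees this for the intermediate graphs.

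That obstacle is where the proposal stops being a proof. Your suggested resolution --- an unspecified potential $\Psi$, monotone under symmetrization, whose monotonicity together with near-extremality of $|\mathcal{H}_i|$ controls all intermediate minimum degrees --- is not substantiated, and the heuristic behind it is false as stated. Monotonicity of the edge count already gives $|\mathcal{H}_i| \ge |\mathcal{H}_0| \ge (\pi(\mathcal{F})/r! - O(\epsilon)) n^r$ for every $i$, but an $\mathcal{F}$-free $r$-graph can be near-extremal in edge count and still have a vertex of degree $o(n^{r-1})$ (append a low-degree vertex to an extremal example), so near-extremality alone buys nothing about $\delta(\mathcal{H}_i)$. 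Any successful $\Psi$ would have to track considerably more than the edge count, and the argument would have to explain why the cumulative degree erosion over up to $\Theta(n)$ symmetrization steps, each of which can decrease a third vertex's degree by $\Theta(n^{r-2})$, stays below $\zeta n^{r-1}$. That is precisely the nontrivial technical content of the cited result, and the proposal leaves it entirely open. Until $\Psi$ is defined and the degree control is actually proved, the backward induction cannot be launched and the argument does not establish the theorem.
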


The following lemma will be used extensively in the proof of Theorem~\ref{THM:main-sec1.1}~(d).

\begin{lemma}[see e.g. \cite{LMR1}]\label{LEMMA:greedily-embedding-Gi}
Fix a real $\eta \in (0, 1)$ and integers $m, n\ge 1$.
Let $\mathcal{G}$ be a $3$-graph with vertex set~$[m]$ and let $\mathcal{H}$ be a further $3$-graph
with $v(\mathcal{H})=n$.
Consider a vertex partition $V(\mathcal{H}) = \bigcup_{i\in[m]}V_i$ and the associated
blow-up $\widehat{\mathcal{G}} = \mathcal{G}[V_1,\ldots,V_{m}]$ of $\mathcal{G}$.
If two sets $T \subseteq [m]$ and $S\subseteq \bigcup_{j\not\in T}V_j$
have the properties
\begin{enumerate}[label=(\alph*)]
\item\label{it:47a} $|V_{j}| \ge (|S|+1)|T|\eta^{1/3} n$  for all $j \in T$,
\item\label{it:47b} $|\mathcal{H}[V_{j_1},V_{j_2},V_{j_3}]| \ge |\widehat{\mathcal{G}}[V_{j_1},V_{j_2},V_{j_3}]|
		- \eta n^3$ for all $\{j_1,j_2,j_3\} \in \binom{T}{3}$, and
\item\label{it:47c} $|L_{\mathcal{H}}(v)[V_{j_1},V_{j_2}]| \ge |L_{\widehat{\mathcal{G}}}(v)[V_{j_1},V_{j_2}]|
		- \eta n^2$ for all $v\in S$ and $\{j_1,j_2\} \in \binom{T}{2}$,
\end{enumerate}
then there exists a selection of vertices $u_j\in V_j$ for all $j\in [T]$
such that $U = \{u_j\colon j\in T\}$ satisfies
$\widehat{\mathcal{G}}[U] \subseteq \mathcal{H}[U]$ and
$L_{\widehat{\mathcal{G}}}(v)[U] \subseteq L_{\mathcal{H}}(v)[U]$ for all $v\in S$.
In particular, if $\mathcal{H} \subseteq \widehat{\mathcal{G}}$,
then $\widehat{\mathcal{G}}[U] = \mathcal{H}[U]$ and
$L_{\widehat{\mathcal{G}}}(v)[U] = L_{\mathcal{H}}(v)[U]$ for all $v\in S$.
\end{lemma}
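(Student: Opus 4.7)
The plan is to choose the vertices $u_j \in V_j$ ($j \in T$) by a standard random/averaging argument. Draw $u_j$ uniformly at random from $V_j$, independently across $j \in T$, set $U = \{u_j : j \in T\}$, and estimate the expected number of violations of the two desired inclusions; once this expectation is strictly below $1$, some realization of $U$ is violation-free and does the job.

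Two kinds of violations have to be controlled. A \emph{type (i)} violation at $\{i_1, i_2, i_3\} \subseteq T$ occurs when $\{i_1, i_2, i_3\} \in \mathcal{G}$ yet $\{u_{i_1}, u_{i_2}, u_{i_3}\} \notin \mathcal{H}$; by hypothesis~\ref{it:47b} the number of such ``bad'' triples inside $V_{i_1} \times V_{i_2} \times V_{i_3}$ is at most $\eta n^3$, so its probability is at most $\eta n^3 / (|V_{i_1}||V_{i_2}||V_{i_3}|)$. A \emph{type (ii)} violation at $(v, \{i_1, i_2\})$ with $v \in S$ and $\{i_1, i_2\} \subseteq T$ occurs when $\{u_{i_1}, u_{i_2}\} \in L_{\widehat{\mathcal{G}}}(v) \setminus L_{\mathcal{H}}(v)$; hypothesis~\ref{it:47c} bounds its probability by $\eta n^2 / (|V_{i_1}||V_{i_2}|)$. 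Invoking the size lower bound in~\ref{it:47a}, these individual probabilities are at most $1/((|S|+1)^3 |T|^3)$ and $\eta^{1/3}/((|S|+1)^2 |T|^2)$, respectively.

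A union bound over at most $\binom{|T|}{3}$ triples and at most $|S|\binom{|T|}{2}$ configurations $(v, \{i_1,i_2\})$ then bounds the expected number of violations by
\[
\frac{1}{6(|S|+1)^3} + \frac{|S|\, \eta^{1/3}}{2(|S|+1)^2}
\;\le\; \frac{1}{6} + \frac{\eta^{1/3}}{8} \;<\; 1
\]
for every $\eta \in (0, 1)$, where we used $|S|/(|S|+1)^2 \le 1/4$. Hence some choice of $U$ witnesses no violations, and for that $U$ both $\widehat{\mathcal{G}}[U] \subseteq \mathcal{H}[U]$ and $L_{\widehat{\mathcal{G}}}(v)[U] \subseteq L_{\mathcal{H}}(v)[U]$ hold for all $v \in S$. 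The ``in particular'' clause is immediate: under $\mathcal{H} \subseteq \widehat{\mathcal{G}}$ the reverse inclusions come for free, upgrading both to equalities.

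The argument is routine first-moment counting and presents no genuine obstacle; the factors $(|S|+1)|T|$ in~\ref{it:47a} are calibrated precisely so that the two sums above stay bounded by small absolute constants. The only point worth noting is that only edges of $\mathcal{G}$ contribute to the type~(i) count and only edges of $L_{\mathcal{G}}(k)$ (where $k \notin T$ is the index with $v \in V_k$) contribute to type~(ii), since on non-edges the blow-up $\widehat{\mathcal{G}}$ is empty and hypotheses~\ref{it:47b},\ref{it:47c} impose no constraint there.
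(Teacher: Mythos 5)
Your argument is correct and is the standard first-moment/random-selection proof of this type of lemma (the paper itself gives no proof here, citing~\cite{LMR1}, where the corresponding lemma is established by essentially the same counting). The only point that deserves an explicit sentence, which you note only in passing at the end, is that conditions~\ref{it:47b} and~\ref{it:47c} bound $|\widehat{\mathcal{G}}[V_{i_1},V_{i_2},V_{i_3}] \setminus \mathcal{H}[V_{i_1},V_{i_2},V_{i_3}]|$ and $|L_{\widehat{\mathcal{G}}}(v)[V_{i_1},V_{i_2}] \setminus L_{\mathcal{H}}(v)[V_{i_1},V_{i_2}]|$ directly because, on an edge $\{i_1,i_2,i_3\}\in\mathcal{G}$ (resp.\ an edge of $L_{\mathcal{G}}(k)$), the blow-up $\widehat{\mathcal{G}}$ restricted to $V_{i_1}\times V_{i_2}\times V_{i_3}$ is the full complete tripartite 3-graph and thus \emph{contains} the corresponding restriction of $\mathcal{H}$, turning the cardinality inequality into a bound on the set difference; this containment is what licenses the probability estimates $\eta n^3/(|V_{i_1}||V_{i_2}||V_{i_3}|)$ and $\eta n^2/(|V_{i_1}||V_{i_2}|)$. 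With that understood, your union bound
\[
\frac{1}{6(|S|+1)^3} + \frac{|S|\,\eta^{1/3}}{2(|S|+1)^2} \le \frac{1}{6} + \frac{\eta^{1/3}}{8} < 1
\]
is accurate (the estimate $|S|/(|S|+1)^2 \le 1/4$ is tight at $|S|=1$), so some realization of $U$ is free of violations, and the ``in particular'' clause is indeed immediate from $\mathcal{H}\subseteq\widehat{\mathcal{G}}$. Whether one phrases this as a random sample (as you do) or as a greedy one-vertex-at-a-time selection with shrinking candidate pools (as the label of the lemma and the source~\cite{LMR1} suggest), the underlying counting is the same; your version is arguably cleaner to state, while the greedy version makes the role of condition~\ref{it:47a} (guaranteeing each candidate pool stays nonempty after discarding bad vertices) more visibly structural.
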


Next we prove some basic properties about $\Gamma_{t+2}$.

\begin{observation}\label{OBS:structure-link}
The following statements hold.
\begin{enumerate}[label=(\alph*)]
\item If a set $U\subset [t+4]$ satisfies that $\Gamma_{t+2}[U]\cong K_{t+2}^3$,
        then either $U = [t]\cup \{t+1,t+4\}$ or $U = [t]\cup \{t+2,t+3\}$.
\item 
        We have $L_{\Gamma_{t+2}}(1) \cong \cdots \cong L_{\Gamma_{t+2}}(t)$, and $L_{\Gamma_{t+2}}(t+1) \cong \cdots \cong L_{\Gamma_{t+2}}(t+4).$
\item We have
\begin{align}
d_{\Gamma_{t+2}}(i,j) =
\begin{cases}
t+2 & \text{if } \{i,j\} \in \binom{[t]}{2}, \\
t+1 & \text{if } i\in [t] \text{ and } j\in [t+1,t+4], \\
t   & \text{if } \{i,j\} \in \{\{t+1,t+4\},\{t+2,t+3\}\}, \\
t-1 & \text{if } \{i,j\} \in \{\{t+1,t+3\},\{t+2,t+4\}\}, \\
1   & \text{if } \{i,j\} \in \{\{t+1,t+2\},\{t+3,t+4\}\}.
\end{cases} \notag
\end{align}
\end{enumerate}
\end{observation}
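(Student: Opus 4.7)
The proof is a direct computation once the edges of $\Gamma_{t+2}$ are explicitly enumerated. I would split them into four categories reflecting the crossed blowup construction: (A) the edges of $K_{t+2}^3$ not containing $\{t+1,t+2\}$, namely $\binom{[t]}{3}$ together with $\{\{i,j,v\}\colon\{i,j\}\in\binom{[t]}{2},\ v\in\{t+1,t+2\}\}$; (B) the cloned edges $\{\{i,j,v\}\colon\{i,j\}\in\binom{[t]}{2},\ v\in\{t+3,t+4\}\}$; (C) the Step-3 edges $\{i,t+1,t+2\},\{i,t+1,t+4\},\{i,t+3,t+2\},\{i,t+3,t+4\}$ for $i\in[t-1]$; and (D) the Step-3 edges $\{t,t+1,t+3\},\{t,t+1,t+4\},\{t,t+2,t+3\},\{t,t+2,t+4\}$. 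The crucial structural feature visible from this list is that no edge of $\Gamma_{t+2}$ is entirely contained in $\{t+1,t+2,t+3,t+4\}$.

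For part~(c), each codegree is obtained by counting how many triples from this list contain a given pair. For $\{i,j\}\in\binom{[t]}{2}$ the third vertex may be any element of $[t]\setminus\{i,j\}$ (contributing $t-2$ from $\binom{[t]}{3}$), any element of $\{t+1,t+2\}$ (two more from (A)), or any element of $\{t+3,t+4\}$ (two more from (B)), totalling $t+2$; the case $\{i,v\}$ with $i\in[t]$ and $v\in\{t+1,\ldots,t+4\}$ is handled by an entirely analogous count giving $t+1$; and for pairs inside $\{t+1,\ldots,t+4\}$ one simply reads off how many of the edges in (C) and (D) contain that pair. Part~(a) then follows quickly: since no edge of $\Gamma_{t+2}$ lies inside $\{t+1,\ldots,t+4\}$, any set $U$ with $|U|=t+2$ and $\Gamma_{t+2}[U]\cong K_{t+2}^3$ must satisfy $|U\cap\{t+1,\ldots,t+4\}|\le2$, hence $[t]\subseteq U$; a direct check of the six choices for the remaining pair then shows that $U$ induces $K_{t+2}^3$ only when that pair is $\{t+1,t+4\}$ or $\{t+2,t+3\}$, and both of these choices can be verified to work by enumeration.

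For part~(b), I would identify the automorphism group of $\Gamma_{t+2}$. Any permutation of $[t-1]$ preserves categories (A)--(D) and hence is an automorphism, yielding $L(1)\cong\cdots\cong L(t-1)$. Moreover, the Klein four-group on $\{t+1,t+2,t+3,t+4\}$ generated by the double transpositions $t+1\leftrightarrow t+3,\,t+2\leftrightarrow t+4$ and $t+1\leftrightarrow t+2,\,t+3\leftrightarrow t+4$ also preserves $\Gamma_{t+2}$ (as can be checked category by category: the first swap interchanges the $t+1,t+2$ portion of (A) with (B) while permuting (C) and (D) within themselves, and the second permutes each category within itself) and acts transitively on $\{t+1,\ldots,t+4\}$, giving $L(t+1)\cong L(t+2)\cong L(t+3)\cong L(t+4)$. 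The remaining isomorphism $L(t-1)\cong L(t)$ is not induced by an automorphism, since $t=u_k$ plays a distinguished role in (D); here I would compute both links explicitly and observe that each is isomorphic to a clique $K_{t-1}$ completely joined to a $4$-cycle on $\{t+1,\ldots,t+4\}$, so that an explicit relabeling such as $t\leftrightarrow t-1$ on the clique side together with $t+2\leftrightarrow t+3$ on the 4-cycle side supplies the desired isomorphism. The main bookkeeping obstacle throughout is tracking how the distinguished role of $u_k=t$ in Step~3 breaks the apparent symmetry between $\{v_1,v_2\}$ and $\{v_1',v_2'\}$, but once categories (C) and (D) are tabulated side by side this asymmetry becomes transparent and all three parts reduce to routine verifications.
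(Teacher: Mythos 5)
Your overall plan --- enumerate the edges of $\Gamma_{t+2}$ explicitly, count the triples through each pair for part~(c), deduce part~(a) from the fact that no edge lies entirely inside $\{t+1,\ldots,t+4\}$, and exhibit the Klein four-group of automorphisms together with one extra link isomorphism for part~(b) --- is exactly the right approach, and your treatment of~(a),~(b), and the first two codegree classes of~(c) is correct.

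However, the last step of~(c) hides a genuine gap. You say that for pairs inside $\{t+1,\ldots,t+4\}$ one ``simply reads off'' the codegree from (C) and (D), but carrying this out with the lists you wrote yields $d(t+1,t+2)=d(t+3,t+4)=t-1$ (realized by the $t-1$ triples $\{i,t+1,t+2\}$ resp.\ $\{i,t+3,t+4\}$ in your (C)) and $d(t+1,t+3)=d(t+2,t+4)=1$ (realized only by triples in your (D)), which is precisely the reverse of what Observation~\ref{OBS:structure-link}(c) asserts. Your lists transcribe the paper's written definition of the crossed blowup verbatim, which assigns the link $K_{2,2}$ with parts $\{v_1,v_1'\},\{v_2,v_2'\}$ to each of $u_1,\ldots,u_{k-1}$ and the other bipartition $\{v_1,v_2\},\{v_1',v_2'\}$ to $u_k$ alone; but the values in~(c) and the codegree table inside the proof of Lemma~\ref{LEMMA:gamma-t-partial-embedding} (which requires $d_F(t,t+1)=t+1$ with $F=\Gamma_{t+2}\setminus\{t+3\}$, forcing $\{t,t+1,t+2\}$ to be an edge) unambiguously require the opposite assignment. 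What is actually needed is
\begin{align*}
\text{(C)} &=\bigl\{\{i,t+1,t+3\},\{i,t+1,t+4\},\{i,t+2,t+3\},\{i,t+2,t+4\}\colon i\in[t-1]\bigr\},\\
\text{(D)} &=\bigl\{\{t,t+1,t+2\},\{t,t+1,t+4\},\{t,t+2,t+3\},\{t,t+3,t+4\}\bigr\},
\end{align*}
which differs from your lists exactly by the relabeling $t+2\leftrightarrow t+3$. With this correction the codegrees inside $\{t+1,\ldots,t+4\}$ come out as stated, and the rest of your argument --- no edge inside $\{t+1,\ldots,t+4\}$, the six-case check for~(a), the Klein four-group and the $L(t-1)\cong L(t)$ isomorphism for~(b) --- goes through verbatim, since it is insensitive to this relabeling.
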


Using Observation~\ref{OBS:structure-link} (a) we prove the following lemma.

\begin{lemma}\label{LEMMA:two-K_t+2}
Suppose that $\mathcal{G}$ is a blowup of $\Gamma_{t+2}$ and $U,U'\subset V(\mathcal{G})$
are two vertex sets of size $t+2$ with $|U\cap U'| = t+1$.
If $\mathcal{G}[U] \cong \mathcal{G}[U'] \cong K_{t+2}^3$,
then the set $U\triangle U'$ is not contained in any edge of $\mathcal{G}$.
\end{lemma}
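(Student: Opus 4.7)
The plan is to lift the problem to $\Gamma_{t+2}$ via the blowup map. Let $\psi\colon V(\mathcal{G})\to V(\Gamma_{t+2})$ be a map witnessing that $\mathcal{G}$ is a blowup of $\Gamma_{t+2}$, so for every triple $E\subset V(\mathcal{G})$ we have $\psi(E)\in \Gamma_{t+2}$ iff $E\in \mathcal{G}$. Because $\Gamma_{t+2}$ is $3$-uniform, the image $\psi(E)$ of any edge $E\in\mathcal{G}$ is a $3$-element set; equivalently, if two vertices $u\neq u'$ satisfy $\psi(u)=\psi(u')$, then no triple containing both of them lies in $\mathcal{G}$.

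I would first argue that $\psi$ is injective on $U$ and on $U'$. Indeed, since $\mathcal{G}[U]\cong K_{t+2}^3$ every pair inside $U$ is contained in some edge of $\mathcal{G}$, so the previous remark rules out $\psi$-collisions in $U$; the same goes for $U'$. Consequently, $\psi(U)$ and $\psi(U')$ are $(t+2)$-element subsets of $V(\Gamma_{t+2})$ that span copies of $K_{t+2}^3$ inside $\Gamma_{t+2}$, and by Observation~\ref{OBS:structure-link}~(a) each of them equals $T_1:=[t]\cup\{t+1,t+4\}$ or $T_2:=[t]\cup\{t+2,t+3\}$. A direct check yields $|T_1\cap T_2|=t$.

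Write $\{a\}=U\setminus U'$ and $\{b\}=U'\setminus U$, so that $U\triangle U'=\{a,b\}$. Because $\psi$ is injective on $U\cap U'\subset U$, we have $|\psi(U\cap U')|=t+1$ and $\psi(U\cap U')\subseteq \psi(U)\cap\psi(U')$. If $\psi(U)\neq \psi(U')$, then $\{\psi(U),\psi(U')\}=\{T_1,T_2\}$, forcing $|\psi(U)\cap\psi(U')|=t<t+1$, which contradicts the preceding inclusion. Hence $\psi(U)=\psi(U')$, so both $\psi(a)$ and $\psi(b)$ lie in the singleton $\psi(U)\setminus\psi(U\cap U')$; this gives $\psi(a)=\psi(b)$. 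Any triple $E\supseteq\{a,b\}$ then has $|\psi(E)|\le 2$, so $\psi(E)\notin\Gamma_{t+2}$ and $E\notin\mathcal{G}$, as required.

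The argument is essentially a counting/intersection computation once Observation~\ref{OBS:structure-link}~(a) is available, so any genuine difficulty lies in establishing that classification of $(t+2)$-cliques inside $\Gamma_{t+2}$ rather than in the present deduction. The only care needed here is the injectivity of $\psi$ on $U$ and $U'$, which in turn rests on the fact that $K_{t+2}^3$ is $2$-covered.
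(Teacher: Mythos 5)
Your proposal is correct and follows the same strategy as the paper's proof: restrict the blowup map to $U\cup U'$, apply Observation~\ref{OBS:structure-link}~(a) to identify the two possible images of each copy of $K_{t+2}^3$, and conclude from $|U\cap U'|=t+1>t=|T_1\cap T_2|$ that both copies have the same image, forcing the two vertices of $U\triangle U'$ into the same blowup class. The only difference is expository: you spell out the injectivity of $\psi$ on $U,U'$ and the intersection count explicitly, whereas the paper leaves these steps implicit.
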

\begin{proof}
Suppose that $U=\{u_1,\ldots,u_{t+1},u\}$ and $U'=\{u_1,\ldots,u_{t+1},u'\}$.
Let $\phi\colon U\cup U' \to V(\Gamma_{t+2})$ be a map such that $\phi|_{U}$ and $\phi|_{U'}$
are homomorphisms from $\mathcal{G}[U]$ and $\mathcal{G}[U']$ to $\Gamma_{t+2}$, respectively.
Since $|U\cap U'| = t+1$, it follows from Observation~\ref{OBS:structure-link} (a)
that either $\phi(U) = \phi(U') = [t]\cup \{t+1,t+4\}$ or $\phi(U) = \phi(U') = [t]\cup \{t+2,t+3\}$.
In either case, we have $\phi(u) = \phi(u')$, which implies that $u$ and $u'$ lie in the same part of $\mathcal{G}$.
So $\{u,u'\}$ is not contained in any edge of $\mathcal{G}$.
\end{proof}

The following lemma follows easily from Observation~\ref{OBS:structure-link} (c).

\begin{lemma}\label{LEMMA:gamma-t-automorphism}
Suppose that $\phi\colon V(\Gamma_{t+2})\to V(\Gamma_{t+2})$ is a homomorphism from $\Gamma_{t+2}$ to $\Gamma_{t+2}$.
Then $\phi$ is bijective,
$\phi([t]) = [t]$, and $$\left\{ \phi(\{t+1,t+4\}),\phi(\{t+2,t+3\}) \right\} = \left\{ \{t+1,t+4\},\{t+2,t+3\} \right\}.$$
If $t\ge 3$, then additionally, $\phi([t-1]) = [t-1]$.
%
\end{lemma}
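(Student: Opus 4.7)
The plan is to extract everything from two simple facts: (i) $\Gamma_{t+2}$ is $2$-covered, which forces any homomorphism to be injective, and (ii) a bijective homomorphism of a $3$-graph to itself must preserve codegrees exactly. Once these are in hand, each part of the lemma becomes a direct reading of Observation~\ref{OBS:structure-link}~(c).

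For bijectivity, Proposition~\ref{PROP:nonminimal-corss-blowup}~(b) together with the fact that $K_{t+2}^{3}$ is $2$-covered gives that every pair of distinct vertices of $\Gamma_{t+2}$ lies in a common edge; hence $\phi(u)=\phi(v)$ for distinct $u,v$ would collapse such an edge to a pair, contradicting that $\phi$ is a homomorphism. So $\phi$ is an injection on a finite vertex set, and therefore a bijection. For codegree preservation I would note that each edge containing $\{u,v\}$ maps under $\phi$ to an edge containing $\{\phi(u),\phi(v)\}$, with distinct pre-images giving distinct images by injectivity. Thus $d(\phi(u),\phi(v))\ge d(u,v)$ for every pair, and summing over $\binom{V(\Gamma_{t+2})}{2}$ — where $\phi$ acts as a permutation — gives equal totals on both sides, forcing pairwise equality $d(\phi(u),\phi(v))= d(u,v)$.

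Parts (ii) and (iii) are now immediate. By Observation~(c), codegree $t+2$ is attained exactly on pairs inside $\binom{[t]}{2}$, so the set of vertices lying in some codegree-$(t+2)$ pair is precisely $[t]$, and this set is $\phi$-invariant by codegree preservation. Consequently $\phi$ restricts to a bijection on $[t+1,t+4]$, and inside $\binom{[t+1,t+4]}{2}$ codegree $t$ is attained only at $\{t+1,t+4\}$ and $\{t+2,t+3\}$, yielding $\{\phi(\{t+1,t+4\}),\phi(\{t+2,t+3\})\}=\{\{t+1,t+4\},\{t+2,t+3\}\}$.

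For the final part under $t\ge 3$, I would distinguish vertex $t$ from the vertices of $[t-1]$ through their links. By definition of the crossed blowup, $L(i)$ for each $i\in[t-1]$ contains the uncrossed $K_{2,2}$ with parts $\{t+1,t+3\}$ and $\{t+2,t+4\}$ — in particular the two pairs $\{t+1,t+2\}$ and $\{t+3,t+4\}$ of codegree $1$ — while $L(t)$ contains the crossed $K_{2,2}$ with parts $\{t+1,t+2\}$ and $\{t+3,t+4\}$, thus the two pairs $\{t+1,t+3\}$ and $\{t+2,t+4\}$ of codegree $t-1$. Because $t-1\ne 1$ when $t\ge 3$, codegree preservation forces $\phi$ to map the set of codegree-$1$ pairs in $\binom{[t+1,t+4]}{2}$ to itself; hence for every $i\in[t-1]$ the link $L(\phi(i))$ contains both codegree-$1$ pairs. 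No vertex of $[t+1,t+4]$ has any pair of $\binom{[t+1,t+4]}{2}$ in its link (there is no edge of $\Gamma_{t+2}$ lying entirely inside $[t+1,t+4]$), and among $[t]$ only vertices in $[t-1]$ have both codegree-$1$ pairs, so $\phi(i)\in[t-1]$ for every $i\in[t-1]$. Bijectivity upgrades this inclusion to $\phi([t-1])=[t-1]$. The only (minor) obstacle is bookkeeping: one must match the uncrossed-versus-crossed $K_{2,2}$ dichotomy in Step~(c) of the $\boxplus$ definition with the codegree values listed in Observation~(c), after which the argument is purely mechanical.
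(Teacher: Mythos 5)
Your proof is correct and follows the same route the paper intends when it says the lemma ``follows easily from Observation~\ref{OBS:structure-link}~(c)'': derive bijectivity from $2$-coveredness, then read off each statement from codegree preservation. The argument for $\phi([t-1])=[t-1]$ is also sound---once you know $\phi([t])=[t]$, it suffices to note that the link of $t$ inside $[t+1,t+4]$ is the crossed $K_{2,2}$ (containing the codegree-$(t-1)$ pairs $\{t+1,t+3\},\{t+2,t+4\}$) while the links of $[t-1]$ contain the codegree-$1$ pairs $\{t+1,t+2\},\{t+3,t+4\}$, and these are distinguishable precisely when $t-1\neq 1$, i.e. $t\ge 3$.
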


Using Observation~\ref{OBS:structure-link} (c) we further prove the following lemma.

\begin{lemma}\label{LEMMA:gamma-t-partial-embedding}
Suppose that $\phi\colon V(\Gamma_{t+2})\setminus \{t+3\} \to V(\Gamma_{t+2})$
is a homomorphism from $\Gamma_{t+2}\setminus \{t+3\}$ to $\Gamma_{t+2}$.
Then $\phi$ is injective, $\phi([t]) = [t]$, $\phi(\{t+1,t+2,t+4\})\subset [t+1,t+4]$,
and $\{\phi(t+1),\phi(t+4)\}\in \{\{t+1,t+4\},\{t+2,t+3\}\}$.
If $t\ge 3$, then additionally, $\phi([t-1]) = [t-1]$.
%
\end{lemma}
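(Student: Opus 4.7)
The plan is to establish the lemma's five conclusions in sequence, using the codegree list of $\Gamma_{t+2}$ in Observation~\ref{OBS:structure-link}~(c) as the main tool. First I note that $[t]\cup\{t+1,t+4\}$ spans a copy of $K_{t+2}^{3}$ inside $\Gamma_{t+2}\setminus\{t+3\}$; its image under $\phi$ is a $K_{t+2}^{3}$ inside $\Gamma_{t+2}$, so by Observation~\ref{OBS:structure-link}~(a) it equals either $[t]\cup\{t+1,t+4\}$ (Case~A) or $[t]\cup\{t+2,t+3\}$ (Case~B), and $\phi$ is injective on $[t]\cup\{t+1,t+4\}$. To extend injectivity to $t+2$, I use the edges $\{i,j,t+2\}$ for $\{i,j\}\in\binom{[t]}{2}$, $\{i,t+1,t+2\}$ for $i\in[t-1]$, and $\{t,t+2,t+4\}$, all lying in the domain: each maps to a $3$-element edge, and together they force $\phi(t+2)$ outside $\phi([t]\cup\{t+1,t+4\})$. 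Hence $\phi$ is injective.

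For the crucial step, I establish $\phi([t])=[t]$ via codegrees. For any $\{i,j\}\in\binom{[t]}{2}$ the codegree in $\Gamma_{t+2}\setminus\{t+3\}$ equals $t+1$, and injectivity yields $d_{\Gamma_{t+2}}(\phi(i),\phi(j))\ge t+1$; by the codegree list, only pairs in $\binom{[t]}{2}\cup([t]\times[t+1,t+4])$ achieve this, so at least one of $\phi(i),\phi(j)$ lies in $[t]$. Therefore $|\phi([t])\cap[t+1,t+4]|\le 1$, since two $\phi$-images inside $[t+1,t+4]$ would form a pair of codegree at most $t$. The residual ``mixed'' case in which exactly one $i_0\in[t]$ satisfies $\phi(i_0)\in[t+1,t+4]$ is eliminated by a subcase analysis: combining the image constraints from Case~A/B with the small-codegree pairs $\{t+1,t+2\},\{t+1,t+3\},\{t+2,t+4\},\{t+3,t+4\}$ and the non-edge $\{t,t+1,t+2\}$, the edges $\{i_0,j,t+2\}$ together with $\{t,t+2,t+4\}$ push the values of $\phi(t),\phi(t+4)$ into contradictory positions. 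This establishes $\phi([t])\subset[t]$, whence $\phi([t])=[t]$.

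Having secured $\phi([t])=[t]$, injectivity forces $\phi(\{t+1,t+2,t+4\})\subset[t+1,t+4]$, and combining with the image constraint of the first step yields $\{\phi(t+1),\phi(t+4)\}\in\{\{t+1,t+4\},\{t+2,t+3\}\}$. Finally, for $t\ge 3$, the edges $\{i,t+1,t+2\}$ for $i\in[t-1]$ map to edges whose third vertex lies in the neighborhood of $\{\phi(t+1),\phi(t+2)\}$; in each of the four subcases determined by $\{\phi(t+1),\phi(t+4)\}$ and $\phi(t+2)$, this neighborhood is either $[t-1]$ (codegree $t-1$), forcing $\phi(i)\in[t-1]$, or a singleton (codegree $1$), which is immediately impossible since $|[t-1]|\ge 2$. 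Together with the bijectivity of $\phi|_{[t]}$, this gives $\phi([t-1])=[t-1]$.

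The main obstacle is the mixed-case elimination in the third step: although the codegree bound rules out the bulk of possibilities in one stroke, verifying that every remaining subcase produces a contradiction requires careful bookkeeping of the codegree-$1$ pairs $\{t+1,t+3\},\{t+2,t+4\}$ and the non-edge $\{t,t+1,t+2\}$, which are the only places where the structural asymmetry of the crossed blowup (the special treatment of $u_t$ in step~(c) of the construction) is sharp enough to derail the would-be mixed homomorphism.
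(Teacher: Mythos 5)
Your proposal follows essentially the same route as the paper: first locate the image of the $K_{t+2}^3$ spanned by $[t]\cup\{t+1,t+4\}$ via Observation~\ref{OBS:structure-link}~(a), then use codegree comparisons between $F=\Gamma_{t+2}\setminus\{t+3\}$ and $\Gamma_{t+2}$ to pin down $\phi([t])$, and finally sort out the last few vertices. The engine — codegree preservation under injective homomorphisms together with the codegree list in Observation~\ref{OBS:structure-link}~(c) — is identical. The main stylistic divergence is that the paper invokes the automorphism of $\Gamma_{t+2}$ swapping $t+1\leftrightarrow t+4$, $t+2\leftrightarrow t+3$ to normalize to $\phi(t+2)=t+2$ before running the codegree argument, whereas you carry the full case analysis explicitly; both work.

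Two places deserve tightening. Your elimination of the mixed case ($\phi(i_0)\in[t+1,t+4]$ for exactly one $i_0$) is described loosely: the clean argument is simply that $d_F(i_0,t+2)=t$ for every $i_0\in[t]$, while $\phi(i_0)\in\{t+1,t+4\}$ and $\phi(t+2)\in\{t+2,t+3\}$ (or their Case-B analogues) force $d_{\Gamma_{t+2}}(\phi(i_0),\phi(t+2))\le t-1$, contradicting injectivity of $\phi$ on $N_F(i_0,t+2)$. The phrasing about ``pushing $\phi(t),\phi(t+4)$ into contradictory positions'' obscures that the contradiction lives in the codegree of the single pair $(i_0,t+2)$, which is how the paper closes this step. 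For $\phi([t-1])=[t-1]$ you use a codegree-pigeonhole argument through $N_{\Gamma_{t+2}}(\phi(t+1),\phi(t+2))$; the paper instead compares the restricted links $L_F(i)[S]$ with $S=\{t+1,t+2,t+4\}$, which is a mildly different packaging of the same information.

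Finally, a word about the specific codegree values you quote. You take $\{i,t+1,t+2\}\in\Gamma_{t+2}$ for all $i\in[t-1]$ and $\{t,t+1,t+2\}$ a non-edge, so $d_{\Gamma_{t+2}}(t+1,t+2)=t-1$. This is exactly what the crossed-blowup definition yields under the convention stated in the paper's remark (``$t+3,t+4$ are clones of $t+1,t+2$''), but it disagrees with Observation~\ref{OBS:structure-link}~(c) as printed, which records $d_{\Gamma_{t+2}}(t+1,t+2)=1$ and $d_{\Gamma_{t+2}}(t+1,t+3)=t-1$; the observation is computed under the alternative relabeling in which $t+2$ is the clone of $t+1$. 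This is an internal labeling mismatch in the paper rather than a flaw in your argument, and since the lemma's conclusions are invariant under that relabeling, your proof establishes the stated result.
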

\begin{proof}
Let $F = \Gamma_{t+2}\setminus\{t+3\}$.
Since $\phi$ is an embedding, we have $d_{F}(i,j) \le d_{\Gamma_{t+2}}(\phi(i),\phi(j))$ for all pairs of vertices in $F$.
Observe that
\begin{align}
d_{F}(i,j) =
\begin{cases}
t+1 & \text{if } \{i,j\} \in \binom{[t]}{2} \text{ or } (i,j)\in [t-1]\times\{t+4\} \text{ or } (i,j) = (t,t+1), \\
t & \text{if } (i,j)\in [t-1] \times \{t+1,t+2\} \text{ or } (i,j)\in \{t\}\times \{t+2,t+4\}\\
 & \text{or } \{i,j\} =\{t+1,t+4\}, \\
t-1 & \text{if } \{i,j\} =\{t+2,t+4\}, \\
1   & \text{if } \{i,j\} =\{t+1,t+2\}.
\end{cases} \notag
\end{align}
Since the induced subgraph of $F$ on $[t]\cup \{t+1,t+4\}$ is isomorphic to $K_{t+2}^{3}$,
it follows from from Observation~\ref{OBS:structure-link} (a) that
$\phi([t]\cup \{t+1,t+4\}) \in \{[t]\cup \{t+1,t+4\},[t]\cup \{t+2,t+3\}\}$.
By symmetry we may assume that $\phi([t]\cup \{t+1,t+4\}) = [t]\cup \{t+1,t+4\}$.
Then we have $\phi(t+2) \in [t+1,t+4]\setminus \phi([t]\cup \{t+1,t+4\}) = \{t+2,t+3\}$
By symmetry we may assume that $\phi(t+2) = t+2$.
Since $d_{\Gamma_{t+2}}(\phi(t+2),t+1) = 1$ and $d_{\Gamma_{t+2}}(\phi(t+2),t+4) = t-1$,
we have $\phi(t+1) = t+1$ and $\phi(t+4) = t+4$
(otherwise we would have $\phi(i)\in \{t+1,t+4\}$ for some $i\in [t]$,
but this implies that $t = d_{F}(i,t+2) \le d_{\Gamma_{t+2}}(\phi(i),\phi(t+2)) \le t-1$, a contradiction).
Consequently, $\phi([t]) = [t]$.

Suppose that $t\ge 3$.
Let $S = \{t+1,t+2,t+4\}$
Observe that $L_{F}(i)[S] = L_{F}(j)[S] \neq L_{F}(t)(S)$ and
$L_{\Gamma_{t+2}}(i)[\phi(S)] = L_{\Gamma_{t+2}}(j)[\phi(S)] \neq L_{\Gamma_{t+2}}(t)(\phi(S))$ and for all $i,j\in [t-1]$.
So, we should have $\phi([t-1]) = [t-1]$ and $\phi(t) = \phi(t)$.
\end{proof}

The following lemma characterizes the vectors $(x_1,\ldots, x_{t+4}) \in \Delta_{t+3}$ for which
the value $p_{\Gamma_{t+2}}(x_1,\ldots, x_{t+4})$ is close to the maximum.

\begin{lemma}\label{LEMMA:Lagrangian-stability}
For every integer $t\ge 2$ and every real number $\epsilon>0$ there exists a constant $\delta>0$ such that
if a vector $\vec{x}\in \Delta_{t+3}$ satisfies $p_{\Gamma_{t+2}}(x_1,\ldots, x_{t+4}) \ge \lambda(K_{t+2}^3) - \delta$,
then there exists a real number $\alpha \in [0,1]$ such that
\begin{align}\label{equ:weight-stable}
x_i =
\begin{cases}
\frac{1}{t+2} \pm \epsilon & \text{if } i\in [t], \\
\frac{\alpha}{t+2} \pm \epsilon & \text{if } i\in \{t+1, t+4\},\\
\frac{1-\alpha}{t+2} \pm \epsilon & \text{if } i\in\{t+2, t+3\}.
\end{cases}
\end{align}
\end{lemma}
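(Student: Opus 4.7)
My plan is to pin down $Z(\Gamma_{t+2})$ exactly, after which the lemma follows by a standard compactness argument on the simplex. I would exploit the decomposition of $p_{\Gamma_{t+2}}$ arising from $\Gamma_{t+2}=K_{t+2}^3\boxplus\{t+1,t+2\}$, exactly as in the proof of Proposition~\ref{PROP:nonminimal-corss-blowup+}. Writing $\widehat{X}_1=X_{t+1}+X_{t+3}$, $\widehat{X}_2=X_{t+2}+X_{t+4}$, and $W=\widehat{X}_1+\widehat{X}_2$, this yields
\begin{align}
p_{\Gamma_{t+2}}(\vec{X})=e_3(X_1,\dots,X_t)+W\,e_2(X_1,\dots,X_t)+p_4\,\widehat{X}_1\widehat{X}_2+p_5\,(X_{t+1}+X_{t+2})(X_{t+3}+X_{t+4}),\notag
\end{align}
with $p_4=X_1+\cdots+X_{t-1}$ and $p_5=X_t$. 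Two applications of AM--GM give $\widehat{X}_1\widehat{X}_2\le (W/2)^2$ and $(X_{t+1}+X_{t+2})(X_{t+3}+X_{t+4})\le (W/2)^2$; combining with $p_4+p_5=X_1+\cdots+X_t$ gives $p_{\Gamma_{t+2}}(\vec{X})\le p_{K_{t+2}^3}(X_1,\dots,X_t,W/2,W/2)\le \lambda(K_{t+2}^3)$, where the final step uses the standard fact that $p_{K_{t+2}^3}$ attains its unique maximum on $\Delta_{t+1}$ at the uniform vector.

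If $p_{\Gamma_{t+2}}(\vec{X})=\lambda(K_{t+2}^3)$, the uniqueness of the maximizer of $p_{K_{t+2}^3}$ forces $X_i=1/(t+2)$ for all $i\in[t]$ and $W=2/(t+2)$. Because $t\ge 2$, we have $p_4=(t-1)/(t+2)>0$ and $p_5=1/(t+2)>0$, so both AM--GM inequalities must be equalities; this forces $\widehat{X}_1=\widehat{X}_2=1/(t+2)$ together with $X_{t+1}+X_{t+2}=X_{t+3}+X_{t+4}=1/(t+2)$. Solving this $2\times 2$ linear system with $\alpha:=(t+2)X_{t+1}$ gives $X_{t+1}=X_{t+4}=\alpha/(t+2)$ and $X_{t+2}=X_{t+3}=(1-\alpha)/(t+2)$ for some $\alpha\in[0,1]$. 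Consequently $Z(\Gamma_{t+2})$ is exactly the $\epsilon=0$ case of (\ref{equ:weight-stable}), and $\lambda(\Gamma_{t+2})=\lambda(K_{t+2}^3)$.

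The stability statement now follows by compactness. If the lemma failed for some $\epsilon>0$, there would be a sequence $(\vec{X}^{(k)})\subset\Delta_{t+3}$ with $p_{\Gamma_{t+2}}(\vec{X}^{(k)})\to\lambda(K_{t+2}^3)$ such that no $\alpha\in[0,1]$ witnesses (\ref{equ:weight-stable}) for $\vec{X}^{(k)}$. A convergent subsequence, combined with continuity of $p_{\Gamma_{t+2}}$, yields a limit $\vec{X}^{*}\in Z(\Gamma_{t+2})$, and so $\vec{X}^{*}$ has the form above for some $\alpha^{*}$; but then the entries of $\vec{X}^{(k)}$ converge to those of $\vec{X}^{*}$ and (\ref{equ:weight-stable}) eventually holds with $\alpha=\alpha^{*}$, a contradiction. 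The main obstacle is the algebraic bookkeeping that pins down $Z(\Gamma_{t+2})$; in particular, the AM--GM equality analysis relies crucially on the hypothesis $t\ge 2$, which guarantees that both $p_4$ and $p_5$ are strictly positive at any maximizer and hence both symmetry constraints on the last four coordinates must hold.
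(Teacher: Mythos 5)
Your proof is correct, but it takes a genuinely different route from the paper's. You and the paper both start from the same decomposition of $p_{\Gamma_{t+2}}$ coming from the crossed-blowup structure (with $p_4=X_1+\cdots+X_{t-1}$ and $p_5=X_t$), and both reduce to comparison with $p_{K_{t+2}^3}$ evaluated with the last two coordinates averaged. From there, however, you characterize $Z(\Gamma_{t+2})$ qualitatively via the equality cases of AM--GM (using $p_4,p_5>0$ at any maximizer, which is where $t\ge 2$ enters) plus the uniqueness of the uniform maximizer of $p_{K_{t+2}^3}$, and then finish with a soft compactness argument on $\Delta_{t+3}$. The paper instead runs the same comparison \emph{quantitatively}: it uses the identity $xy=\bigl(\tfrac{x+y}{2}\bigr)^2-\bigl(\tfrac{x-y}{2}\bigr)^2$ to keep track of the AM--GM loss explicitly, invokes a quantitative stability inequality for $p_{K_{t+2}^3}$ (their Lemma 2.2 from~\cite{LMR2}), and thereby extracts an explicit $\delta=(\epsilon/30t)^2$ together with explicit coordinate bounds. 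So what you gain is brevity and conceptual clarity; what you lose is an effective dependence of $\delta$ on $\epsilon$ and $t$ (though the paper itself only ever uses the bare existence of $\delta$, so nothing downstream breaks). Two small remarks: you assert that $Z(\Gamma_{t+2})$ is \emph{exactly} the $\epsilon=0$ locus, but your argument only establishes the containment $Z(\Gamma_{t+2})\subseteq\{\ldots\}$; fortunately the compactness step only needs this one inclusion (plus the trivially verified fact that at least one such point, e.g.\ the one with $\alpha=1/2$, attains the value $\lambda(K_{t+2}^3)$). And you invoke without proof that $p_{K_{t+2}^3}$ has the uniform vector as its unique maximizer on $\Delta_{t+1}$; this is standard, but the paper's quantitative inequality serves as its own self-contained substitute.
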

\begin{proof}
We will use
\begin{align}\label{INEQU:Maclaurin-r=2}
xy = \left(\frac{x+y}{2}\right)^2 -  \left(\frac{x-y}{2}\right)^2,
\end{align}
and the following inequalities (see Lemma 2.2 in~\cite{LMR2}),
\begin{align}\label{INEQU:Maclaurin-r=3}
p_{K_{t+2}^3}(y_1,\ldots,y_{t+2})
\le \frac{t(t+1)}{2(t+2)^2}- \frac{t}{6(t+2)}\sum_{i=1}^{t+2}\left(y_i-\frac{1}{t+2}\right)^2
\end{align}
for all $(y_1,\ldots, y_{t+2})\in \Delta_{t+1}$.

Let $\delta = \left(\frac{\epsilon}{30t}\right)^2$ and note that $\epsilon=30t\delta^{1/2}$.
Let $p = p_{\Gamma_{t+2}}(x_1,\ldots, x_{t+4})$ and $\sigma_{i} = \sigma_i(x_1,\ldots, x_t) =\sum_{E\in \binom{[t]}{i}}\prod_{j\in E}x_j$
for $i\in \{1,2,3\}$.
Let $y_{12} = x_{t+1}+x_{t+2}$, $y_{34} = x_{t+3}+x_{t+4}$, $y_{13} = x_{t+1}+x_{t+3}$, $y_{24} = x_{t+2}+x_{t+4}$,
and $y_{t+1} = y_{t+2} = (x_{t+1}+x_{t+2}+x_{t+3}+x_{t+4})/2$.
Note that $y_{t+1} = y_{t+2} = (y_{12}+y_{34})/2 = (y_{13}+y_{24})/2$.
By Equation $(\ref{INEQU:Maclaurin-r=2})$, we have
\begin{align}
p
& = \sigma_{3} + \sigma_{2}\left(1-\sigma_1\right)+\sum_{i=1}^{t-1}x_iy_{12}y_{34} + x_{t}y_{13}y_{24} \notag\\
& \le \sigma_{3} + \sigma_{2}\left(1-\sigma_1\right) + \sigma_{1}y_{t+1}y_{t+2}
    - (\sigma_1-x_{t})\left(\frac{y_{12}-y_{34}}{2}\right)^2 - x_{t}\left(\frac{y_{13}-y_{24}}{2}\right)^2 \notag\\
& = p_{K_{t+2}^{3}}(x_1,\ldots,x_{t},y_{t+1},y_{t+2})
        - (\sigma_1-x_{t})\left(\frac{y_{12}-y_{34}}{2}\right)^2 - x_{t}\left(\frac{y_{13}-y_{24}}{2}\right)^2. \notag
\end{align}
Since $p \ge \frac{t(t+1)}{2(t+2)^2} - \delta$, it follows from Equation $(\ref{INEQU:Maclaurin-r=3})$
and the inequality above that
\begin{align}\label{equ:delta}
\delta
& \ge \frac{t}{6(t+2)}\sum_{i=1}^{t}\left(x_i-\frac{1}{t+2}\right)^2+\sum_{i=t+1}^{t+2}\left(y_i-\frac{1}{t+2}\right)^2 \notag\\
& + (\sigma_1-x_{t})\left(\frac{y_{12}-y_{34}}{2}\right)^2+x_{t}\left(\frac{y_{13}-y_{24}}{2}\right)^2.
\end{align}
First, it follows that
\begin{align}
\left|x_i-\frac{1}{t+2}\right| & \le\sqrt{\frac{6(t+2)}{t}\delta}\le 6\delta^{1/2} \quad\text{for }i\in [t],\text{ and}\notag\\
\left|y_i-\frac{1}{t+2}\right| & \le\sqrt{\frac{6(t+2)}{t}\delta}\le 6\delta^{1/2} \quad\text{for }i\in \{t+1,t+2\}. \notag
\end{align}
In particular, the first inequality implies that $x_t \ge 1/(t+2)- 6\delta^{1/2} \ge 1/4t^2$ and  $\sigma_1 -x_t = \sum_{i=1}^{t-1}x_i \ge 1/4t^2$.
So Inequality $(\ref{equ:delta})$ implies that
$$\max\{|y_{12}-y_{34}|,|y_{13}-y_{24}|\}\le \max\left\{2\sqrt{\delta/x_{t}},2\sqrt{\delta/(\sigma_1-x_{t})} \right\} \le 4t\delta^{1/2}.$$
Since $y_{12}+y_{34} = y_{13}+y_{24} = 2y_{t+1}$, we obtain
\begin{align}
\max\left\{\left|y_{12}-\frac{1}{t+2}\right|,\left|y_{23}-\frac{1}{t+2}\right|,
\left|y_{13}-\frac{1}{t+2}\right|,\left|y_{24}-\frac{1}{t+2}\right|\right\}
\le 4t\delta^{1/2}+6\delta^{1/2} \le 10t\delta^{1/2}. \notag
\end{align}
Consequently,
\begin{align}
|x_{t+1}-x_{t+4}| & = |y_{12}-y_{24}| \le 20t\delta^{1/2}, \quad\text{and}\quad
|x_{t+2}-x_{t+3}| & = |y_{12}-y_{13}| \le 20t\delta^{1/2}. \notag
\end{align}
Since $x_{t+1}+x_{t+2} = y_{12} = \frac{1}{t+2}\pm 10t\delta^{1/2}$,
there exists $\alpha \in [0,1]$ such that
\begin{align}
x_{t+1} = \frac{\alpha}{t+2}\pm 10t\delta^{1/2}, \quad\text{and}\quad
x_{t+2} = \frac{1-\alpha}{t+2}\pm 10t\delta^{1/2}. \notag
\end{align}
Consequently,
\begin{align}
x_{t+3} = \frac{1-\alpha}{t+2}\pm 30t\delta^{1/2}, \quad\text{and}\quad
x_{t+4} = \frac{\alpha}{t+2}\pm 30t\delta^{1/2}. \notag
\end{align}
This proves Lemma~\ref{LEMMA:Lagrangian-stability}.
\end{proof}

\subsection{Proof of Theorem~\ref{THM:main-sec1.1}~(d)}\label{SUBSEC:proof-stability}
We prove Theorem~\ref{THM:main-sec1.1}~(d) in this section.
Recall from Lemma~\ref{LEMMA:F-free-equ-F-homo-free} that $\mathcal{F}_{t+2}$ is blowup-invariant,
and from Claim~\ref{CLAIM:symmetrized-stable} that the hereditary family $\mathfrak{\Gamma}_{t+2}$ contains all symmetrized $\mathcal{F}_{t+2}$-free $3$-graphs.
Therefore, by Theorem~\ref{THM:Psi-trick:G-extendable-implies-degree-stability}, it suffices to show that $\mathcal{F}_{t+2}$ is vertex-extendable with respect to $\mathfrak{\Gamma}_{t+2}$.

\begin{proof}[Proof of Theorem~\ref{THM:main-sec1.1}~(d)]
Let $\epsilon>0$ be a sufficiently small constant and $n$ be a sufficiently large integer.
Let $\delta>0$ be the constant guaranteed by Lemma~\ref{LEMMA:Lagrangian-stability}.
We may assume that $\delta \le \epsilon$.
Let $\mathcal{H}$ be an $(n+1)$-vertex $\mathcal{F}_{t+2}$-free $3$-graph with minimum degree at least
$3(\lambda - \delta/2)(n+1)^2$, where $\lambda = \lambda(\Gamma_{t+2}) = \frac{t(t+1)}{6(t+2)^2}$.

Suppose that there exists a vertex $v\in V(\mathcal{H})$ such that $\mathcal{H} - v \in \mathfrak{\Gamma}_{t+2}$.
Let $V = V(\mathcal{H})\setminus \{v\}$ and $\mathcal{H}' = \mathcal{H} - v$.
Let $V = V_1\cup \cdots \cup V_{t+4}$ be a partition
such that the map $\phi\colon V(\mathcal{H}') \to V(\Gamma_{t+2})$
defined by $\phi(V_i) = i$ for $i\in [t+4]$ induces a homomorphism from $\mathcal{H}'$ to $\Gamma_{t+2}$.
Let $\mathcal{G} = \Gamma_{t+2}[V_1,\ldots,V_{t+4}]$ be a blowup of $\Gamma_{t+2}$.
It follows from the assumption that
\begin{align}\label{equ;G-H'-size}
    |\mathcal{G}\setminus\mathcal{H}'|
    \le \lambda n^3 - \frac{n}{3} \left(3\left(\lambda - \frac{\delta}{2}\right)(n+1)^2 - n\right)
    \le \delta n^3.
\end{align}
It is possible that some set $V_i$ is empty. So it will be convenient later to define the following graphs.

Let $w_1,\ldots, w_{t+4}$ be $t+4$ new vertices and let $W_i = V_i\cup \{w_i\}$ for $i\in [t+4]$.
Let $\widehat{\mathcal{G}} = \Gamma_{t+2}[W_1,\ldots,W_{t+4}]$ be a blowup of $\Gamma_{t+2}$.
For every $i\in [t+4]$ define $L_{\mathcal{G}}(i) = L_{\widehat{\mathcal{G}}}(w_i)[V]$
and $N_{\mathcal{G}}(i) = V\setminus V_i$.
Notice that $L_{\mathcal{G}}(i)$ is a graph on $N_{\mathcal{G}}(i)$.

We will use $N(u)$ and $L(u)$ to denote $N_{\mathcal{H}}(u)$ and $L_{\mathcal{H}}(u)$ respectively
for every vertex $u\in V(\mathcal{H})$ if it does not cause any confusion.

\begin{claim}\label{CLAIM:stability-proof-goal}
It suffices to prove that there exists some $i_0\in [t+4]$ such that
$L(v)\subset L_{\mathcal{G}}(i_0)$.
\end{claim}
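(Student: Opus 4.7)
The plan is straightforward: translate the assumption $L(v)\subseteq L_{\mathcal{G}}(i_0)$ into a direct extension of the existing homomorphism. By hypothesis $\mathcal{H}'=\mathcal{H}-v$ is $\Gamma_{t+2}$-colorable, as witnessed by the map $\phi\colon V\to[t+4]$ defined by $\phi(V_i)=i$ for every $i\in[t+4]$. I would extend $\phi$ to a map $\tilde\phi\colon V(\mathcal{H})\to[t+4]$ by setting $\tilde\phi(v)=i_0$, and then verify that $\tilde\phi$ is a homomorphism from $\mathcal{H}$ to $\Gamma_{t+2}$; this immediately gives $\mathcal{H}\in\mathfrak{\Gamma}_{t+2}$, which is exactly what Definition 7.1 demands for vertex-extendability.

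The only edges whose images under $\tilde\phi$ remain to be checked are those containing $v$. For every such edge $E=\{v,x,y\}\in\mathcal{H}$ we have $\{x,y\}\in L(v)\subseteq L_{\mathcal{G}}(i_0)=L_{\widehat{\mathcal{G}}}(w_{i_0})[V]$. Unwinding the definition of the blowup $\widehat{\mathcal{G}}=\Gamma_{t+2}[W_1,\ldots,W_{t+4}]$, this says that there exist $j_1,j_2\in[t+4]$ with $x\in W_{j_1}\cap V=V_{j_1}$, $y\in W_{j_2}\cap V=V_{j_2}$, and $\{i_0,j_1,j_2\}\in\Gamma_{t+2}$. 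Consequently $\{\tilde\phi(v),\tilde\phi(x),\tilde\phi(y)\}=\{i_0,j_1,j_2\}\in\Gamma_{t+2}$, confirming that $\tilde\phi$ is a valid homomorphism.

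There is no serious obstacle in this claim: it is essentially a definition-chase that localizes the vertex-extendability problem to finding a single index $i_0\in[t+4]$ such that the link $L(v)$ of the distinguished vertex $v$ is contained in $L_{\mathcal{G}}(i_0)$. The substantive work of the stability argument will then be to produce such an $i_0$ from the minimum degree hypothesis, using Lemma 7.6 to embed copies of $K_{t+2}^3$ and the structural results of Section 7.1 (in particular Lemmas 7.9 and 7.11 together with Observation 7.8) to force $L(v)$ to respect the $\Gamma_{t+2}$-partition of $V$.
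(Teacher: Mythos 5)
Your proof is correct and follows essentially the same approach as the paper's: both extend the $\Gamma_{t+2}$-coloring of $\mathcal{H}'=\mathcal{H}-v$ to $\mathcal{H}$ by assigning $v$ to part $i_0$, then observe that $L(v)\subset L_{\mathcal{G}}(i_0)$ guarantees the extended map sends every edge through $v$ to an edge of $\Gamma_{t+2}$. The paper phrases this by setting $\widehat{V}_{i_0}=V_{i_0}\cup\{v\}$ and first recording that $N(v)\cap V_{i_0}=\emptyset$; your version handles that point implicitly, since $\{i_0,j_1,j_2\}\in\Gamma_{t+2}$ forces $j_1,j_2\neq i_0$.
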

\begin{proof}
Suppose that there exists some $i_0\in [t+4]$ such that
$L(v)\subset L_{\mathcal{G}}(i_0)$.
First notice that since $N_{\mathcal{G}}(i_0)\cap V_{i_0} = \emptyset$, we have $N(v)\cap V_{i_0} = \emptyset$ as well.
Let $\widehat{V}_{i_0} = V_{i_0}\cup\{v\}$ and $\widehat{V}_i = V_i$ for $i\in [t+4]\setminus \{i_0\}$.
It is clear that $\mathcal{H}\subset \Gamma_{t+2}[\widehat{V}_1,\ldots,\widehat{V}_{t+4}]$.
Therefore, $\mathcal{F}_{t+2}$ is vertex-extendable with respect to $\mathfrak{\Gamma_{t+2}}$.
This proves Theorem~\ref{THM:main-sec1.1}~(d).
\end{proof}

Let $x_i = |V_i|/n$ for $i\in [t+4]$.
By assumption, we have
\begin{align}\label{equ:delta-H'}
    \delta(\mathcal{H}') \ge \delta(\mathcal{H})-n \ge 3(\lambda - \delta/2)(n+1)^2-n \ge 3(\lambda - \delta)n^2,
\end{align}
and hence $|\mathcal{H}'| \ge \delta(\mathcal{H}')n/3 \ge (\lambda-\delta)n^3$.
Since $p_{\Gamma_{t+2}}(x_1,\ldots,x_{t+4}) \ge |\mathcal{H}'|/n^3 \ge \lambda-\delta$,
it follows from Lemma~\ref{LEMMA:Lagrangian-stability} that
there exists a real number $\alpha \in [0,1]$ such that Equation $(\ref{equ:weight-stable})$ holds.

\begin{claim}\label{CLAIM:missing-link-few}
For every vertex $u\in V$ we have
$|L_{\mathcal{G}}(u)\setminus L(u)| \le 4t \epsilon n^2$.
\end{claim}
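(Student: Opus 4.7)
I plan to reduce the bound to a comparison between $|L_{\mathcal{G}}(u)|$ (which we upper bound via a Lagrangian-type computation on $\Gamma_{t+2}$) and $|L_{\mathcal{H}'}(u)|$ (which the minimum-degree hypothesis controls from below). Since $\mathcal{H}'\subseteq \mathcal{G}$ we have $L_{\mathcal{H}'}(u)\subseteq L_{\mathcal{G}}(u)\subseteq \binom{V}{2}$; combined with the identity $L(u)\cap \binom{V}{2}=L_{\mathcal{H}'}(u)$ this gives
\begin{align}
|L_{\mathcal{G}}(u)\setminus L(u)|=|L_{\mathcal{G}}(u)|-|L_{\mathcal{H}'}(u)|. \notag
\end{align}
Since $d_{\mathcal{H}}(u,v)\le n$, the minimum-degree hypothesis yields $|L_{\mathcal{H}'}(u)|\ge d_{\mathcal{H}}(u)-n\ge 3(\lambda-\delta/2)(n+1)^{2}-n\ge 3(\lambda-\delta/2)n^{2}-n$ for $n$ large, where $\lambda=\lambda(\Gamma_{t+2})=t(t+1)/(6(t+2)^{2})$.

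For the upper bound, when $u\in V_i$ we have $|L_{\mathcal{G}}(u)|=p_{L_{\Gamma_{t+2}}(i)}(\vec{x})\,n^{2}$, with $\vec{x}=(|V_1|/n,\ldots,|V_{t+4}|/n)$. The key step is to verify by direct computation, using the explicit edge list of $\Gamma_{t+2}$ (two copies of $K_{t+2}^{3}$ on $[t]\cup\{t+1,t+4\}$ and on $[t]\cup\{t+2,t+3\}$, together with the $2t$ ``new'' edges produced by step~(c) of the crossed-blowup construction), the Lagrange-type identity
\begin{align}
p_{L_{\Gamma_{t+2}}(i)}\!\bigl(\vec{x}^{*}(\alpha)\bigr)=\frac{t(t+1)}{2(t+2)^{2}}=3\lambda \quad\text{for every } i\in[t+4] \text{ and } \alpha\in[0,1], \notag
\end{align}
where $\vec{x}^{*}(\alpha)$ is the ideal weight vector from Equation~\eqref{equ:weight-stable}. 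Equation~\eqref{equ:weight-stable} also supplies the coordinate-wise estimate $|x_{j}-x_{j}^{*}(\alpha)|\le \epsilon$. Plugging this into the first-order expansion and using $\sum_{j}(x_{j}-x_{j}^{*})=0$ together with the fact that the ``weighted link-degree'' $\sum_{k\,:\,\{j,k\}\in L_{\Gamma_{t+2}}(i)}x_{k}^{*}$ takes only a small number of values (and cancels on the full sum), I expect to obtain $p_{L_{\Gamma_{t+2}}(i)}(\vec{x})\le 3\lambda+3\epsilon$ for $\epsilon$ small. Combining with the lower bound on $|L_{\mathcal{H}'}(u)|$ and using $\delta\le \epsilon$ then gives
\begin{align}
|L_{\mathcal{G}}(u)\setminus L(u)|\le (3\lambda+3\epsilon)n^{2}-3(\lambda-\delta/2)n^{2}+n\le \tfrac{9}{2}\epsilon n^{2}+n\le 4t\epsilon n^{2}, \notag
\end{align}
for $t\ge 2$ and $n$ sufficiently large.

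The main obstacle is the uniform-in-$\alpha$ verification of the Lagrange identity $p_{L_{\Gamma_{t+2}}(i)}(\vec{x}^{*}(\alpha))=3\lambda$. The computation must cope with a small asymmetry: for $i\in[t-1]$ step~(c) contributes the pairs $\{t+1,t+2\}$ and $\{t+3,t+4\}$ to $L_{\Gamma_{t+2}}(i)$, whereas for $i=t$ it contributes $\{t+1,t+3\}$ and $\{t+2,t+4\}$; for $i\in\{t+1,\dots,t+4\}$ one must additionally disentangle the links of the four ``crossed'' vertices. In every case, however, the $\alpha$-dependence cancels through the identity $\alpha^{2}+(1-\alpha)^{2}+2\alpha(1-\alpha)=1$, which is precisely the phenomenon that makes the crossed-blowup construction produce a one-dimensional family of maximizers, as expressed in Proposition~\ref{PROP:nonminimal-corss-blowup+}.
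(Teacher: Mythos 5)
Your proposal is correct and follows essentially the same route as the paper: reduce to $|L_{\mathcal{G}}(u)|-|L_{\mathcal{H}'}(u)|$, lower-bound the latter via the minimum-degree hypothesis, and upper-bound the former using the coordinate estimates of Lemma~\ref{LEMMA:Lagrangian-stability}. The only organizational difference is that the paper plugs the perturbed weights directly into the expanded link count (obtaining $3\lambda+2t\epsilon$), whereas you first pin down the exact value $p_{L_{\Gamma_{t+2}}(i)}(\vec{x}^{*}(\alpha))=3\lambda$ and then perturb --- a small reshuffling, not a different method.
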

\begin{proof}
Let us assume that $u\in V_i$ where $i\in [t+4]$.

If $i\in \{1,\ldots, t\}$, then it follows from $(\ref{equ:weight-stable})$ that
\begin{align}
|L_{\mathcal{G}}(u)|/n^2
& \le \binom{t-1}{2}\left(\frac{1}{t+2} + \epsilon\right)^2
    + (t-1)\left(\frac{1}{t+2} + \epsilon\right)\left(\frac{2}{t+2} + 4\epsilon\right) \notag\\
&  \quad  + \left(\frac{1}{t+2} + 2\epsilon\right)\left(\frac{1}{t+2} + 2\epsilon \right) \notag\\
& \le \frac{t(t+1)}{2(t+2)^2} + \frac{t(t+3)}{t+2}\epsilon + \frac{t^2+5t+2}{2}\epsilon^2
 \le \frac{t(t+1)}{2(t+2)^2} + 2t\epsilon. \notag
\end{align}

If $i \in \{t+1,\ldots, t+4\}$,
then it follows from $(\ref{equ:weight-stable})$ that
\begin{align}
|L_{\mathcal{G}}(u)|/n^2
& \le \binom{t}{2}\left(\frac{1}{t+2} + \epsilon\right)^2
    + t\left(\frac{1}{t+2} + \epsilon\right)\left(\frac{1}{t+2} + 2\epsilon\right) \notag\\
& \le \frac{t(t+1)}{2(t+2)^2} + t\epsilon + \frac{t(t+3)}{2}\epsilon^2
\le \frac{t(t+1)}{2(t+2)^2} + 2t\epsilon. \notag
\end{align}

Recall that $\lambda = \frac{t(t+1)}{6(t+2)^2}$ and $\delta\le \epsilon$.
Therefore, by~\eqref{equ:delta-H'}, we have
\begin{align}
|L_{\mathcal{G}}(u)\setminus L(u)|/n^2
\le \frac{t(t+1)}{2(t+2)^2} + 2t\epsilon - 3\left(\lambda-\delta\right)
\le \frac{t(t+1)}{2(t+2)^2} + 2t\epsilon - \frac{t(t+1)}{2(t+2)^2} + 3\epsilon
\le
4t\epsilon. \notag
\end{align}
\end{proof}

\begin{claim}\label{CLAIM:missing-neighbors-small}
For every $i\in [t+4]$ and every  $u\in V_i$ we have $|V_j\setminus N(u)| \le 12t\epsilon n$ for all $j\in [t+4]\setminus\{i\}$.
\end{claim}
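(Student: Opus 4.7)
The plan is to prove the claim by a double-counting argument that leverages the bound on missing link pairs established in Claim~\ref{CLAIM:missing-link-few}. Fix $u \in V_i$ and $j \in [t+4]\setminus\{i\}$. For any $w \in V_j \setminus N(u)$, any $k \in N_{\Gamma_{t+2}}(i,j)$, and any $w' \in V_k$, the pair $\{w,w'\}$ lies in $L_{\mathcal{G}}(u)$ because $\{i,j,k\}$ is an edge of $\Gamma_{t+2}$ and $\mathcal{G}$ is the corresponding blowup; on the other hand $\{u,w,w'\}$ cannot belong to $\mathcal{H}$ since $w \notin N(u)=N_{\mathcal{H}}(u)$, hence $\{w,w'\}\in L_{\mathcal{G}}(u)\setminus L(u)$. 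Distinct choices of $w$ yield distinct pairs, so
\[
|V_j \setminus N(u)| \cdot \sum_{k \in N_{\Gamma_{t+2}}(i,j)} |V_k|
\;\le\; |L_{\mathcal{G}}(u) \setminus L(u)|
\;\le\; 4t\epsilon n^2.
\]

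What remains is to bound $\sum_{k \in N_{\Gamma_{t+2}}(i,j)}|V_k|$ from below by a positive fraction of $n$. The key structural observation is that for every pair $\{i,j\}\subset[t+4]$, either $N_{\Gamma_{t+2}}(i,j)\cap[t]\ne\emptyset$, or $N_{\Gamma_{t+2}}(i,j) = \{t+1,t+2,t+3,t+4\}$ (the latter occurring only when $\{i,j\}=\{1,2\}$ and $t=2$). In the first case a single witnessing index $k\in[t]$ already gives $|V_k| \ge \tfrac{n}{t+2}-\epsilon n$ by Equation~(\ref{equ:weight-stable}). In the second case $|V_{t+1}|+|V_{t+2}|+|V_{t+3}|+|V_{t+4}|$ equals $\tfrac{2n}{t+2}\pm O(\epsilon n)$ independently of $\alpha$, because the contributions $\tfrac{\alpha}{t+2}$ and $\tfrac{1-\alpha}{t+2}$ occur in symmetric pairs. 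In either case the sum is at least $\tfrac{n}{2(t+2)}$ for $\epsilon$ sufficiently small, and substitution into the displayed inequality yields $|V_j \setminus N(u)| = O(t^2\epsilon n)$, which matches the claimed bound $12t\epsilon n$ once constants are absorbed into the initial choice of $\epsilon$.

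The dichotomy above is verified by a short case analysis using the definition of the crossed blowup $\Gamma_{t+2}=K_{t+2}^{3}\boxplus\{t+1,t+2\}$: pairs entirely within $[t]$ (for $t\ge 3$) are witnessed by any $k\in[t]\setminus\{i,j\}$; pairs $\{i,j\}$ with $i\in[t]$ and $j\in\{t+1,\dots,t+4\}$ by any $k\in[t]\setminus\{i\}$; the pairs $\{t+1,t+4\}$ and $\{t+2,t+3\}$ by every $k\in[t]$; the pairs $\{t+1,t+2\}$ and $\{t+3,t+4\}$ by any $k\in[t-1]$; and the pairs $\{t+1,t+3\}$ and $\{t+2,t+4\}$ by $k=t$. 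The main obstacle is purely bookkeeping through these cases, especially the degenerate $t=2$ situation that triggers the symmetric-pair alternative; no new conceptual ideas beyond Claim~\ref{CLAIM:missing-link-few} and Equation~(\ref{equ:weight-stable}) are required.
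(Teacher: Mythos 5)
Your argument is the paper's argument, re-expressed: for each $w\in V_j\setminus N(u)$ you count the $\sum_{k\in N_{\Gamma_{t+2}}(i,j)}|V_k|$ pairs of $L_{\mathcal{G}}(u)\setminus L(u)$ incident to $w$ (which is exactly the degree of $w$ in the link graph $L_{\mathcal{G}}(u)$) and divide the bound from Claim~\ref{CLAIM:missing-link-few} by a lower bound on that degree; the paper does precisely the same via $\delta(L_{\mathcal{G}}(u))$. Your structural dichotomy for $N_{\Gamma_{t+2}}(i,j)$ is correct, and you implicitly use the correct codegrees $d(t+1,t+2)=d(t+3,t+4)=t-1$ and $d(t+1,t+3)=d(t+2,t+4)=1$ (these two rows appear transposed in Observation~\ref{OBS:structure-link}(c)).

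The gap is numerical, and your last sentence glosses over it. From $\sum_k|V_k|\ge\frac{n}{2(t+2)}$ you obtain $|V_j\setminus N(u)|\le 8t(t+2)\epsilon n$, which is strictly larger than the stated $12t\epsilon n$ for every $t\ge 1$, with the discrepancy growing like $t$. You cannot ``absorb constants into the initial choice of $\epsilon$'': $\epsilon$ is the single parameter already consumed by Claim~\ref{CLAIM:missing-link-few} (which supplies the input $4t\epsilon n^2$) and by Claims~\ref{CLAIM:link-empty-inside}--\ref{CLAIM:case2-structure-of-R}; you cannot shrink it inside this one claim. To match $12t\epsilon n$ you would need $\sum_k|V_k|\ge n/3$, i.e.\ you should sum over all of $N_{\Gamma_{t+2}}(i,j)$ rather than a single witness in $[t]$. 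That gives $\approx\frac{t}{t+2}n\ge n/3$ whenever $|N_{\Gamma_{t+2}}(i,j)|\ge t$, but it fails for the low-codegree pairs inside $\{t+1,\dots,t+4\}$: for $(i,j)=(t+1,t+3)$ the degree of a vertex of $V_j$ is only $|V_t|\approx\frac{n}{t+2}$, so the double-count yields at best $\approx 4t(t+2)\epsilon n$. (The paper's own estimate $\delta(L_{\mathcal{G}}(u))\ge n/3$ does not account for these $j$ either.) The right fix is therefore not to chase $12t$ but to state and prove the claim with a constant of order $t^2$, as your argument does, and to observe that the downstream claims only need some $O_t(\epsilon n)$ bound since $\epsilon=\epsilon(t)$ is chosen last.
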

\begin{proof}
Fix $i\in [t+4]$ and let $u\in V_i$.
If $i \in [t]$, then it follows from Lemma~\ref{LEMMA:Lagrangian-stability} that
\begin{align}
\delta(L_{\mathcal{G}}(u))/n
& \ge \min\left\{(t-2)\left(\frac{1}{t+2}-\epsilon\right)
        +2\left(\frac{\alpha}{t+2}-\epsilon+\frac{1-\alpha}{t+2}-\epsilon\right),\right. \notag\\
&  \quad  \left. (t-1)\left(\frac{1}{t+2}-\epsilon\right)+\frac{\alpha}{t+2}-\epsilon+\frac{1-\alpha}{t+2}-\epsilon\right\}
 \ge \frac{t}{t+2}-(t+2)\epsilon
 \ge \frac{1}{3}. \notag
\end{align}
If $i \in [t+1,t+4]$, then it follows from Lemma~\ref{LEMMA:Lagrangian-stability} that
\begin{align}
\delta(L_{\mathcal{G}}(u))/n
& \ge \min\left\{(t-1)\left(\frac{1}{t+2}-\epsilon\right)
        +\frac{\alpha}{t+2}-\epsilon+\frac{1-\alpha}{t+2}-\epsilon, t\left(\frac{1}{t+2}-\epsilon\right)\right\} \notag\\
& \ge \frac{t}{t+2}-(t+2)\epsilon
\ge \frac{1}{3}. \notag
\end{align}
Since each vertex in $V_j\setminus N(u)$ contributes at least $\delta(L_{\Gamma_{t+2}}(u))$ edges to
$L_{\mathcal{G}}(u)\setminus L(u)$,
it follows from Claim~\ref{CLAIM:missing-link-few} that
\begin{align}
|V_j\setminus N(u)|
\le \frac{4t\epsilon n^2}{n/3} = 12t\epsilon n. \notag
\end{align}
\end{proof}

By symmetry, we may assume that $\alpha \ge 1/2$.
Since $\epsilon$ is sufficiently small, we have
\begin{align}\label{equ:min-xi}
    \min\{x_1,\ldots, x_t, x_{t+1}, x_{t+4}\} \ge \frac{1}{4(t+2)}.
\end{align}

\begin{claim}\label{CLAIM:link-empty-inside}
We have $L(v) \cap \binom{V_i}{2} = \emptyset$ for all $i\in [t+4]$.
\end{claim}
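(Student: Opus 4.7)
The plan is to suppose, for contradiction, that there exist $u_1, u_2 \in V_i$ with $\{u_1, u_2\} \in L(v)$, and to exhibit a subgraph $F \subseteq \mathcal{H}$ on $t+4$ vertices that is not $\Gamma_{t+2}$-colorable. Since $t+4 \le 4(t+4)^2$, this will place $F$ in $\mathcal{F}_{t+2}$, contradicting the $\mathcal{F}_{t+2}$-freeness of $\mathcal{H}$.

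By Observation~\ref{OBS:structure-link}(a), the only $(t+2)$-subsets of $V(\Gamma_{t+2})$ inducing $K_{t+2}^3$ are $A_1 := [t] \cup \{t+1, t+4\}$ and $A_2 := [t] \cup \{t+2, t+3\}$. I accordingly pick a target set $T \subset [t+4] \setminus \{i\}$ of size $t+1$ with $\{i\} \cup T \in \{A_1, A_2\}$: namely $T = A_1 \setminus \{i\}$ when $i \in [t] \cup \{t+1, t+4\}$, and $T = A_2 \setminus \{i\}$ otherwise. In the former regime, every $V_j$ with $j \in T$ satisfies $|V_j| \ge n/(4(t+2))$ by the reduction $\alpha \ge 1/2$ and Lemma~\ref{LEMMA:Lagrangian-stability}. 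Setting $S := \{u_1, u_2\}$ and taking $\eta$ to be a sufficiently small polynomial in $\epsilon$, I then apply Lemma~\ref{LEMMA:greedily-embedding-Gi} to $\mathcal{H}'$ with the blowup $\mathcal{G} = \Gamma_{t+2}[V_1, \ldots, V_{t+4}]$: condition~\ref{it:47a} holds by the size bound above, condition~\ref{it:47b} follows from $\mathcal{H}' \subseteq \mathcal{G}$ together with $|\mathcal{G}| - |\mathcal{H}'| = O(\delta n^3)$, and condition~\ref{it:47c} is Claim~\ref{CLAIM:missing-link-few}. The equality version of the lemma (applicable since $\mathcal{H}' \subseteq \mathcal{G}$) produces $z_j \in V_j$ for each $j \in T$ with $\mathcal{H}[U_k] = \mathcal{G}[U_k] \cong K_{t+2}^3$ for both $k = 1, 2$, where $U_k := \{u_k\} \cup \{z_j : j \in T\}$.

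Let $F := \mathcal{H}[\{v, u_1, u_2\} \cup \{z_j : j \in T\}]$. Any homomorphism $\phi : V(F) \to V(\Gamma_{t+2})$ must have $\phi|_{U_k}$ injective with $\phi(U_k) \in \{A_1, A_2\}$ for $k = 1, 2$ (since $F[U_k] = K_{t+2}^3$ and by Observation~\ref{OBS:structure-link}(a)). Combined with $|U_1 \cap U_2| = t+1 > t = |A_1 \cap A_2|$, this forces $\phi(U_1) = \phi(U_2)$, hence $\phi(u_1) = \phi(u_2)$; then $\phi(\{v, u_1, u_2\})$ has a repeated vertex and cannot be an edge of $\Gamma_{t+2}$, a contradiction. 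The main obstacle lies in the case $i \in \{t+2, t+3\}$, where $T$ contains the partner index $t+3$ or $t+2$, whose part may have size $o(n)$ when $\alpha$ is close to $1$, so condition~\ref{it:47a} of Lemma~\ref{LEMMA:greedily-embedding-Gi} can fail. Handling this corner case---either by arguing that when that partner part falls below the greedy threshold the hypothesis $|V_i| \ge 2$ itself becomes untenable (rendering the claim vacuous for such $i$), or by a separate selection argument exploiting the rigidity of partial homomorphisms into $\Gamma_{t+2}$ afforded by Lemma~\ref{LEMMA:gamma-t-partial-embedding}---is where the proof needs the most care.
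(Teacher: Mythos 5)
Your argument for $i\in [t]\cup\{t+1,t+4\}$ is essentially the paper's: select a $(t+1)$-set $\{z_j\}_{j\in T}$ from the large parts so that both $U_1=\{u_1\}\cup\{z_j\}$ and $U_2=\{u_2\}\cup\{z_j\}$ induce $K_{t+2}^3$, then use Observation~\ref{OBS:structure-link}~(a) (equivalently Lemma~\ref{LEMMA:two-K_t+2}) together with $|U_1\cap U_2|=t+1>t=|A_1\cap A_2|$ to force $\phi(u_1)=\phi(u_2)$, contradicting $\{v,u_1,u_2\}\in\mathcal{H}$. This is correct and matches the paper.

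However, there is a genuine gap in the case $i\in\{t+2,t+3\}$. You correctly identify the obstacle (the partner part $V_{t+3}$ or $V_{t+2}$ can be $o(n)$ when $\alpha$ is close to $1$, so condition~\ref{it:47a} of the greedy embedding lemma fails), but neither of your suggested workarounds closes it. The first one---that $|V_i|\ge 2$ becomes ``untenable''---is false: $V_{t+2}$ and $V_{t+3}$ are only bounded by roughly $(1-\alpha)n/(t+2)+\epsilon n$, which can be anywhere between $0$ and $\Theta(\epsilon^{1/4}n)$; in particular it can equal $2$ while the partner part is empty. The second suggestion (a rigidity argument via Lemma~\ref{LEMMA:gamma-t-partial-embedding}) is not worked out, and would still need to place a vertex in the small partner part to produce $\Gamma_{t+2}\setminus\{t+3\}$, so the same size issue recurs.

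The paper sidesteps the small parts entirely in this case. Instead of trying to realize $A_2$, it takes $T=[t]\cup\{t+1,t+4\}$ (all large parts) and uses the greedy lemma to pick $U=\{u_j:j\in T\}$ with $\mathcal{H}[U]=\mathcal{G}[U]\cong K_{t+2}^3$ \emph{and} $L(u_1)[U]=L_{\mathcal{G}}(u_1)[U]=L_{\mathcal{G}}(u_2)[U]=L(u_2)[U]$. It then considers the $(t+5)$-vertex graph $F=\mathcal{H}[U\cup\{u_1,u_2\}]\cup\{\{v,u_1,u_2\}\}$. Any homomorphism $\phi$ from $F$ to $\Gamma_{t+2}$ must map $U$ bijectively onto $[t]\cup\{t+1,t+4\}$ or $[t]\cup\{t+2,t+3\}$, and then, since the pairs $\{u_k,u_j\}$ ($k\in\{1,2\}$, $j\in T$) and $\{u_1,u_2\}$ are covered in $F$, it forces $\{\phi(u_1),\phi(u_2)\}=\{t+2,t+3\}$ (up to the symmetry between the two $K_{t+2}^3$ images). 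The contradiction is then purely a counting one: $|L_F(u_1)[U]\cap L_F(u_2)[U]|=\binom{t}{2}+t$, whereas $|L_{\Gamma_{t+2}}(t+2)[\phi(U)]\cap L_{\Gamma_{t+2}}(t+3)[\phi(U)]|=\binom{t}{2}$. So the missing ingredient in your proposal is precisely this ``put $u_1,u_2$ on top of one large $K_{t+2}^3$ and count the shared link'' step, which replaces the two-$K_{t+2}^3$ argument when you cannot build a second copy through the small parts.
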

\begin{proof}
Suppose to the contrary that there exists $i\in [t+4]$ such that $L(v) \cap \binom{V_i}{2} \neq \emptyset$.
Let us assume that $\{u_i,u_i'\} \in L(v) \cap \binom{V_i}{2}$.

First let us assume that  $i\in \{1,\ldots, t,t+1,t+4\}$.
Applying Lemma~\ref{LEMMA:greedily-embedding-Gi} with $\eta = 4t\epsilon$, $T = \{1,\ldots, t,t+1,t+4\}\setminus \{i\}$, and $S = \{u_i\}, \{u_i'\}$, respectively,
we obtain $u_j \in V_j$ for every $j\in T$ such that sets $U = \{u_j\colon j\in T\}\cup \{u_i\}$
and $U' = \{u_j\colon j\in T\}\cup \{u_i'\}$
satisfy
$$\mathcal{H}[U] = \mathcal{G}[U] \cong K_{t+2}^{3}\quad\text{and}\quad
    \mathcal{H}[U'] = \mathcal{G}[U']\cong K_{t+2}^{3}.$$
Note that Lemma~\ref{LEMMA:greedily-embedding-Gi}~(a) is guaranteed by~\eqref{equ:min-xi},
Lemma~\ref{LEMMA:greedily-embedding-Gi}~(b) is guaranteed by~\eqref{equ;G-H'-size},
and Lemma~\ref{LEMMA:greedily-embedding-Gi}~(c) is guaranteed by Claim~\ref{CLAIM:missing-link-few}.

Let $F = \mathcal{H}[U\cup U']\cup \{\{v,u_i,u_i'\}\}$.
It is clear that $v(F) = t+4\le 4(t+4)^2$.
Therefore, $F$ is contained as a subgraph in some blowup of $\Gamma_{t+2}$.
Since $\mathcal{H}[U]\cong K_{t+2}^{3}$, $\mathcal{H}[U']\cong K_{t+2}^{3}$, and $|U\cap U'| = t+1$,
it follows from Lemma~\ref{LEMMA:two-K_t+2} that $\{u_i,u_i'\}$ is not contained in any edge of $F$, a contradiction.

Now let us assume that $i\in \{t+2,t+3\}$.
Applying Lemma~\ref{LEMMA:greedily-embedding-Gi} with $\eta = 4t\epsilon$ and $T = \{1,\ldots, t,t+1,t+4\}$
we obtain $u_j \in V_j$ for every $j\in T$ such that the set $U = \{u_j\colon j\in T\}$ satisfies
\begin{align}
\mathcal{H}[U] = \mathcal{G}[U] \cong K_{t+2}^{3}\quad\text{and}\quad
L(u_i)[U]  = L_{\mathcal{G}}(u_i)[U] = L_{\mathcal{G}}(u_i')[U]= L(u_i')[U]. \notag
\end{align}

Let $F = \mathcal{H}[U\cup \{u_i,u_i'\}]\cup \{\{v,u_i,u_i'\}\}$.
It is clear that $v(F) = t+5 \le 4(t+4)^2$.
So there exists a homomorphism $\phi\colon V(F)\to V(\Gamma_{t+2})$ from $F$ to $\Gamma_{t+2}$.
Since $F[U]\cong K_{t+2}^{3}$, it follows from Observation~\ref{OBS:structure-link} (a) that either
$\phi(U) = [t]\cup \{t+1,t+4\}$ or $\phi(U) = [t]\cup \{t+2,t+3\}$.
By symmetry we may assume that $\phi(U) = [t]\cup \{t+1,t+4\}$.
Since $U\cup \{u_i,u_i'\}$ is $2$-covered in $F$,
we have $\{\phi(u_i),\phi(u_i')\} = [t+4]\setminus \phi(U) = \{t+2,t+3\}$.
However, notice that $|L_{F}(u_i)[U] \cap L_{F}(u_i')[U]| = \binom{t}{2}+t$,
while $|L_{\Gamma_{t+2}}(t+2)[\phi(U)] \cap L_{\Gamma_{t+2}}(t+3)[\phi(U)]| = \binom{t}{2}<\binom{t}{2}+t$, a contradiction.
\end{proof}

Define
\begin{align}
I_{small}  = \left\{i\in [t+4]\colon |N(v)\cap V_i|\le \epsilon^{1/4}n\right\}, \quad\text{and}\quad
I_{\emptyset}  = \left\{i\in [t+4]\colon N(v)\cap V_i = \emptyset\right\}. \notag
\end{align}
It follows from the definition that $I_{\emptyset} \subset I_{small}$.

Next we will consider two cases depending on the sizes of $V_{t+2}$ and $V_{t+3}$:
either $V_{t+2}$ and $V_{t+3}$ are both large (i.e. at least $50t^2\epsilon^{1/4}$) or $V_{t+2}$ and $V_{t+3}$ are both small.
The first case is quite easy to handle while the second case needs more work.

\bigskip

\noindent\textbf{Case 1}: $\alpha \le 1 - 101t^3\epsilon^{1/4}$.

Note that in this case we have
$$\min\left\{|V_{t+1}|,|V_{t+2}|,|V_{t+3}|,|V_{t+4}|\right\}\ge \frac{101t^3\epsilon^{1/4}}{t+2}n \ge 50t^2\epsilon^{1/4}n.$$

Define an auxiliary graph $R$ on $[t+4]$ in which two vertices $i$ and $j$ are adjacent
iff there exists $e\in L(v)$ such that $e\cap V_{i} \neq \emptyset$ and $e\cap V_{j} \neq \emptyset$.
An easy observation is that $L(v)\subset R[V_1,\ldots,V_{t+4}]$.

\begin{claim}\label{CALIM:case1-R1-structure}
We have $L(v) \subset L_{\mathcal{G}}(i)$ for some $i\in I_{small}$.
\end{claim}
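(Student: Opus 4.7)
The plan is to produce an index $i_0 \in [t+4]$ with $R \subseteq L_{\Gamma_{t+2}}(i_0)$; this suffices, since by Claim~\ref{CLAIM:link-empty-inside} every edge of $L(v)$ joins two distinct parts, so $R \subseteq L_{\Gamma_{t+2}}(i_0)$ immediately forces $L(v) \subseteq L_{\mathcal{G}}(i_0)$, and the absence of $R$-edges at $i_0$ yields $N(v) \cap V_{i_0} = \emptyset$, placing $i_0 \in I_{\emptyset} \subseteq I_{small}$.

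First I would extract density information about $R$: combining the minimum-degree lower bound $|L(v)| \ge 3(\lambda - \delta/2)(n+1)^2$ with the trivial upper bound $|L(v)| \le \sum_{\{j,k\} \in R}|V_j||V_k|$ and the weight estimates of Equation~$(\ref{equ:weight-stable})$ forces $\sum_{\{j,k\}\in R} x_j x_k \ge 3\lambda - O(\delta)$, which essentially matches the value of $p_{L_{\Gamma_{t+2}}(i_0)}$ at a near-optimum vector (a consequence of the Lagrange condition $\partial p_{\Gamma_{t+2}}/\partial X_{i_0} = 3\lambda$ at the maximum of $p_{\Gamma_{t+2}}$). The correct candidate $i_0$ is then dictated by which special pairs $\{j,k\} \subseteq \{t+1,t+2,t+3,t+4\}$ appear in $R$: by Observation~\ref{OBS:structure-link} (c), special pairs of codegree $1$ in $\Gamma_{t+2}$ force $i_0$ uniquely, while codegree-$(t-1)$ or codegree-$t$ special pairs restrict it to a subset of $[t]$.

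To prove $R \subseteq L_{\Gamma_{t+2}}(i_0)$ for the chosen $i_0$, I would argue by contradiction: suppose some $\{j',k'\} \in R$ is incompatible with $i_0$, and let $\{a,b\}, \{a',b'\} \in L(v)$ be witnessing edges for the forcing pair and for $\{j',k'\}$. Using the Case~1 hypothesis $|V_i| \ge 50t^2\epsilon^{1/4}n$ for each $i \in [t+1,t+4]$ together with the lower bounds on $|V_i|$ for $i \in [t]$ from $(\ref{equ:weight-stable})$, I would apply Lemma~\ref{LEMMA:greedily-embedding-Gi} with $\eta = O(\epsilon)$ to a $(t+2)$-subset $T \subseteq [t+4]$ for which $\Gamma_{t+2}[T] \cong K_{t+2}^3$, chosen (via Observation~\ref{OBS:structure-link} (a)) to contain the parts involved in both witness pairs. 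The output is a set $U \subseteq V$ on which $\mathcal{H}$ and $\mathcal{G}$ agree; together with $v$ and the extra vertices supporting $\{a,b\}, \{a',b'\}$, this yields a subgraph $F \subseteq \mathcal{H}$ on at most $t+5 \le 4(t+4)^2$ vertices. By the rigidity results in Lemmas~\ref{LEMMA:gamma-t-automorphism} and~\ref{LEMMA:gamma-t-partial-embedding}, every $\Gamma_{t+2}$-homomorphism of $F$ respects the bipartition $[t] \cup \{t+1,\ldots,t+4\}$ rigidly and must assign $v$ to an index compatible with both witness pairs; but the incompatibility of $\{j',k'\}$ with the putative $i_0$ precludes any such assignment, so $F \in \mathcal{F}_{t+2}$, contradicting the $\mathcal{F}_{t+2}$-freeness of $\mathcal{H}$.

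The main obstacle will be the case analysis dictated by the asymmetric behavior of the four special vertices in $\Gamma_{t+2}$: since codegrees $1$, $t-1$, and $t$ all occur among pairs in $\{t+1,\ldots,t+4\}$, both the identification of the forcing pair and the choice of $T$ vary across subcases. Once $i_0$ is fixed, however, the verification that every remaining $R$-edge is permissible becomes uniform, since the non-special pairs (inside $[t]$ or between $[t]$ and $\{t+1,\ldots,t+4\}$) have codegree large enough to lie in every $L_{\Gamma_{t+2}}(i_0)$.
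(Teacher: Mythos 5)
Your plan diverges from the paper's proof in a way that introduces a real gap, and the Case~1 hypothesis is the key thing your proposal fails to exploit.

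The paper's proof is simpler and more direct than yours. It never needs to pre-identify a candidate $i_0$ from a density/Lagrange argument: instead it applies Lemma~\ref{LEMMA:greedily-embedding-Gi} with $T = [t+4]$ to find a copy of the \emph{entire} $3$-graph $\Gamma_{t+2}$ inside $\mathcal{H}$, adjacent to $v$ in the appropriate places, plus the witness vertices $U_1$ for each edge of $R$. The reason this is possible is exactly the Case~1 hypothesis: every part $V_i$ has size at least $50t^2\epsilon^{1/4}n$. The homomorphism $\phi\colon V(F) \to V(\Gamma_{t+2})$ then restricts to an automorphism of $\Gamma_{t+2}$ on the embedded $U_2$, and Lemma~\ref{LEMMA:gamma-t-automorphism} pins down $\phi$ (up to symmetry) on all $t+4$ parts at once. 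Since $\phi(v)$ must avoid $\phi(i)$ for all $i \notin I_{small}$ (because $u_i \in N(v)$ for those $i$), one immediately has $\phi(v) \in \phi(I_{small})$, which yields the claim.

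Your proposal instead applies the greedy embedding to a $(t+2)$-subset $T$ with $\Gamma_{t+2}[T] \cong K_{t+2}^3$. This is where the argument breaks. First, a copy of $K_{t+2}^3$ inside $\Gamma_{t+2}$ is far from rigid: by Observation~\ref{OBS:structure-link}~(a) there are (at least) two copies up to labeling, and within each there is full $S_t$-symmetry on $[t]$; neither Lemma~\ref{LEMMA:gamma-t-automorphism} (which requires a full $\Gamma_{t+2}$-copy) nor Lemma~\ref{LEMMA:gamma-t-partial-embedding} (which requires a $\Gamma_{t+2}\setminus\{t+3\}$-copy on $t+3$ vertices) applies to a $K_{t+2}^3$-embedding, so you cannot ``pin down'' $\phi$ as claimed. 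Second, the two witness pairs you mention might be, say, $\{t+1,t+4\}$ and $\{t+2,t+3\}$, whose union cannot be contained in any $(t+2)$-subset $T$ inducing a $K_{t+2}^3$, so your choice of $T$ is not even well-defined in general. Third, the preliminary density argument does not cleanly locate $i_0$: the Lagrangian of $\Gamma_{t+2}$ is not attained at a unique interior point (it is attained on a segment; that is the whole point of the paper), so the naive KKT-style condition $\partial p_{\Gamma_{t+2}}/\partial X_{i_0} = 3\lambda$ you cite is not available, and a near-matching weighted edge count for $R$ does not by itself produce a containment $R \subseteq L_{\Gamma_{t+2}}(i_0)$.

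The one correct observation in your sketch is that $L(v) \subset L_{\mathcal{G}}(i_0)$ automatically forces $N(v)\cap V_{i_0}=\emptyset$, so the conclusion is stronger than ``$i_0 \in I_{small}$.'' But to reach that conclusion you should follow the paper's route: use the Case~1 lower bound on all $|V_i|$ to embed the whole $\Gamma_{t+2}$, not a $K_{t+2}^3$.
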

\begin{proof}
For each edge $ij \in R$ let $e_{ij} \in L(v)$ be an edge
with one endpoint in $V_i$ and the other endpoint in $V_j$.
Let $U_1 = \bigcup_{ij\in R}e_{ij}$ be a vertex subset of $V$ and note that $|U_1| \le 2\binom{t+4}{2}$.
Define
\begin{align}
V_i' =
\begin{cases}
V_i \cap N(v) \cap \left(\bigcap_{u\in U_1\setminus V_i}N(u)\right) & \text{for } i \in [t+4]\setminus I_{small}, \\
V_i \cap \left(\bigcap_{u\in U_1\setminus V_i}N(u)\right) & \text{for } i \in I_{small}.
\end{cases} \notag
\end{align}
It follows from Claim~\ref{CLAIM:missing-neighbors-small} and our assumption that
\begin{align}
|V_i'|\ge \min\left\{\epsilon^{1/4}n - \left(|U_1|+1\right)\times 12t\epsilon n,\
                50t^2\epsilon^{1/4}n - |U_1|\times 12t\epsilon n\right\} \ge 4(t+4)^3\epsilon^{1/3}n. \notag
\end{align}
Applying Lemma~\ref{LEMMA:greedily-embedding-Gi} with $\eta = 4t\epsilon$ and $T = [t+4]$
we obtain $u_i \in V_i'$ for $i\in T$ such that the set $U_2 = \{u_i\colon i\in T\}$ satisfies
\begin{enumerate}[label=(\alph*)]
  \item $\mathcal{H}[U_2] = \mathcal{G}[U_2] \cong \Gamma_{t+2}$, and
  \item $L(u)[U_2] = L_{\mathcal{G}}(u)[U_2]$ for all $u\in U_1$.
\end{enumerate}
Let $F = \mathcal{H}[U_1\cup U_2 \cup \{v\}]$.
Note that $v(F) \le 2\binom{t+4}{2}+ t+5\le  3(t+4)^2$.
By assumption there exists a homomorphism $\phi\colon V(F) \to V(\Gamma_{t+2})$ from $F$ to $\Gamma_{t+2}$
(otherwise we would have $F\in\mathcal{F}_{t+2}$, a contradiction).
Since $F[U_2] \cong \Gamma_{t+2}$, it follows from Lemma~\ref{LEMMA:gamma-t-automorphism} that
$\phi(u_{i})\in [t]$ for $i\in [t]$ (and $\phi(u_t) = t$ if $t\ge 3$), and
\begin{align}
\left\{ \phi(\{u_{t+1},u_{t+4}\}),\phi(\{u_{t+2},u_{t+3}\}) \right\} = \left\{ \{t+1,t+4\},\{t+2,t+3\} \right\}. \notag
\end{align}
By symmetry we may assume that $\phi(u_i) = i$ for $i \in [t+4]$.
For every $i\in [t+4]$ and $u\in V_i\cap U_1$ since $u$ is adjacent to all vertices in $U_2\setminus \{u_i\}$ in $\partial F$,
we have $\phi(u) = i$.
Finally, since $v$ is adjacent to all vertices in $\{u_i\colon i\in [t+4]\setminus I_{small}\}$,
we have $\phi(v)\not\in \phi([t+4]\setminus I_{small})$.
In other words, $\phi(v)\in \phi(I_{small})$.
This means that there exists some $i\in I_{small}$ such that
$\phi(R)\subset L_{\Gamma_{t+2}}(\phi(i))$, and hence $L(v) \subset L_{\mathcal{G}}(i)$.
\end{proof}

\medskip

\noindent\textbf{Case 2}: $\alpha \ge 1 - 101t^3\epsilon^{1/4}$.

Note that in this case we have
\begin{align}
\max\left\{|V_{t+2}|,|V_{t+3}|\right\} & \le \frac{101t^3\epsilon^{1/4}}{t+2}n+\epsilon n \le 102t^2\epsilon^{1/4}n, \quad\text{and}\notag\\
\min\left\{|V_{t+1}|,|V_{t+4}|\right\} & \ge \frac{1-100t^3\epsilon^{1/4}}{t+2}n-\epsilon n \ge \frac{n}{t+2}-50t^2\epsilon^{1/4}n. \notag
\end{align}

For convenience, let $S = [t]\cup \{t+1,t+4\}$.

\begin{claim}\label{CLAIM:case2-at-most-one-small-intersection}
We have $|I_{small}\cap S| \le 1$.
\end{claim}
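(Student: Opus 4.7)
The plan is to derive a contradiction by bounding $|L(v)|$ above using the smallness hypothesis and comparing with the minimum-degree lower bound. Suppose for contradiction that there exist distinct $i,j\in I_{small}\cap S$, and set $a_k = |N(v)\cap V_k|$ for each $k\in[t+4]$; by definition $a_i,a_j\le\epsilon^{1/4}n$. By Claim~\ref{CLAIM:link-empty-inside}, the link $L(v)$ contains no pair lying inside a single $V_k$, so decomposing its edges according to whether both endpoints avoid $V_i\cup V_j$ or at least one endpoint lies in $V_i\cup V_j$, and using $a_k\le |V_k|$, we obtain
\begin{align}
|L(v)| \le \sum_{\{k,\ell\}\subset[t+4]\setminus\{i,j\}}|V_k||V_\ell| + (a_i+a_j)\,n.
\end{align}

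Next I would plug in the Case~2 size estimates. Since $i,j\in S=[t]\cup\{t+1,t+4\}$, the index set $[t+4]\setminus\{i,j\}$ comprises the $t$ ``large'' indices in $S\setminus\{i,j\}$ satisfying $|V_k|\le n/(t+2)+\epsilon n$, together with $\{t+2,t+3\}$ where $|V_k|\le 102t^2\epsilon^{1/4}n$. Only the $\binom{t}{2}$ large-large pairs contribute a term of order $n^2$; each large-small pair contributes at most $\frac{n}{t+2}\cdot 102t^2\epsilon^{1/4}n$, and the unique small-small pair contributes at most $(102t^2\epsilon^{1/4}n)^2$. Summing and absorbing all lower-order terms into a single $\epsilon^{1/4}n^2$ error yields
\begin{align}
|L(v)| \le \frac{t(t-1)}{2(t+2)^2}\,n^2 + C(t)\,\epsilon^{1/4}n^2
\end{align}
for a constant $C(t)$ depending only on $t$ (using $(a_i+a_j)n\le 2\epsilon^{1/4}n^2$ for the extra term).

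Finally, I would compare this to the minimum-degree lower bound. By hypothesis
\begin{align}
|L(v)| = d_{\mathcal{H}}(v) \ge 3(\lambda-\delta/2)(n+1)^2 \ge \frac{t(t+1)}{2(t+2)^2}\,n^2 - 3\delta n^2
\end{align}
for $n$ large. Combining the two bounds forces $\frac{t}{(t+2)^2} \le 3\delta + C(t)\,\epsilon^{1/4}$, which fails once $\epsilon$ (and hence $\delta\le\epsilon$) is sufficiently small in terms of $t$. This contradicts the existence of two indices in $I_{small}\cap S$ and establishes the claim. The argument presents no real obstacle beyond bookkeeping, because losing two ``large'' classes from a near-balanced $(t+2)$-partite clique blowup costs a fixed positive fraction of the link size, which dwarfs every $\epsilon$- and $\delta$-error.
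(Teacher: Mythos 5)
Your proof is correct and follows essentially the same strategy as the paper: bound $|L(v)|$ from above by exploiting that two of the $t+2$ classes in $S$ contribute at most $2\epsilon^{1/4}n^2$ link edges and $V_{t+2},V_{t+3}$ contribute $O(\epsilon^{1/4}n^2)$ (using Claim~\ref{CLAIM:link-empty-inside} to forbid edges within a single part), leaving only $\binom{t}{2}\bigl(\tfrac{1}{t+2}+\epsilon\bigr)^2 n^2 = \tfrac{t(t-1)}{2(t+2)^2}n^2 + O(\epsilon)n^2$, which contradicts $d(v)\ge 3(\lambda-\delta)n^2 = \tfrac{t(t+1)}{2(t+2)^2}n^2 - 3\delta n^2$. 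The only cosmetic difference is your bookkeeping of the decomposition (pairs avoiding $V_i\cup V_j$ versus pairs meeting them), whereas the paper directly reads off the three contributions.
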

\begin{proof}
Suppose to the contrary that $|I_{small}\cap S| \ge 2$.
Then it follows from Lemma~\ref{LEMMA:Lagrangian-stability} that
\begin{align}
|L(v)|/n^2
& \le \binom{t}{2}\left(\frac{1}{t+2}+\epsilon\right)^2 + 2\times 102t^2\epsilon^{1/4} + 2\epsilon^{1/4} \notag\\
& =    \frac{t(t+1)}{2(t+2)^2} - \frac{t}{(t+2)^2} +\frac{t(t-1)}{t+2}\epsilon
    +  \frac{t(t-1)}{2}\epsilon^2+204t^2\epsilon^{1/4} + 2\epsilon^{1/4} \notag\\
& < 3(\lambda-\delta), \notag
\end{align}
which contradicts~\eqref{equ:delta-H'}.
\end{proof}

Next we will consider two cases depending on the value of $|I_{small}\cap S|$: either $|I_{small}\cap S| = 1$ or $|I_{small}\cap S| = 0$.

\noindent\textbf{Case 2.1}: $|I_{small}\cap S| = 1$.

We may assume that $I_{small} = \{1\}$. The proof for other cases follows analogously.
Let $W = \bigcup_{i\in S\setminus I_{small}}V_i$.
It follows from our assumption that
\begin{align}
|L(v)[W]|/n^2
\ge \delta(\mathcal{H})/n^2 - 2\times 102t^2\epsilon^{1/4} - \epsilon^{1/4}
\ge \frac{t(t+1)}{2(t+2)^2} - 205t^2\epsilon^{1/4}. \notag
\end{align}
Notice that the induced subgraph of $L_{\mathcal{G}}(1)$ on $W$
is the blowup $K_{t+1}[V_2,\ldots,V_{t},V_{t+1},V_{t+4}]$ of $K_{t+1}$.
Using the same argument as in the proof of Claim~\ref{CLAIM:missing-link-few}, we get
\begin{align}
|L_{\mathcal{G}}(1)\setminus L(v)[W]|/n^2
\le \frac{t(t+1)}{2(t+2)^2} + 2t\epsilon - \left(\frac{t(t+1)}{2(t+2)^2} - 205t^2\epsilon^{1/4}\right)
\le 206t^2\epsilon^{1/4}. \notag
\end{align}
Also, using the same argument as in the proof of Claim~\ref{CLAIM:missing-neighbors-small}, we get
\begin{align}
|N(v)\cap V_{i}| \ge |V_i| - \frac{206t^2\epsilon^{1/4}n^2}{n/3}
        \ge \frac{n}{t+2} -50 t^2\epsilon^{1/4}n - 700t^2\epsilon^{1/4}n
        \ge \frac{n}{2(t+2)}. \notag
\end{align}

\begin{claim}\label{CLAIM:case2-no-neighbor-in-V1}
We have $N(v)\cap V_1 = \emptyset$.
\end{claim}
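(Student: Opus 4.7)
The plan is to argue by contradiction. Assume $u_1\in V_1\cap N(v)$ exists, so some edge $\{v,u_1,w\}\in \mathcal{H}$ is present, and Claim~\ref{CLAIM:link-empty-inside} forces $w\in V_j$ for some $j\in \{2,\ldots,t+4\}$. I will construct a subgraph $F\subset \mathcal{H}$ on at most $4(t+4)^2$ vertices that admits no homomorphism to $\Gamma_{t+2}$, contradicting the $\mathcal{F}_{t+2}$-freeness of $\mathcal{H}$. The design of $F$ will ensure that any homomorphism $\phi$ is forced to assign $v$ and $u_1$ to the same vertex of $\Gamma_{t+2}$, which spoils the edge $\{v,u_1,w\}$.

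Invoking Lemma~\ref{LEMMA:greedily-embedding-Gi} with $T=\{2,\ldots,t,t+1,t+4\}$ and $S=\{u_1,w,v\}$ (refining $V_j$ into $\{w\}$ and $V_j\setminus\{w\}$ in the sub-case $j\in T$ so that $w$ lies outside the $T$-parts), I locate $u_i\in V_i\setminus\{w\}$ for each $i\in T$ such that $\mathcal{H}[\{u_1,u_2,\ldots,u_t,u_{t+1},u_{t+4}\}]$ is a copy of $K_{t+2}^{3}$ and $\{u_i,u_{i'}\}\in L(v)$ for every $\{i,i'\}\subset T$. The hypotheses are met: (a) the preceding Case~2.1 estimates give $|V_i|\ge n/(2(t+2))$ for every $i\in T$, far exceeding the required size; (b) $\mathcal{H}'\subset \mathcal{G}=\Gamma_{t+2}[V_1,\ldots,V_{t+4}]$ with $|\mathcal{H}'|\ge (\lambda-\delta)n^3$ gives a negligible total triple-wise deficit; and (c) the link-deficit bound $|L_{\mathcal{G}}(1)\setminus L(v)[W]|\le 206t^2\epsilon^{1/4}n^2$ established earlier controls the per-pair deficit of $L(v)$, while Claim~\ref{CLAIM:missing-link-few} applied to $u_1$ and $w$ (both inheriting rich links from the large-minimum-degree hypothesis on $\mathcal{H}$) supplies the corresponding bounds for those vertices in $S$. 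I then take $F$ to be the subgraph of $\mathcal{H}$ on $\{v,u_1,\ldots,u_t,u_{t+1},u_{t+4},w\}$ consisting of this $K_{t+2}^{3}$, the pair-edges $\{v,u_i,u_{i'}\}$ for $\{i,i'\}\subset T$, and the edge $\{v,u_1,w\}$; then $v(F)\le t+5\le 4(t+4)^2$.

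Given any homomorphism $\phi\colon V(F)\to V(\Gamma_{t+2})$, the restriction to $\{u_1,\ldots,u_t,u_{t+1},u_{t+4}\}$ is an injective embedding of $K_{t+2}^{3}$ into $\Gamma_{t+2}$; by Observation~\ref{OBS:structure-link}(a) its image is $[t]\cup\{t+1,t+4\}$ or $[t]\cup\{t+2,t+3\}$, and the two cases are symmetric under the clone swap, so I treat the first. Setting $i_0:=\phi(u_1)\in [t]$, the pair-edges force the link of $\phi(v)$ in $\Gamma_{t+2}$ to contain every pair of $([t]\cup\{t+1,t+4\})\setminus\{i_0\}$; since $L_{\Gamma_{t+2}}(t+2)$ misses $\{t,t+1\}$ (as $\{t,t+1,t+2\}\notin \Gamma_{t+2}$) and $\{j,t+4\}$ for each $j\in [t-1]$, and $L_{\Gamma_{t+2}}(t+3)$ misses the symmetric pairs, this restricts $\phi(v)$ to $[t]\cup\{t+1,t+4\}$. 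The same pair-edges force $\phi(v)\ne \phi(u_i)$ for every $i\in T$, leaving only $\phi(v)=i_0=\phi(u_1)$; then $\phi(\{v,u_1,w\})=\{i_0,i_0,\phi(w)\}$ has at most two distinct elements and fails to be an edge of $\Gamma_{t+2}$, the desired contradiction. The main technical hurdle I anticipate is the careful verification of the per-pair link-deficit hypotheses needed to apply Lemma~\ref{LEMMA:greedily-embedding-Gi} after the partition refinement, particularly to ensure that $w$'s link is sufficiently rich on pairs inside $T$ when $j\in T$ and $w$ must be separated from the representative in $V_j$.
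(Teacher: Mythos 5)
Your proof is correct, and the witness subgraph $F$ you build is the same one the paper uses: you pick one $u_i\in V_i$ for each $i\in T=\{2,\ldots,t,t+1,t+4\}$ so that $U\cup\{u_1\}$ spans $K_{t+2}^3$ and $\binom{U}{2}\subset L(v)$ (whence $U\cup\{v\}$ also spans $K_{t+2}^3$), and you adjoin an edge of $\mathcal{H}$ through $\{v,u_1\}$. At that point the paper finishes in one line by invoking Lemma~\ref{LEMMA:two-K_t+2}: two copies of $K_{t+2}^3$ in a blowup of $\Gamma_{t+2}$ sharing $t+1$ vertices force their symmetric difference $\{u_1,v\}$ into a single part, so no edge can cover it. Your ``forcing'' argument is a correct in-line re-derivation of exactly that lemma's content via the codegree/link profile of $\Gamma_{t+2}$, but it is considerably longer than necessary. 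Two further cleanup points: (i) there is no need to place $w$ in the embedding set $S$ of Lemma~\ref{LEMMA:greedily-embedding-Gi}, nor to refine the partition so that $w$ avoids the $T$-parts --- the contradiction only requires that \emph{some} edge of $F$ contains $\{v,u_1\}$, so, as the paper does, you can simply append $\{v,u_1,w\}$ to $F$ with no compatibility condition on $w$, removing the ``main technical hurdle'' you flag; and (ii) the assertion $\phi(u_1)\in[t]$ is unjustified --- $\phi(u_1)$ may equally well be $t+1$ or $t+4$ --- though the two families of missed pairs $\{t,t+1\}$ and $\{j,t+4\}$ ($j\in[t-1]$) that you exhibit do cover all the residual cases, so the gap is expository rather than substantive. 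Switching to a direct citation of Lemma~\ref{LEMMA:two-K_t+2} would shorten the argument and sidestep both issues.
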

\begin{proof}
Suppose to the contrary that there exists a vertex $u_1\in N(v)\cap V_1$.
Let $V_i' = V_i\cap N(v) \cap N(u_1)$ for $i\in S\setminus \{1\}$.
Notice from Claim~\ref{CLAIM:missing-neighbors-small} that $|V_i'| \ge \frac{n}{2(t+2)} - 12t\epsilon n > 206(t+4)^4\epsilon^{1/12} n$.
Applying Lemma~\ref{LEMMA:greedily-embedding-Gi} with $\eta = 206t^2\epsilon^{1/4}$ and $T = S\setminus \{1\}$,
we obtain a vertex $u_i \in V_i'$ for every $i\in T$ such that the set $U = \{u_i\colon i\in T\}$ satisfies
\begin{align}
\mathcal{H}[U\cup \{u_1\}] = \mathcal{G}[U\cup \{u_1\}] \cong K_{t+2}^{3}
\quad\text{and}\quad
\mathcal{H}[U\cup \{v\}] = \mathcal{G}[U\cup \{v\}] \cong K_{t+2}^{3}. \notag
\end{align}
Let $e\in \mathcal{H}$ be an edge that contains $\{v,u_1\}$ and let $F = \mathcal{H}[U\cup\{v,u_1\}]\cup\{e\}$.
By assumption, $F$ should be contained in some blowup of $\Gamma_{t+2}$,
but this would contradict Lemma~\ref{LEMMA:two-K_t+2} since $\{v,u_1\}$ is contained in $e$.
\end{proof}

\begin{claim}\label{CLAIM:case2-put-v-in-V1}
We have $L(v) \subset L_{\mathcal{G}}(1)$.
\end{claim}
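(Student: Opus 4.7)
I prove the claim by contradiction. Suppose there exists $\{x,y\}\in L(v)$ with $x\in V_{i_0}$, $y\in V_{j_0}$, and $\{i_0,j_0\}\notin L_{\Gamma_{t+2}}(1)$; by Claims~\ref{CLAIM:link-empty-inside} and~\ref{CLAIM:case2-no-neighbor-in-V1}, $i_0\ne j_0$ and $1\notin\{i_0,j_0\}$. The strategy is to extend $\{v,x,y\}$ to a sub-hypergraph $F$ of $\mathcal{H}$ on at most $4(t+4)^2$ vertices that is not $\Gamma_{t+2}$-colorable, contradicting the $\mathcal{F}_{t+2}$-freeness of $\mathcal{H}$.

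First, I apply Lemma~\ref{LEMMA:greedily-embedding-Gi} with $T=[t+4]$, $S=\{v,x,y\}$, partition $V_k\setminus\{x,y\}$, and slack $\eta=206t^2\epsilon^{1/4}$ to produce $u_k\in V_k\cap N(v)\cap N(x)\cap N(y)$ for $k\neq 1$ and $u_1\in V_1\cap N(x)\cap N(y)$ (the $N(v)$ requirement being dropped at $k=1$ since $V_1\cap N(v)=\emptyset$ by Claim~\ref{CLAIM:case2-no-neighbor-in-V1}). The density hypotheses are met because $\mathcal{H}'\subseteq\widehat{\mathcal{G}}$ (condition (b)), Claim~\ref{CLAIM:missing-link-few} handles $x,y$ for condition (c), and treating $v$ as a phantom vertex in part~$1$ makes condition (c) for $v$ read precisely as $|L_{\mathcal{G}}(1)\setminus L(v)[W]|\le 206t^2\epsilon^{1/4}n^2$, the bound proved just before Claim~\ref{CLAIM:case2-no-neighbor-in-V1}. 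The size hypothesis (a) is met since $|V_k|\ge n/(t+2)-50t^2\epsilon^{1/4}n$ for $k\in[t+1]\cup\{t+4\}$ and $|V_k\cap N(v)|>\epsilon^{1/4}n$ for $k\in\{t+2,t+3\}$ (as $I_{small}=\{1\}$), both dwarfing the subtractions from Claim~\ref{CLAIM:missing-neighbors-small} applied to $x,y$. Writing $u_{i_0}=x$, $u_{j_0}=y$ and $U=\{u_1,\dots,u_{t+4}\}$, the lemma guarantees that $\mathcal{H}[U]$ is a copy of $\Gamma_{t+2}$ via the natural map and that $L(v)[U]$ contains every pair $\{u_j,u_k\}$ with $\{j,k\}\in L_{\Gamma_{t+2}}(1)$; in particular $\{v,u_{t+1},u_{t+2}\}$ and $\{v,u_{t+3},u_{t+4}\}$ are edges of $\mathcal{H}$.

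Second, consider $F=\mathcal{H}[U\cup\{v\}]$, which has $t+5\le 4(t+4)^2$ vertices, so $F\notin\mathcal{F}_{t+2}$ and there is a homomorphism $\phi\colon V(F)\to V(\Gamma_{t+2})$. By Lemma~\ref{LEMMA:gamma-t-automorphism} applied to $\phi|_U$ (composed with the inverse of the natural map), $\phi|_U$ corresponds to an automorphism of $\Gamma_{t+2}$; after composing $\phi$ with its inverse on the target, I may assume $\phi(u_k)=k$ for all $k$. The two edges above then put $\{t+1,t+2\},\{t+3,t+4\}\in L_{\Gamma_{t+2}}(\phi(v))$, and a direct inspection of the edge list of $\Gamma_{t+2}$ shows this is possible only when $\phi(v)\in[t-1]$: vertex $t$'s link misses $\{t+1,t+2\}$ because the triple $\{t,t+1,t+2\}$ was removed in step~(a) of the crossed blow-up, and each of $t+1,\dots,t+4$ lies in one of the two pairs. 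Since $\phi$ is injective on the edge $\{v,x,y\}$, $\phi(v)\notin\{i_0,j_0\}$, so the transposition $(1\;\phi(v))$ of $[t-1]$ realises $L_{\Gamma_{t+2}}(\phi(v))$ from $L_{\Gamma_{t+2}}(1)$ while fixing $\{i_0,j_0\}$ pointwise. The edge $\{v,x,y\}$ forces $\{i_0,j_0\}\in L_{\Gamma_{t+2}}(\phi(v))$, so it also lies in $L_{\Gamma_{t+2}}(1)$, contradicting the hypothesis.

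The main obstacle is the size hypothesis (a) of Lemma~\ref{LEMMA:greedily-embedding-Gi} in the degenerate regime where $V_{t+2}$ or $V_{t+3}$ is much smaller than the embedding threshold; in that regime the bad pair $\{i_0,j_0\}$ cannot involve $\{t+2,t+3\}$, and one restricts $T$ to $[t+4]\setminus\{t+2,t+3\}$ and invokes a partial version of Lemma~\ref{LEMMA:gamma-t-automorphism} modelled on Lemma~\ref{LEMMA:gamma-t-partial-embedding}. A secondary point is the clone automorphism $(t+1,t+2)(t+3,t+4)$ of $\Gamma_{t+2}$: it may feature in the normalization $\phi(u_k)=k$, but since both $L_{\Gamma_{t+2}}(1)$ and the bad-pair set $\{\{j,t+3\},\{j,t+4\}:j\in[2,t]\}\cup\{\{t+1,t+3\},\{t+2,t+4\}\}$ are invariant under it, the final step is unaffected.
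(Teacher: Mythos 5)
Your proposal has a fundamental problem that prevents it from working: in Case~2, where Claim~\ref{CLAIM:case2-put-v-in-V1} lives, the parts $V_{t+2}$ and $V_{t+3}$ satisfy $|V_{t+2}|,|V_{t+3}|\le 102t^2\epsilon^{1/4}n$. The size hypothesis~(a) of Lemma~\ref{LEMMA:greedily-embedding-Gi} with $\eta=206t^2\epsilon^{1/4}$ and $T=[t+4]$ requires $|V_j|\gtrsim\eta^{1/3}n\asymp\epsilon^{1/12}n$ for all $j\in T$, and $\epsilon^{1/4}\ll\epsilon^{1/12}$ for small $\epsilon$. Thus the hypothesis always fails for $j\in\{t+2,t+3\}$; there is no ``non-degenerate'' regime in Case~2, so the entire first application of the lemma cannot be made. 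Your closing paragraph acknowledges this possibility but then asserts that the bad pair cannot involve $t+2$ or $t+3$, which is backwards: the only pairs $\{i_0,j_0\}$ with $\{1,i_0,j_0\}\notin\Gamma_{t+2}$ always include exactly one of $t+2$, $t+3$. So deleting $\{t+2,t+3\}$ from $T$ does not avoid them, and in any case the remaining outline (a ``partial version of Lemma~\ref{LEMMA:gamma-t-automorphism}'') is not a proof.

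The paper's actual route is structurally different and sidesteps this. It never tries to embed into the small parts: it fixes the hypothesis pair $\{u_{t+1},u_{t+2}\}\in V_{t+1}\times V_{t+2}$ coming from the bad edge of $L(v)$, carves out the neighborhoods $V_i'=V_i\cap N(v)\cap N(u_{t+1})\cap N(u_{t+2})$ (appropriately modified for $i=1$ and $i=t+1$), and then applies Lemma~\ref{LEMMA:greedily-embedding-Gi} only with $T=[t]\cup\{t+1,t+4\}$ — all of which are large — to produce a $K_{t+2}^3$ copy $U$. The one "small-part" vertex $u_{t+2}$ is not found by the lemma; it is already given by the hypothesis, and the paper then compares the two overlapping $K_{t+2}^3$'s $U$ and $U'=(U\cup\{u_{t+1}\})\setminus\{u_{t+1}'\}$, invokes Lemma~\ref{LEMMA:two-K_t+2} and Lemma~\ref{LEMMA:gamma-t-partial-embedding} on $F[U\cup\{u_{t+2}\}]\cong\Gamma_{t+2}\setminus\{t+3\}$, and pins down $\phi(v)\in\{1,t+3\}$ before deriving the contradiction. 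You would need to adopt this ``grow a $K_{t+2}^3$ in the big parts, graft on the given small-part vertex'' strategy rather than trying to build a full copy of $\Gamma_{t+2}$.

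A secondary concern: your normalization $\phi(v)\in[t-1]$ and your description of the ``bad-pair set'' rest on a labeling of $\Gamma_{t+2}$ (with $\{t+1,t+2\}$ as the crossed pair) that is not the one that Observation~\ref{OBS:structure-link}~(c) and Lemma~\ref{LEMMA:gamma-t-partial-embedding} are computed under. If you trace your argument against those codegree formulas you will find, e.g., that $N_{\Gamma_{t+2}}(t+1,t+2)\cap N_{\Gamma_{t+2}}(t+3,t+4)$ is a single vertex, not $[t-1]$, so the final parity step would also have to be reworked. Even granting a consistent relabeling, the main obstruction remains the inapplicability of the lemma with $T=[t+4]$.
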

\begin{proof}
Suppose to the contrary that there exists an edge $e\in L(v)\setminus L_{\mathcal{G}}(1)$.
By symmetry we may assume that $e = \{u_{t+1},u_{t+2}\}$ and $(u_{t+1},u_{t+2})\in V_{t+1}\times V_{t+2}$.
Let
\begin{align}
V_{i}' =
\begin{cases}
V_{i}\cap N(u_{t+1}) \cap N(u_{t+2}) & \text{if } i = 1, \\
V_{i}\cap N(v) \cap N(u_{t+2}) & \text{if } i = t+1, \\
V_{i}\cap N(v) \cap N(u_{t+1}) \cap N(u_{t+2}) & \text{if } i\in [2,t]\cup \{t+4\}.
\end{cases}\notag
\end{align}
Notice that $|V_i'| \ge \frac{n}{2(t+2)} - 2\times 12t\epsilon > 206(t+4)^4\epsilon^{1/12}$.
Applying Lemma~\ref{LEMMA:greedily-embedding-Gi} with $\eta = 206t^2\epsilon^{1/4}$ and $T = S$,
we obtain a vertex $u_i' \in V_i'$ for every $i\in T$ such that the set $U = \{u_i'\colon i\in T\}$ satisfies
\begin{enumerate}[label=(\alph*)]
\item $\mathcal{H}[U] = \mathcal{G}[U] \cong K_{t+2}^{3}$,
\item $L(v)[U\setminus\{u_1\}]\cong K_{t+1}$, and
\item $L(u_i)[U] = L_{\mathcal{G}}(u_i)[U]$ for $i\in \{t+1,t+2\}$.
\end{enumerate}

Let $F = \mathcal{H}[U\cup \{v,u_{t+1},u_{t+2}\}]$ and $U' = \left(U\cup \{u_{t+1}\}\right)\setminus \{u_{t+1}'\}$.
Since $v(F) = t+5 \le 4(t+4)^2$,
by assumption, there is a homomorphism $\phi\colon V(F) \to V(\Gamma_{t+2})$ from $F$ to $\Gamma_{t+2}$.
Since $\mathcal{H}[U] \cong \mathcal{H}[U'] \cong K_{t+2}^{3}$,
we have $\Gamma_{t+2}[\phi(U)] \cong \Gamma_{t+2}[\phi(U')] \cong K_{t+2}^3$.
Since $|\phi(U) \cap \phi(U')| = |U\cap U'| = t+1$,
it follows from Lemma~\ref{LEMMA:two-K_t+2} that $\phi(U) = \phi(U')$ and $\phi(u_{t+1}) = \phi(u_{t+1}')$.

Let $U_1 = U\cup\{u_{t+2}\}$.
Since $F[U_1]\cong \Gamma_{t+2}\setminus\{t+3\}$, it follows from Lemma~\ref{LEMMA:gamma-t-partial-embedding}
that $\phi(\{u_i'\colon i\in[t]\}) = [t]$ (and $\phi(u'_{t}) = t$ if $t\ge 3$),
$\phi(\{u_{t+1}',u_{t+2},u_{t+4}'\}) \subset [t+1,t+4]$,
and $\phi(\{u_{t+1}',u_{t+4}'\}) \in \{\{t+1,t+4\},\{t+2,t+3\}\}$.
By symmetry we may assume that $\phi(u_{i}') = i$ for $i\in S$ and $\phi(u_{t+2}) = t+2$.
Hence $\phi(u_{t+1}') = \phi(u_{t+1}) = t+1$.
Since $v$ is adjacent to all vertices in $U_1\setminus\{u_{1}\}$,
we have $\phi(v)\in [t+4]\setminus\phi(U_1\setminus\{u_{1}\}) = \{1,t+3\}$.
If $\phi(v) = t+3$, then $\{v,u_{t+1},u_{t+2}\}\in F$ implies that
$\{t+1,t+2,t+3\} = \{\phi(v),\phi(u_{t+1}),\phi(u_{t+2})\} \in \Gamma_{t+2}$, a contradiction.
Therefore, we may assume that $\phi(v) = 1$.
Then $\left\{\{u_1',u_{t+1}',u_{t+4}'\},\{u_1',u_{t+2},u_{t+4}'\},\{v,u_{t+1},u_{t+2}\}\}\right\}\subset F$ implies
that
\begin{align}
& \left\{\{1,t+1,t+4\},\{1,t+2,t+4\},\{1,t+1,t+2\}\}\right\} \notag\\
=&\left\{\{\phi(u_1'),\phi(u_{t+1}'),\phi(u_{t+4}')\},
        \{\phi(u_1'),\phi(u_{t+2}),\phi(u_{t+4}')\},\{\phi(v),\phi(u_{t+1}),\phi(u_{t+2})\}\right\}
\subset\Gamma_{t+2}. \notag
\end{align}
This means that the triangle $\{t+1,t+2,t+4\}$ is contained in the induced subgraph of $L_{\Gamma_{t+2}}(1)$ on $\{t+1, t+2, t+3, t+4\}$,
contradicts the fact that the induced subgraph of $L_{\Gamma_{t+2}}(1)$ on $\{t+1, t+2, t+3, t+4\}$ is a copy of $C_4$.
\end{proof}

\medskip

Recall that $S = [t]\cup \{t+1, t+2\}$.

\noindent\textbf{Case 2.2}: $I_{small}\cap S = \emptyset$.

Recall that $R$ is a graph on $[t+4]$ in which two vertices $i$ and $j$ are adjacent
iff there exists $e\in L(v)$ such that $e\cap V_{i} \neq \emptyset$ and $e\cap V_{j} \neq \emptyset$.
Recall the observation that $L(v)\subset R[V_1,\ldots,V_{t+4}]$.

\begin{claim}\label{CALIM:case2-Vj+2-nonempty}
If $V_{t+2}\cup V_{t+3}\neq\emptyset$, then
        either $L(v) \subset L_{\mathcal{G}}(t+2)$ or $L(v) \subset L_{\mathcal{G}}(t+3)$.
\end{claim}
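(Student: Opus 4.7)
The plan is to argue by contradiction via the greedy embedding machinery. Suppose both $L(v)\not\subset L_{\mathcal{G}}(t+2)$ and $L(v)\not\subset L_{\mathcal{G}}(t+3)$, and pick witnesses $e_{+}=\{a,b\}\in L(v)\setminus L_{\mathcal{G}}(t+2)$ and $e_{-}=\{a',b'\}\in L(v)\setminus L_{\mathcal{G}}(t+3)$. Using the Case~2.2 bound $|N(v)\cap V_{k}|\ge\epsilon^{1/4}n$ for $k\in S$ together with Claim~\ref{CLAIM:missing-neighbors-small} applied to $a,b,a',b'$, every set $V_{k}\cap N(v)\cap N(a)\cap N(b)\cap N(a')\cap N(b')$ with $k\in S$ has linear size. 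Lemma~\ref{LEMMA:greedily-embedding-Gi} with $T=S$, auxiliary set $\{v,a,b,a',b'\}$, and an $\eta$ of order $\epsilon$ then yields $u_{k}\in V_{k}$ for $k\in S$ so that $U=\{u_{k}:k\in S\}$ satisfies $\mathcal{H}[U]=\mathcal{G}[U]\cong K_{t+2}^{3}$ and $L(x)[U]=L_{\mathcal{G}}(x)[U]$ for every $x\in\{v,a,b,a',b'\}$.

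Form $F=\mathcal{H}[U\cup\{v,a,b,a',b'\}]$; since $v(F)\le t+7\le 4(t+4)^{2}$, there is a homomorphism $\phi\colon V(F)\to[t+4]$ to $\Gamma_{t+2}$. By Lemma~\ref{LEMMA:two-K_t+2}, $\phi(U)\in\{[t]\cup\{t+1,t+4\},\,[t]\cup\{t+2,t+3\}\}$; after composing with the clone-swap automorphism of $\Gamma_{t+2}$ exchanging $t+1\leftrightarrow t+3$ and $t+2\leftrightarrow t+4$ if needed, assume $\phi(U)=[t]\cup\{t+1,t+4\}$ and (by Lemma~\ref{LEMMA:gamma-t-automorphism}) $\phi(u_{k})=k$ for every $k\in S$. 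Since $u_{k}\in N(v)$ for all $k\in S$, we obtain $\phi(v)\in[t+4]\setminus\phi(U)=\{t+2,t+3\}$. The key intermediate step is to show that $\phi$ agrees with the original colouring $\phi_{0}$ on each of $a,b,a',b'$; this follows from property~(b) of the greedy lemma together with the fact that the map $i\mapsto L_{\Gamma_{t+2}}(i)[\phi(U)]$ is \emph{uniquely detecting}, meaning that $L_{\Gamma_{t+2}}(\phi_{0}(x))[\phi(U)]\subset L_{\Gamma_{t+2}}(j)[\phi(U)]$ forces $j=\phi_{0}(x)$.

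With these ingredients the contradiction is immediate. The edge $\{v,a,b\}\in F$ gives $\{\phi(v),\phi_{0}(a),\phi_{0}(b)\}\in\Gamma_{t+2}$, while $e_{+}\notin L_{\mathcal{G}}(t+2)$ translates to $\{t+2,\phi_{0}(a),\phi_{0}(b)\}\notin\Gamma_{t+2}$; therefore $\phi(v)\ne t+2$. Symmetrically, $\{v,a',b'\}\in F$ together with $e_{-}\notin L_{\mathcal{G}}(t+3)$ gives $\phi(v)\ne t+3$, contradicting $\phi(v)\in\{t+2,t+3\}$. The hypothesis $V_{t+2}\cup V_{t+3}\ne\emptyset$ can also be exploited to streamline the argument by including a fixed $w\in V_{t+2}\cup V_{t+3}$ in the auxiliary set, which pins $\phi|_{U}$ directly and removes the detour through the clone-swap.

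The technical crux is verifying that $i\mapsto L_{\Gamma_{t+2}}(i)[\phi(U)]$ is uniquely detecting. For $i\in S$ this link is the $(t+1)$-clique $K_{t+1}$ on $\phi(U)\setminus\{i\}$, while for $i\in\{t+2,t+3\}$ it is the $t$-clique on $[t]$ together with a $t$-edge star centred at $t+1$ or $t+4$ respectively. The cliques for different $i\in S$ are supported on different vertex subsets of $\phi(U)$; neither of the two $K_t$-plus-star graphs contains a $K_{t+1}$; and the two $K_t$-plus-star graphs differ by the location of their star centre. A routine case check against these link signatures forces $\phi=\phi_{0}$ on $\{a,b,a',b'\}$, which is the content-heavy but mechanical step that makes the whole argument work.
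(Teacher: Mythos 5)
Your overall plan (greedy embedding plus homomorphism analysis) is of the right type, and the final remark about including a fixed $w\in V_{t+2}\cup V_{t+3}$ in the auxiliary set is actually not a ``streamlining'' but exactly the idea that makes the claim work. As written, however, the argument has two genuine gaps, both coming from the fact that $F[U]\cong K_{t+2}^{3}$ gives far too little information about $\phi|_U$.

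First, the step ``since $u_k\in N(v)$ for all $k\in S$, we obtain $\phi(v)\in[t+4]\setminus\phi(U)=\{t+2,t+3\}$'' is not justified. For $\phi(v)\neq\phi(u_k)$ you need $\{v,u_k\}\in\partial F$, not merely $u_k\in N_{\mathcal{H}}(v)$; the witnessing edge of $\mathcal{H}$ need not lie inside $U\cup\{v,a,b,a',b'\}$. Your auxiliary set supplies only the two edges $\{v,a,b\}$ and $\{v,a',b'\}$, so $v$ is adjacent in $\partial F$ to at most $4$ vertices, whereas $S$ has $t+2\ge 6$ elements. Related to this, the condition ``$L(x)[U]=L_{\mathcal{G}}(x)[U]$ for $x=v$'' is not even well-formed, since $v\notin V(\mathcal{G})$ and Lemma~\ref{LEMMA:greedily-embedding-Gi} requires the auxiliary vertices to lie inside the blow-up partition. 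In the paper this is repaired by taking $U_1$ to consist of witnesses $e_{ij}$ for \emph{every} edge of $R$, which (because $|N(v)\cap V_i|\ge\epsilon^{1/4}n>0$ for $i\in S$) forces $U_1$ to meet every part $V_i$ with $i\in S$, and then $v$ is adjacent in $\partial F$ to each of those witnesses through the edges $e_{ij}\cup\{v\}\in F$.

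Second, the claim ``(by Lemma~\ref{LEMMA:gamma-t-automorphism}) $\phi(u_k)=k$ for every $k\in S$'' is incorrect. Lemma~\ref{LEMMA:gamma-t-automorphism} concerns homomorphisms from $\Gamma_{t+2}$ to itself, but here $F[U]\cong K_{t+2}^{3}$, a graph with trivial internal structure: $\phi|_U$ can be \emph{any} bijection from $U$ to $\phi(U)$, and the automorphism group of $\Gamma_{t+2}$ is nowhere near large enough to normalize an arbitrary such bijection (for $t\ge 3$ it fixes $[t-1]$ and $t$, and on $\{t+1,\dots,t+4\}$ it acts only as a Klein four-group). Because $\phi|_U$ is not pinned down, your ``uniquely detecting'' step only yields $\phi(a)=\phi(u_{\phi_0(a)})$, not $\phi(a)=\phi_0(a)$, and the final contradiction ``$\{\phi(v),\phi_0(a),\phi_0(b)\}\in\Gamma_{t+2}$ vs.\ $\{t+2,\phi_0(a),\phi_0(b)\}\notin\Gamma_{t+2}$'' does not follow. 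This is precisely why the paper's proof adds $u_{t+2}\in V_{t+2}$ to the auxiliary set, obtains $F[U\cup\{u_{t+2}\}]\cong\Gamma_{t+2}\setminus\{t+3\}$, and invokes Lemma~\ref{LEMMA:gamma-t-partial-embedding}, which \emph{does} rigidify $\phi$ on $U\cup\{u_{t+2}\}$. The hypothesis $V_{t+2}\cup V_{t+3}\neq\emptyset$ exists in the claim exactly to make this vertex available; treating its inclusion as optional misses the point. A minor further issue: the fact $\phi(U)\in\{[t]\cup\{t+1,t+4\},[t]\cup\{t+2,t+3\}\}$ is Observation~\ref{OBS:structure-link}~(a), not Lemma~\ref{LEMMA:two-K_t+2}.
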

\begin{proof}
For every $ij\in R$ let $e_{ij} \in L(v)$ be an edge such that $e_{ij}\cap V_{i}\neq \emptyset$
and $e_{ij}\cap V_{j}\neq \emptyset$.
Let $U_1 = \bigcup_{ij\in R_2}e_{ij}$ be a vertex subset of $V$. Note that $|U_1|\le 2\binom{t+4}{2}$.

By symmetry we may assume that $V_{t+2} \neq \emptyset$.
Fix a vertex $u_{t+2} \in V_{t+2}$ (it is possible that $u_{t+2}\in U_1$ as well).
For every $i\in S$ let $V_i' = V_i\cap N(v) \cap N(u_{t+2}) \cap \left(\bigcap_{u\in U_1\setminus V_i}N(u)\right)$.
Notice that $|V_i'| \ge \frac{n}{2(t+2)} - (2\binom{t+4}{2}+1)12t\epsilon > 4(t+4)^3\epsilon^{1/3}n$.
Applying Lemma~\ref{LEMMA:greedily-embedding-Gi} with $\eta = 4t\epsilon$ and $T = S$
we obtain $u_{i}\in V'_i$ for every $i\in T$ such that the set $U = \{u_i\colon i\in T\}$ satisfies
\begin{enumerate}[label=(\alph*)]
\item $\mathcal{H}[U] = \mathcal{G}[U] \cong K_{t+2}^{3}$,
\item $L(u)[U] = L_{\mathcal{G}}(u)[U]$ for all $u\in U_1$, and
\item $L(u_{t+2})[U] = L_{\mathcal{G}}(u_{t+2})[U]$.
\end{enumerate}

Let $F = \mathcal{H}[U\cup U_1\cup \{v,u_{t+2}\}]$. Since $v(F) \le 2\binom{t+4}{2}+t+5 \le 4(t+4)^3$,
by assumption there is a homomorphism $\phi\colon V(F) \to V(\Gamma_{t+2})$ from $F$ to $\Gamma_{t+2}$.
Since $F[U\cup \{u_{t+2}\}] \cong \Gamma_{t+2}\setminus\{t+3\}$, it follows from Lemma~\ref{LEMMA:gamma-t-partial-embedding}
that $\phi(\{u_i\colon i\in [t]\}) = [t]$ (and $\phi(u_{t}) = t$ if $t\ge 3$),
$\phi(\{u_{t+1},u_{t+2},u_{t+4}\}) \subset [t+1,t+4]$, and
$\phi(\{u_{t+1},u_{t+4}\})\in \{\{t+1,t+4\},\{t+2,t+3\}\}$.
By symmetry, we may assume that $\phi(u_{i}) = i$ for $i\in [t]\cup \{t+1,t+2,t+4\}$.
For every $i\in [t]\cup \{t+1,t+4\}$ and every $u\in U_{1}\cap V_{i}$,
since $L(u)[U] = L(u_i)[U] = L_{\mathcal{G}}(u_i)[U]$
and $u$ is adjacent to all vertices in $U\setminus \{u_i\}$,
we have $\phi(u) = \phi(u_i) = i$.
Finally, since $v$ is adjacent to all vertices in $\{u_i\colon i\in [t]\cup \{t+1,t+3\}\}$,
we have $\phi(v) \in [t+4]\setminus\phi(\{u_i\colon i\in [t]\cup \{t+1,t+3\}\}) = \{t+2,t+3\}$.
If $\phi(v) = t+2$, then $\phi(R) \subset L_{\Gamma_{t+2}}(\phi(v)) = L_{\Gamma_{t+2}}(t+2)$,
which means that $L(v)\subset L_{\mathcal{G}}(t+2)$.
If $\phi(v) = t+3$, then $\phi(R) \subset L_{\Gamma_{t+2}}(\phi(v)) = L_{\Gamma_{t+2}}(t+3)$,
which means that $L(v)\subset L_{\mathcal{G}}(t+3)$.
\end{proof}

Now we may assume that $V_{j+2}\cup V_{j+3}=\emptyset$.

\begin{claim}\label{CLAIM:case2-structure-of-R}
We have either $L(v)\subset L_{\mathcal{G}}(t+2)$ or $L(v)\subset L_{\mathcal{G}}(t+3)$.
\end{claim}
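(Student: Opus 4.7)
The plan parallels the argument for Claim~\ref{CALIM:case2-Vj+2-nonempty}, simplified by the absence of any vertex in $V_{t+2} \cup V_{t+3}$. Since this set is empty, every vertex of $V$ lies in $\bigcup_{i \in S} V_i$ where $S = [t] \cup \{t+1, t+4\}$, and Claim~\ref{CLAIM:link-empty-inside} forces $L(v) \subset R[V_1, \ldots, V_{t+4}]$ for some graph $R \subset \binom{S}{2}$. For each $ij \in R$, pick a representative $e_{ij} = \{a_{ij}, b_{ij}\} \in L(v)$ with $a_{ij} \in V_i$ and $b_{ij} \in V_j$, and set $U_1 = \bigcup_{ij \in R} e_{ij}$, so $|U_1| \le 2\binom{t+2}{2}$.

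Since $I_{\mathrm{small}} \cap S = \emptyset$, for each $i \in S$ we have $|V_i \cap N(v)| > \epsilon^{1/4} n$, and Claim~\ref{CLAIM:missing-neighbors-small} gives $|V_j \setminus N(u)| \le 12 t \epsilon n$ for every $u \in U_1$ and $j \in S \setminus \{i\}$. Hence the sets $V_i' := V_i \cap N(v) \cap \bigcap_{u \in U_1 \setminus V_i} N(u)$ have size at least $4(t+4)^3 \epsilon^{1/3} n$ (after absorbing the losses by the small-$\epsilon$ inequality $\epsilon^{1/4} \gg |U_1| \epsilon$). Apply Lemma~\ref{LEMMA:greedily-embedding-Gi} with $\eta = 4t\epsilon$ and $T = S$ to obtain $u_i \in V_i'$ for each $i \in S$ such that $U := \{u_i : i \in S\}$ satisfies $\mathcal{H}[U] = \mathcal{G}[U] \cong K_{t+2}^3$ and $L(u)[U] = L_{\mathcal{G}}(u)[U]$ for all $u \in U_1$.

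Form $F = \mathcal{H}[U \cup U_1 \cup \{v\}]$, which has at most $4(t+4)^2$ vertices and hence is $\Gamma_{t+2}$-colorable via some homomorphism $\phi \colon V(F) \to V(\Gamma_{t+2})$. Since $F[U] \cong K_{t+2}^3$, by Observation~\ref{OBS:structure-link}(a) and the automorphisms of $\Gamma_{t+2}$ provided by Lemma~\ref{LEMMA:gamma-t-automorphism}, we may assume $\phi(u_i) = i$ for all $i \in S$. For each $u \in U_1 \cap V_j$, the identity $L(u)[U] = L_{\mathcal{G}}(u_j)[U]$ and $u$'s adjacency to every $u_i$ (with $i \in S \setminus \{j\}$) force $\phi(u) \in \{j, t+2, t+3\}$; then examining edges of the form $\{u, u_{j'}, u_{t+1}\}$ (and, when these are insufficient, $\{u, u_{j'}, u_{t+4}\}$) for an appropriate $j' \in [t] \setminus \{j\}$ and noting that neither $\{t+2, j', t+1\}$ nor (for $j' \in [t-1]$) $\{t+3, j', t+1\}$ belongs to $\Gamma_{t+2}$ rules out $\phi(u) \in \{t+2, t+3\}$; the symmetric analysis handles $j \in \{t+1, t+4\}$ by inspecting $L_{\Gamma_{t+2}}(t+1)$ and $L_{\Gamma_{t+2}}(t+4)$ restricted to $S$. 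Hence $\phi(u) = j$. Since $v$ is adjacent to every $u_i$ (as $u_i \in N(v)$), $\phi(v) \notin \phi(U) = S$, giving $\phi(v) \in \{t+2, t+3\}$.

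Finally, for every $ij \in R$, the edge $\{v, a_{ij}, b_{ij}\} \in F$ maps to $\{\phi(v), i, j\} \in \Gamma_{t+2}$, so $\{i, j\} \in L_{\Gamma_{t+2}}(\phi(v))$; combined with $L(v) \subset R[V_1, \ldots, V_{t+4}]$, this yields $L(v) \subset L_{\mathcal{G}}(\phi(v))$ with $\phi(v) \in \{t+2, t+3\}$, proving the claim. The main obstacle is the case analysis pinning down $\phi(u) = j$ for $u \in U_1 \cap V_j$: the delicate sub-cases arise for small $t$ (notably $t = 2$), where fewer auxiliary indices $j'$ are available, so the triples witnessing $\phi(u) \neq t+2, t+3$ must sometimes be chosen from the $u_{t+4}$-side instead of the $u_{t+1}$-side, and one must verify that the specific pattern of the crossed blowup always supplies such a witnessing triple.
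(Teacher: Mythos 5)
Your overall framework mirrors the paper's proof: pick one representative pair $e_{ij}\in L(v)$ per edge $ij\in R$, set $U_1$, shrink each $V_i$ to $V_i'$, apply Lemma~\ref{LEMMA:greedily-embedding-Gi} with $T=S=[t]\cup\{t+1,t+4\}$ to get $U$ with $\mathcal{H}[U]\cong K_{t+2}^3$, and then deduce from a homomorphism $\phi\colon F\to\Gamma_{t+2}$ that $\phi(v)\in\{t+2,t+3\}$. The substantive difference, and the place where your argument breaks, is how you rule out $\phi(u)\in\{t+2,t+3\}$ for $u\in U_1\cap V_j$.

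Your claim that $\{t+2,j',t+1\}\notin\Gamma_{t+2}$ is false for $j'\in[t-1]$. Recall the construction: for $i\in[t-1]$ the crossed blowup inserts the four edges $\{i,t+1,t+2\}$, $\{i,t+1,t+4\}$, $\{i,t+3,t+2\}$, $\{i,t+3,t+4\}$, while for $i=t$ it inserts $\{t,t+1,t+3\}$, $\{t,t+1,t+4\}$, $\{t,t+2,t+3\}$, $\{t,t+2,t+4\}$. So $\{j',t+1,t+2\}$ \emph{is} an edge of $\Gamma_{t+2}$ for every $j'\in[t-1]$, and a single witness triple $\{u,u_{j'},u_{t+1}\}$ gives no contradiction when $\phi(u)=t+2$. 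You gesture at the fallback $\{u,u_{j'},u_{t+4}\}$ "when these are insufficient," and indeed $\{j',t+2,t+4\}\notin\Gamma_{t+2}$ for $j'\in[t-1]$; but as written the parity of which index $j'$ and which of $u_{t+1},u_{t+4}$ works depends on both $j$ and the hypothetical value of $\phi(u)$, the asserted membership facts are wrong, and the case $j\in\{t+1,t+4\}$ is only waved at. You yourself flag that the case analysis "must be chosen \ldots\ and one must verify"; the proposal does not actually do that verification.

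The paper sidesteps this entirely. Lemma~\ref{LEMMA:greedily-embedding-Gi} is used so that for each $u\in U_1\cap V_i$ the link $L(u)[U_2\setminus\{u_i\}]$ equals $L_{\mathcal G}(u)[U_2\setminus\{u_i\}]\cong K_{t+1}$, because $\mathcal{G}[U_2]\cong K_{t+2}^3$. Under $\phi$ this $K_{t+1}$ lands on a $(t+1)$-subset of $\phi(U_2)=[t]\cup\{t+1,t+4\}$ inside $L_{\Gamma_{t+2}}(\phi(u))$. A short check of Observation~\ref{OBS:structure-link}~(c) shows that $L_{\Gamma_{t+2}}(t+2)$ and $L_{\Gamma_{t+2}}(t+3)$, restricted to $[t]\cup\{t+1,t+4\}$, are missing at least one pair in every $(t+1)$-subset (e.g.\ $L_{\Gamma_{t+2}}(t+2)$ lacks $\{t,t+1\}$, all $\{i,t+4\}$ with $i\in[t-1]$, and $\{t+1,t+4\}$), so neither contains $K_{t+1}$ there. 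This rules out $\phi(u)\in\{t+2,t+3\}$ in one stroke, with no case split on $j$, $j'$, or the small values of $t$. To repair your proof, replace the triple-by-triple analysis with this $K_{t+1}$-in-the-link argument.
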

\begin{proof}
For each edge $ij \in R$ let $e_{i,j} \in L(v)$ be an edge with one endpoint in $V_i$
and the other endpoint in $V_j$.
Let $U_1 = \bigcup_{ij\in R}e_{i,j}$ be a vertex subset of $V$ and note that $|U_1| \le 2\binom{t+2}{2}$.
Define
\begin{align}
V_i' = V_i \cap N(v) \cap \left(\bigcap_{u\in U_1\setminus V_i}N(u)\right) \quad\text{for } i \in [t]\cup \{t+1,t+4\}. \notag
\end{align}
Notice that $|V_i'| \ge \frac{n}{2(t+2)} - (2\binom{t+4}{2}+1)12t\epsilon > 4(t+4)^3\epsilon^{1/3}n$.
Applying Lemma~\ref{LEMMA:greedily-embedding-Gi} with $\eta = 4t\epsilon$ and $T = [t]\cup \{t+1,t+4\}$
we obtain $u_i \in V_i'$ for $i\in T$ such that the set $U_2 = \{u_i\colon i\in T\}$ satisfies
\begin{enumerate}[label=(\alph*)]
  \item $\mathcal{H}[U_2] =\mathcal{G}[U_2] \cong K_{t+2}^3$, and
  \item $L(u)[U_2\setminus\{u_i\}] =L_{\mathcal{G}}(u)[U_2\setminus\{u_i\}]\cong K_{t+1}$
  for all $u\in U_1\cap V_i$ and $i\in S$.
\end{enumerate}
Let $F = \mathcal{H}[U_1\cup U_2 \cup \{v\}]$.
Suppose that there exists a homomorphism $\phi\colon V(F) \to V(\Gamma_{t+2})$ from $F$ to $\Gamma_{t+2}$.
Since the set $F[U_2] \cong K_{t+2}^3$ we have either $\phi(U_2) = [t]\cup\{t+1,t+4\}$
or $\phi(U_2) = [t]\cup\{t+2,t+3\}$.
By symmetry we may assume that the former case holds and $\phi(u_i) = i$ for $i\in [t]\cup\{t+1,t+4\}$.
Since $L_{\Gamma_{t+2}}(t+2)$ and $L_{\Gamma_{t+2}}(t+3)$ do not contain $K_{t+1}$ as a subgraph,
it follows from $(b)$ that for every $u\in V_i\cap U_1$ and $i\in [t]\cup\{t+1,t+4\}$
we have $\phi(u) \not\in\{t+2,t+3\}$.
Moreover, since $u$ is adjacent to all vertices in $U_2\setminus \{u_i\}$ in $\partial F$,
we have $\phi(u)\not\in \phi(U_2\setminus \{u_i\})$ as well.
Therefore, $\phi(u) = \phi(u_i) = i$.
Finally, since $U_2\cup \{v\}$ is $2$-covered in $F$, we have $\phi(v) \in \{t+2,t+3\}$.
If $\phi(v) = t+2$, then we have $L(v)\subset L_{\mathcal{G}}(t+2)$.
If $\phi(v) = t+3$, then we have $L(v)\subset L_{\mathcal{G}}(t+3)$.
\end{proof}
%
This completes the proof of Theorem~\ref{THM:main-sec1.1}~(d).
\end{proof}
\section{Feasible region}\label{SEC:proof-feasible-region}
We prove Theorem~\ref{THM:feasibe-region} in this section.
First we need the following simple corollary of Theorem~\ref{THM:main-sec1.1}~(d).
For convenience, we will keep using $t+2$ instead of $t$ in this section.

Recall that for an $n$-vertex $r$-graph $\mathcal{H}$ the edge density of $\mathcal{H}$ is $\rho(mathcal{H}) = |\mathcal{H}|/\binom{n}{r}$, and the shadow density of $\mathcal{H}$ is $|\partial\mathcal{H}|/\binom{n}{r-1}$.

\begin{corollary}\label{CORO:stability}
For every integer $t\ge 2$
there exist constants $\epsilon_0>0$ and $N_0$ such that the following statement holds for all $\epsilon\le \epsilon_0$
and $n\ge N_0$.
Suppose that $\mathcal{H}$ is an $n$-vertex $\mathcal{F}_{t+2}$-free $3$-graph with
$\rho(\mathcal{H})\ge  \frac{t(t+1)}{(t+2)^2} - \epsilon$.
Then there exists a set $Z_{\epsilon}\subset V(\mathcal{H})$ of size at most $\epsilon^{1/2}n$
such that $\mathcal{H}\setminus Z_{\epsilon}$ is $\Gamma_{t+2}$-colorable and $\delta(\mathcal{H}) \ge \left( \frac{t(t+1)}{2(t+2)^2} - 3\epsilon^{1/2}\right)n^2$.
\end{corollary}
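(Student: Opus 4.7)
The plan is to reduce the density hypothesis to the minimum-degree hypothesis of Theorem~\ref{THM:main-sec1.1}~(d) via a standard iterative vertex-pruning. Write $\lambda = \lambda(\Gamma_{t+2}) = \frac{t(t+1)}{6(t+2)^2}$, so that $\pi(\mathcal{F}_{t+2}) = 6\lambda$ and the extremal minimum-degree density equals $3\lambda = \frac{t(t+1)}{2(t+2)^2}$. Using Theorem~\ref{THM:main-sec1.1}~(d), fix constants $\epsilon_1>0$ and $N_1\in\mathbb{N}$ so that every $\mathcal{F}_{t+2}$-free $3$-graph on $m\ge N_1$ vertices with minimum degree at least $(3\lambda-\epsilon_1)m^2$ is $\Gamma_{t+2}$-colorable. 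Then choose $\epsilon_0>0$ so small that $3\sqrt{\epsilon_0}\le \epsilon_1$ and $N_0$ so large that $(1-\sqrt{\epsilon_0})N_0\ge N_1$.

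Given $\mathcal{H}$ satisfying the density hypothesis, I would construct $Z_\epsilon$ greedily: put $\mathcal{H}_0=\mathcal{H}$ and, while the current $3$-graph $\mathcal{H}_i$ contains a vertex $v$ with $d_{\mathcal{H}_i}(v)<(3\lambda-3\sqrt{\epsilon})n^2$, delete $v$ to form $\mathcal{H}_{i+1}$; otherwise stop. Let $Z_\epsilon$ collect every deleted vertex, set $\mathcal{H}'=\mathcal{H}\setminus Z_\epsilon$ and $m=n-|Z_\epsilon|$. By construction $\delta(\mathcal{H}')\ge (3\lambda-3\sqrt{\epsilon})n^2\ge (3\lambda-3\sqrt{\epsilon})m^2$, so once the bound $|Z_\epsilon|\le \sqrt{\epsilon}n$ is in hand we will have $\delta(\mathcal{H}')\ge (3\lambda-\epsilon_1)m^2$ and $m\ge N_1$, and Theorem~\ref{THM:main-sec1.1}~(d) will deliver that $\mathcal{H}'$ is $\Gamma_{t+2}$-colorable, completing the proof.

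The only computation is the bound $|Z_\epsilon|\le \sqrt{\epsilon}n$. Each removed vertex carries away at most $(3\lambda-3\sqrt{\epsilon})n^2$ edges, so
\begin{align}
|\mathcal{H}'|\ge |\mathcal{H}|-|Z_\epsilon|\,(3\lambda-3\sqrt{\epsilon})n^2. \notag
\end{align}
On the other hand, $\mathcal{H}'$ is $\mathcal{F}_{t+2}$-free on $m$ vertices, so Theorem~\ref{THM:main-sec1.1}~(a) gives $|\mathcal{H}'|\le \lambda m^3=\lambda n^3-3\lambda n^2|Z_\epsilon|+O(n|Z_\epsilon|^2)$. Combining these with the density hypothesis, which provides $|\mathcal{H}|\ge \lambda n^3-\tfrac{\epsilon}{6}n^3-O(n^2)$, and cancelling the $\lambda n^3$ and $-3\lambda n^2|Z_\epsilon|$ terms yields
\begin{align}
3\sqrt{\epsilon}\, n^2|Z_\epsilon|\le \tfrac{\epsilon}{6}n^3+O(n|Z_\epsilon|^2)+O(n^2). \notag
\end{align}
Using the trivial bound $|Z_\epsilon|\le n$ to absorb the $O(n|Z_\epsilon|^2)$ term and taking $N_0$ large to swallow the $O(n^2)$ error, division by $3\sqrt{\epsilon}\,n^2$ gives $|Z_\epsilon|\le \sqrt{\epsilon}n/18\le \sqrt{\epsilon}n$, as required.

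The proof is routine; the only minor obstacle is the calibration of the pruning threshold $3\sqrt{\epsilon}$, which must simultaneously be small enough that the resulting minimum-degree bound satisfies the hypothesis of Theorem~\ref{THM:main-sec1.1}~(d) (forcing $3\sqrt{\epsilon_0}\le \epsilon_1$) and large enough that the term $3\sqrt{\epsilon}\,n^2|Z_\epsilon|$ on the left of the last display dominates the $\epsilon n^3/6$ term on the right whenever $|Z_\epsilon|>\sqrt{\epsilon}n$; both constraints are satisfied with the choice above.
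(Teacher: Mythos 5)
Your overall plan -- greedy vertex pruning until the minimum degree exceeds $(3\lambda-3\sqrt{\epsilon})n^2$ followed by an appeal to Theorem~\ref{THM:main-sec1.1}~(d) -- is exactly the approach the paper sketches (it defers the calculation to Theorem~4.1 of~\cite{LMR1}), and your calibration $3\sqrt{\epsilon_0}\le\epsilon_1$ and $(1-\sqrt{\epsilon_0})N_0\ge N_1$ is also right. The gap is in the bound on $|Z_\epsilon|$: the step ``using the trivial bound $|Z_\epsilon|\le n$ to absorb the $O(n|Z_\epsilon|^2)$ term'' does not work. Spelled out, that term is $3\lambda n|Z_\epsilon|^2-\lambda|Z_\epsilon|^3\le 3\lambda n|Z_\epsilon|^2$, and bounding $|Z_\epsilon|\le n$ only gives $3\lambda n|Z_\epsilon|^2\le 3\lambda n^2|Z_\epsilon|$. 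Substituting this back, your last display becomes
\begin{align}
\bigl(3\sqrt{\epsilon}-3\lambda\bigr)\,n^2|Z_\epsilon|\ \le\ \tfrac{\epsilon}{6}n^3+O(n^2),\notag
\end{align}
and since $3\sqrt{\epsilon}<3\lambda$ once $\epsilon$ is small, the coefficient on the left is negative and the inequality is vacuous: it gives no upper bound on $|Z_\epsilon|$ at all. The issue is that you are implicitly treating $O(n|Z_\epsilon|^2)$ as $O(\epsilon n^3)$, which would require $|Z_\epsilon|=O(\sqrt{\epsilon}\,n)$ -- the very thing you are trying to prove.

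The repair is a contradiction argument pinned at $k_0=\lceil\sqrt{\epsilon}\,n\rceil$ rather than a direct bound on the final $|Z_\epsilon|$. Since the pruning removes one vertex per step, if $|Z_\epsilon|>\sqrt{\epsilon}\,n$ there is an intermediate $3$-graph $\mathcal{H}_{k_0}$ obtained after exactly $k_0$ deletions; it is still $\mathcal{F}_{t+2}$-free, so $|\mathcal{H}_{k_0}|\le\lambda(n-k_0)^3$, while $|\mathcal{H}_{k_0}|\ge|\mathcal{H}|-k_0(3\lambda-3\sqrt{\epsilon})n^2$. Plugging $k_0\approx\sqrt{\epsilon}\,n$ into both sides and cancelling the common $\lambda n^3$ and $-3\lambda\sqrt{\epsilon}n^3$ terms, the surviving leading contribution is
\begin{align}
\Bigl(3-\tfrac{1}{6}-3\lambda\Bigr)\epsilon n^3+\lambda\,\epsilon^{3/2}n^3\ \le\ O(n^2),\notag
\end{align}
and because $\lambda=\lambda(\Gamma_{t+2})<1/6$ we have $3-\tfrac16-3\lambda>0$, so this is false for $n$ large, yielding the contradiction. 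The numerical point your argument misses is that at $k=\sqrt{\epsilon}\,n$ the term $3\sqrt{\epsilon}\,kn^2=3\epsilon n^3$ carries coefficient $3$, which strictly exceeds both the coefficient $1/6$ of the density slack and the coefficient $3\lambda$ of the quadratic error; evaluating at a single carefully chosen $k_0$ is what lets the linear term win, whereas your direct manipulation lets the quadratic term dominate. (Minor separate remarks: the density hypothesis in the Corollary has a typo and should read $\rho(\mathcal{H})\ge\frac{t(t+1)}{(t+2)^2}-\epsilon=6\lambda-\epsilon$, which you used correctly, and the conclusion should of course read $\delta(\mathcal{H}\setminus Z_\epsilon)$ rather than $\delta(\mathcal{H})$, as you also tacitly corrected.)
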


The idea for proving Corollary~\ref{CORO:stability} is to show that after removing a few vertices with small degree
from $\mathcal{H}$ the remaining $3$-graph has large minimum degree, and hence, by Theorem~\ref{THM:main-sec1.1}~(d),
it is $\Gamma_{t+2}$-colorable.
We refer the reader to the proof of Theorem~4.1 in~\cite{LMR1} for detailed calculations.

\begin{lemma}\label{LEMMA:feasible-region-shadow}
Suppose that $\mathcal{H}$ is an $n$-vertex $\Gamma_{t+2}$-colorable $3$-graph with
$|\mathcal{H}| \ge \left(\frac{t(t+1)}{6(t+2)^2}-\epsilon\right)n^3$.
Then $\left(\frac{t+1}{2(t+2)}-100t^4\epsilon^{1/2}\right)n^2 \le
|\partial\mathcal{H}| \le \left(\frac{t^2+3 t+3}{2 (t+2)^2}+5000t^4\epsilon^{1/2}\right)n^2$.
\end{lemma}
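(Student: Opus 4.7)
The plan is to place $\mathcal{H}$ inside a blowup of $\Gamma_{t+2}$, use Lagrangian stability to pin down the part sizes up to an $O(t\epsilon^{1/2})$ error, and then deduce both bounds by quick counting arguments.

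Since $\mathcal{H}$ is $\Gamma_{t+2}$-colorable, I would fix a partition $V(\mathcal{H}) = V_1 \cup \cdots \cup V_{t+4}$ with $\mathcal{H} \subseteq \Gamma_{t+2}[V_1, \ldots, V_{t+4}]$ and set $x_i = |V_i|/n$. Lemma~\ref{LEMMA:|H|<=lambda-T-n^r} together with the hypothesis forces $p_{\Gamma_{t+2}}(\vec{x}) \ge \lambda - \epsilon$ with $\lambda := \lambda(\Gamma_{t+2}) = \frac{t(t+1)}{6(t+2)^2}$, so by Lemma~\ref{LEMMA:Lagrangian-stability} there is some $\alpha \in [0,1]$ with $x_i = \bar{x}_i \pm 30t\epsilon^{1/2}$ for every $i$, where $\bar{x}_i = \frac{1}{t+2}$ for $i \in [t]$, $\bar{x}_{t+1} = \bar{x}_{t+4} = \frac{\alpha}{t+2}$, and $\bar{x}_{t+2} = \bar{x}_{t+3} = \frac{1-\alpha}{t+2}$.

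For the upper bound, $K_{t+2}^{3}$ is $2$-covered, so by Proposition~\ref{PROP:nonminimal-corss-blowup}(b) so is $\Gamma_{t+2}$; hence the shadow of the blowup consists of all cross-part pairs, giving
$$|\partial\mathcal{H}| \le \binom{n}{2} - \sum_{i=1}^{t+4}\binom{|V_i|}{2} \le \frac{n^2}{2}\Bigl(1 - \sum_i x_i^2\Bigr).$$
A direct computation yields $1 - \sum_i \bar{x}_i^2 = \frac{t^2 + 3t + 2 + 4\alpha(1-\alpha)}{(t+2)^2}$, which is maximised at $\alpha = 1/2$ with value $\frac{t^2+3t+3}{(t+2)^2}$. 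Combining with $\bigl|\sum_i x_i^2 - \sum_i \bar{x}_i^2\bigr| = O(t\epsilon^{1/2})$ yields the stated upper bound.

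For the lower bound I would use $3|\mathcal{H}| = \sum_{\{u,v\} \in \partial\mathcal{H}} d_{\mathcal{H}}(u,v) \le |\partial\mathcal{H}|\,\Delta_2(\mathcal{H})$. Since the codegree of any pair $\{u,v\} \in V_i \times V_j$ in the blowup equals $\sum_{k \in N_{\Gamma_{t+2}}(i,j)} |V_k|$, and since $d_{\Gamma_{t+2}}(i,j) \le t+2$ with equality exactly on pairs in $\binom{[t]}{2}$ (by Observation~\ref{OBS:structure-link}(c)), the stability estimate gives $\Delta_2(\mathcal{H}) \le \frac{tn}{t+2} + O(t\epsilon^{1/2})\,n$; the maximum is attained at pairs in $\binom{[t]}{2}$, where the sum is exactly $n - |V_i| - |V_j|$. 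Substituting $|\mathcal{H}| \ge (\lambda - \epsilon)n^3$ and simplifying gives $|\partial\mathcal{H}| \ge \frac{t+1}{2(t+2)}\,n^2 - O(t^4\epsilon^{1/2})\,n^2$, comfortably inside the claimed error $100t^4\epsilon^{1/2}$.

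The main bookkeeping obstacle will be confirming that the maximum codegree really is $\frac{tn}{t+2}$ and not some larger value coming from a pair $\{i,j\}$ with $i \in [t]$ and $j \in \{t+1,\ldots,t+4\}$ or from $\{t+1,t+4\}$, $\{t+2,t+3\}$: case-by-case from Observation~\ref{OBS:structure-link}(c) one has to verify that the sum $\sum_{k\in N_{\Gamma_{t+2}}(i,j)}|V_k|$ never exceeds $\frac{tn}{t+2} + O(t\epsilon^{1/2})\,n$. This uses the fact implied by the stability estimate that the four "special" parts split into two complementary pairs $\{V_{t+1},V_{t+2}\}$ and $\{V_{t+3},V_{t+4}\}$ each of total mass $\frac{n}{t+2} \pm O(t\epsilon^{1/2})$, matching the mass of a single $[t]$-part and thus keeping every relevant codegree sum capped at $\frac{tn}{t+2}$.
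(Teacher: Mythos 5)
Your proof is correct, and the lower-bound route genuinely differs from the paper's.

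For the upper bound, both arguments reduce to the same computation: $\partial\mathcal{H} \subseteq \partial\mathcal{G}$ where $\mathcal{G}$ is the blowup, and since $\Gamma_{t+2}$ is $2$-covered the shadow of the blowup consists of all cross-part pairs, so $|\partial\mathcal{G}| = \frac{1}{2}\bigl(n^2 - \sum_i |V_i|^2\bigr)$; one then plugs in the stability estimates. The paper phrases this as $p_{\partial\mathcal{G}}(\vec{x})\,n^2$ and expands the polynomial; you write $\frac{n^2}{2}(1 - \sum_i x_i^2)$, but these are the same thing.

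For the lower bound your argument is structurally different. The paper first controls $|\partial\mathcal{G}\setminus\partial\mathcal{H}|$ by a double count over \emph{triples}: every pair of $\partial\mathcal{G}$ has codegree at least $(\frac{1}{t+2} - 30t\epsilon^{1/2})n$ in $\mathcal{G}$, so each missing shadow edge forces $\Omega(n/t)$ missing triples, giving $|\partial\mathcal{G}\setminus\partial\mathcal{H}| \le 6(t+2)\epsilon\, n^2$; they then compute $|\partial\mathcal{G}|$ directly via $p_{\partial\mathcal{G}}$. You instead apply the codegree double count $3|\mathcal{H}| = \sum_{e\in\partial\mathcal{H}} d_{\mathcal{H}}(e) \le |\partial\mathcal{H}|\,\Delta_2(\mathcal{H})$ together with the \emph{maximum} codegree bound $\Delta_2(\mathcal{H}) \le \Delta_2(\mathcal{G}) \le \frac{tn}{t+2} + O(t^2\epsilon^{1/2})n$. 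Your case analysis is essentially correct: the maximum of $\sum_{k\in N_{\Gamma_{t+2}}(i,j)}|V_k|$ is about $\frac{tn}{t+2}$, and (contrary to the phrasing "equality exactly on pairs in $\binom{[t]}{2}$") it is attained to the same order on pairs $\{i,j\}\in\binom{[t]}{2}$, pairs $i\in[t]$, $j\in[t+1,t+4]$, and the pairs $\{t+1,t+4\},\{t+2,t+3\}$ — but since you only need an upper bound of $\frac{tn}{t+2} + O(t^2\epsilon^{1/2})n$ this does not matter, and you flag it yourself. Dividing $3(\lambda-\epsilon)n^3$ by this codegree bound gives $\frac{t+1}{2(t+2)}n^2 - O(t^2\epsilon^{1/2})n^2$, comfortably within the claimed $100t^4\epsilon^{1/2}$. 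What your route buys is avoiding the precise evaluation of $|\partial\mathcal{G}|$ on the lower-bound side (you never compute $p_{\partial\mathcal{G}}$); what you give up is tightness for intermediate $\alpha$ (the codegree count recovers exactly $\frac{t+1}{2(t+2)}n^2$ only at $\alpha\in\{0,1\}$, whereas the paper's count matches $|\partial\mathcal{G}|$ for all $\alpha$) — but the lemma only needs the $\alpha$-independent lower bound, so this is harmless.
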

\begin{proof}
Let $V = V_1\cup\cdots\cup V_{t+4}$ be a partition such that $\mathcal{H}\subset \Gamma_{t+2}[V_1,\ldots,V_{t+4}]$.
Let $\mathcal{G} = \Gamma_{t+2}[V_1,\ldots,V_{t+4}]$.
Let $x_i = |V_i|/n$ for $i\in[t+4]$.
By Lemma~\ref{LEMMA:Lagrangian-stability}, there exists some $\alpha\in[0,1]$ such that
\begin{align}\label{equ:weight-stable-2}
x_i =
\begin{cases}
\frac{1}{t+2} \pm 30t\epsilon^{1/2} & \text{if } i\in [t], \\
\frac{\alpha}{t+2} \pm 30t\epsilon^{1/2} & \text{if } i\in \{t+1, t+4\},\\
\frac{1-\alpha}{t+2} \pm 30t\epsilon^{1/2} & \text{if } i\in\{t+2, t+3\}.
\end{cases} \notag
\end{align}
Notice that for every pair $\{u,v\}\in \partial\mathcal{G}$
the number of edges in $\mathcal{G}$ containing $\{u,v\}$ is at least
$\left(\frac{1}{t+2} - 30t\epsilon^{1/2}\right)n \ge \frac{n}{2(t+2)}$.
Therefore, it follows from a simple double counting that
\begin{align}
|\partial\mathcal{G}\setminus \partial\mathcal{H}|
\le \frac{3|\mathcal{G}\setminus\mathcal{H}|}{n/\left(2(t+2)\right)}
\le \frac{3\epsilon n^3}{n/\left(2(t+2)\right)}
\le 6\epsilon (t+2)n^2. \notag
\end{align}
Therefore,
\begin{align}
|\partial\mathcal{H}|/n^2
& \ge |\partial\mathcal{G}|/n^2 -6\epsilon (t+2)
 = p_{\partial\mathcal{G}}(x_1,\ldots,x_{t+4})-6(t+2)\epsilon \notag\\
& \ge \binom{t}{2}\left(\frac{1}{t+2} - 30t\epsilon^{1/2}\right)^2
        + t\left(\frac{1}{t+2} - 30t\epsilon^{1/2}\right)\left(\frac{2}{t+2} - 4\times 30t\epsilon^{1/2}\right) \notag\\
& \quad + \left(\frac{\alpha}{t+2} - 30t\epsilon^{1/2}\right)^2
        +\left(\frac{1-\alpha}{t+2} - 30t\epsilon^{1/2}\right)^2 \notag\\
& \quad   + 4\left(\frac{\alpha}{t+2} - 30t\epsilon^{1/2}\right)\left(\frac{1-\alpha}{t+2} - 30t\epsilon^{1/2}\right)
        -6(t+2)\epsilon \notag\\
& \ge \frac{t+1}{2(t+2)}+\frac{2\alpha(1-\alpha)}{(t+2)^2} -30t(t+3)\epsilon^{1/2}+450 t^2 \left(t^2+7 t+12\right)\epsilon-6(t+2)\epsilon\notag\\
& \ge \frac{t+1}{2(t+2)} - 100t^4\epsilon^{1/2}. \notag
\end{align}
On the other hand, we have
\begin{align}
|\partial\mathcal{H}|/n^2
& \le |\partial\mathcal{G}|/n^2
 = p_{\partial\mathcal{G}}(x_1,\ldots,x_{t+4}) \notag\\
& \le \binom{t}{2}\left(\frac{1}{t+2} +30t\epsilon^{1/2}\right)^2
        + t\left(\frac{1}{t+2} + 30t\epsilon^{1/2}\right)\left(\frac{2}{t+2} + 4\times 30t\epsilon^{1/2}\right) \notag\\
& \quad + \left(\frac{\alpha}{t+2} + 30t\epsilon^{1/2}\right)^2
        +\left(\frac{1-\alpha}{t+2} + 30t\epsilon^{1/2}\right)^2 \notag\\
& \quad   + 4\left(\frac{\alpha}{t+2} + 30t\epsilon^{1/2}\right)\left(\frac{1-\alpha}{t+2} + 30t\epsilon^{1/2}\right) \notag\\
& \le \frac{t+1}{2(t+2)}+\frac{2\alpha(1-\alpha)}{(t+2)^2}+30 t (t+3) \epsilon^{1/2}+450 t^2 \left(t^2+7 t+12\right)\epsilon \notag\\
& \le \frac{t+1}{2(t+2)}+\frac{1}{2(t+2)^2} + 5000t^4\epsilon^{1/2}
 = \frac{t^2+3 t+3}{2 (t+2)^2}+ 5000t^4\epsilon^{1/2}. \notag
\end{align}
\end{proof}

Now we are ready to prove Theorem~\ref{THM:feasibe-region}.

\begin{proof}[Proof of Theorem~\ref{THM:feasibe-region}]
First, we prove that $\mathrm{proj}\Omega(\mathcal{F}_{t+2}) = \left[0,\frac{t+3}{t+4}\right]$.
By a result in~\cite{LM1}, this is equivalent to show that $\frac{t+3}{t+4} \in \mathrm{proj}\Omega(\mathcal{F}_{t+2})$ and $\mathrm{proj}\Omega(\mathcal{F}_{t+2}) \subset \left[0,\frac{t+3}{t+4}\right]$.
Let $\mathcal{H}$ be an $n$-vertex $\mathcal{F}_{t+2}$-free $3$-graph.
By Lemma~\ref{LEMMA:F-free-shadow-Km-free}, the graph $\partial\mathcal{H}$ is $K_{t+5}$-free.
Thus, by Tur\'{a}n's theorem, we have $\rho(\partial\mathcal{H}) \le \frac{t+3}{t+4}$.
Therefore, we have $\mathrm{proj}\Omega(\mathcal{F}_{t+2})\subset \left[0,\frac{t+3}{t+4}\right]$.
On the other hand, since the balanced blowup of $\Gamma_{t+2}$ on $n$ vertices has edge density $\frac{t+3}{t+4}$ as $n\to \infty$,
we have $\frac{t+3}{t+4} \in \mathrm{proj}\Omega(\mathcal{F}_{t+2})$.
Therefore, $\mathrm{proj}\Omega(\mathcal{F}_{t+2}) = \left[0,\frac{t+3}{t+4}\right]$.

Let $I_t = \left[\frac{t+1}{t+4},\frac{t^2+3 t+3}{(t+2)^2}\right]$.
Next, we show that $I_t \times \left\{\frac{t (t+1)}{(t+2)^2}\right\}\subset \Omega(\mathcal{F}_{t+2})$.
This is done by constructing for every $x\in I_t$ a sequence of $\mathcal{F}_{t+2}$-free $3$-graphs whose shadow densities approach $x$, and whose edge densities approach $\frac{t (t+1)}{(t+2)^2}$.
Let $\alpha \in [0,1/2]$ be a real number.
Let $x_1 = \cdots= x_t = \frac{1}{t+2}$, $x_{t+1} = x_{t+4} = \frac{\alpha}{t+2}$, and
$x_{t+2} = x_{t+3} = \frac{1-\alpha}{t+2}$.
For every $n\in \mathbb{N}$ let
$\mathcal{H}_n(\alpha) = \Gamma_{t+2}[V_1,\ldots,V_{t+4}]$ be a blowup of $\Gamma_{t+2}$
such that $|V_i| = \lfloor x_i n \rfloor$ for $i\in [t+4]$.
It is easy to see that
\begin{align}
\lim_{n\to\infty}\rho\left(\partial\mathcal{H}_n(\alpha)\right)
= 2 \cdot p_{K_{t+4}}(x_1,\ldots,x_{t+4})
= \frac{t^2+3 t+2+4\alpha(1-\alpha)}{(t+2)^2}, \notag
\end{align}
and
\begin{align}
\lim_{n\to\infty}\rho\left(\mathcal{H}_n(\alpha)\right)
= 6\cdot p_{\Gamma_{t+2}}(x_1,\ldots,x_{t+4})
= \frac{t (t+1)}{(t+2)^2}. \notag
\end{align}
Letting $\alpha$ vary from $0$ to $1/2$,
the function $\frac{t^2+3 t+2+4\alpha(1-\alpha)}{2 (t+2)^2}$ grows from $\frac{t+1}{t+4}$ to $\frac{t^2+3 t+3}{(t+2)^2}$. Therefore,
$I_t \times \left\{\frac{t (t+1)}{6 (t+2)^2}\right\}\subset \Omega(\mathcal{F}_{t+2})$.

Finally, we show that the feasible region function $g(\mathcal{F}_{t+2})$ attains its maximum only on the interval $I_t$.
Suppose that $\left(\mathcal{H}_n\right)_{n=1}^{\infty}$ is a sequence of $\mathcal{F}_{t+2}$-free $3$-graphs
with $\lim_{n\to \infty}\rho(\partial\mathcal{H}_n) = x$ and $\lim_{n\to \infty}\rho(\mathcal{H}_n) = \frac{t(t+1)}{(t+2)^2}$.
To keep our notations simple, let us assume that $v(\mathcal{H}_n) = n$ for all $n\ge 1$.
Let $\epsilon>0$ be a sufficiently small constant.
Then there exists $N_0$ such that $\rho(\partial\mathcal{H}_n) = x\pm \epsilon$ and
$\rho(\mathcal{H}_n) \ge \frac{t(t+1)}{(t+2)^2} -\epsilon$ for all $n\ge N_0$.
It follows from Corollary~\ref{CORO:stability} that there exists a set $Z_{n}\subset V(\mathcal{H}_n)$
of size $\epsilon^{1/2} n$
such that the $3$-graph $\mathcal{H}'_{n}$ obtained from $\mathcal{H}_n$ by removing all edges that have
nonempty intersection with $Z_n$ is $\Gamma_{t+2}$-colorable with minimum degree at least $\delta(\mathcal{H}) \ge \left( \frac{t(t+1)}{2(t+2)^2} - 3\epsilon^{1/2}\right)n^2$.
Fixing a $\Gamma_{t+2}$-coloring $V(\mathcal{H}_n') = V_{n,1} \cup \cdots \cup V_{n,t+4}$ of $\mathcal{H}_n'$
and let $\mathcal{G}_n = \Gamma_{t+2}[V_{n,1} \cup \cdots \cup V_{n,t+4}]$ be the blowup of $\Gamma_{t+2}$.

Since $\rho(\mathcal{H}_n') \ge \rho(\mathcal{H}_n)-|Z_{\epsilon}|n^2/\binom{n}{3}\ge  \rho(\mathcal{H}_n)-6\epsilon^{1/2} \ge \frac{t(t+1)}{(t+2)^2}-7\epsilon^{1/2}$,
it follows from Lemma~\ref{LEMMA:feasible-region-shadow} that
$$\frac{t+1}{t+2}-2800t^4\epsilon^{1/4} \le
\rho(\partial\mathcal{H}_n') \le \rho(\partial\mathcal{G}_n) \le \frac{t^2+3 t+3}{(t+2)^2}+140000t^4\epsilon^{1/4}.$$
By Claim~\ref{CLAIM:link-empty-inside},
for every $v\in Z_n$ we have $L_{\mathcal{H}_n}(v)\cap V_{n,i} = \emptyset$ \footnote{Note that in the proof of Claim~\ref{CLAIM:link-empty-inside}
we do not require $d_{\mathcal{H}_n}(v)$ to be large.} for all $i\in [t+4]$.
Therefore, $\rho(\partial\mathcal{H}_n) \le \rho(\partial\mathcal{G}_n) + |Z_n|n/\binom{n}{2} \le \rho(\partial\mathcal{G}_n) +3\epsilon^{1/2}$.
So,
\begin{align}
\frac{t+1}{t+2}-2800t^4\epsilon^{1/4}
\le \rho(\partial\mathcal{H}'_n)
\le \rho(\partial\mathcal{H}_n)
\le \rho(\partial\mathcal{G}_n) +3\epsilon^{1/2}
\le \frac{t^2+3 t+3}{(t+2)^2}+140001t^4\epsilon^{1/4}.\notag
\end{align}
Consequently,
\begin{align}
\frac{t+1}{t+2}-2800t^4\epsilon^{1/4}-\epsilon
\le x
\le \frac{t^2+3 t+3}{(t+2)^2}+140001t^4\epsilon^{1/4} +\epsilon.\notag
\end{align}
Letting $\epsilon \to 0$, we have
\begin{align}
\frac{t+1}{t+2}
\le x
\le \frac{t^2+3 t+3}{(t+2)^2}.\notag
\end{align}
This completes the proof of Theorem~\ref{THM:feasibe-region}.
\end{proof}
\section{Concluding remarks}\label{SEC:Remarks}
\noindent$\bullet$  We defined the crossed blowup and used it to construct a finite family of triple systems with infinitely many extremal constructions.
One could extended this operation in the following way.

Instead of replacing a pair of vertices (i.e. $\{v_1,v_2\}$) by four vertices (i.e. $\{v_1,v_1',v_2,v_2'\}$),
one could replace it by $2^k$ vertices for $k\ge 2$.

Let $Q_{k}$ denote the vertex set of the $k$-dimensional hypercube (each vertex is represented by a length-$k$ binary string).
For $i\in [k]$ let $Q_{k}(i,0)\subset Q_{k}$ and $Q_{k}(i,1)\subset Q_{k}$ denote the collection of vertices whose $i$-th coordinate is
$0$ and $1$, respectively.

\begin{definition}[$k$-crossed blowup]\label{DFNp:k-crosse-blowup}
Let $k\ge 2$ be an integer.
Let $\mathcal{G}$ be a $3$-graph  and $\{v_1,v_2\}\subset V(\mathcal{H})$
be a pair with $d_{\mathcal{G}}(v_1,v_2) = d \ge k$.
Assume that $N_{\mathcal{G}}(v_1,v_2)=\{u_1,  \ldots, u_d\}$.
The $k$-crossed blowup $\mathcal{G}\boxplus^{k}\{v_1,v_2\}$ of $\mathcal{G}$ on $\{v_1,v_2\}$ is obtained in the following way.
\begin{enumerate}[label=(\alph*)]
\item Remove all edges in $\mathcal{G}$ that contain $\{v_1,v_2\}$,
\item replace $\{v_1,v_2\}$ by $2^k$ new vertices $Q_{k}$ so that every vertex in $Q(1,0)$ is a clone of $v_1$ and every vertex in
        $Q(1,1)$ is a colon of $v_2$,
\item for $i\in [k-1]$ we add a set $\mathcal{E}_i$ of triples containing $u_i$ so that $L_{\mathcal{E}_i}(u_i)$ is the
        complete bipartite graph $B[Q(i,0),Q(i,1)]$ with two parts $Q(i,0),Q(i,1)$,
\item for $i \in [k,d]$ we add  a set $\mathcal{E}_i$ of triples containing $u_i$ so that $L_{\mathcal{E}_i}(u_i)$ is the
        complete bipartite graph $B[Q(k,0),Q(k,1)]$ with two parts $Q(k,0),Q(k,1)$.
\end{enumerate}
\end{definition}

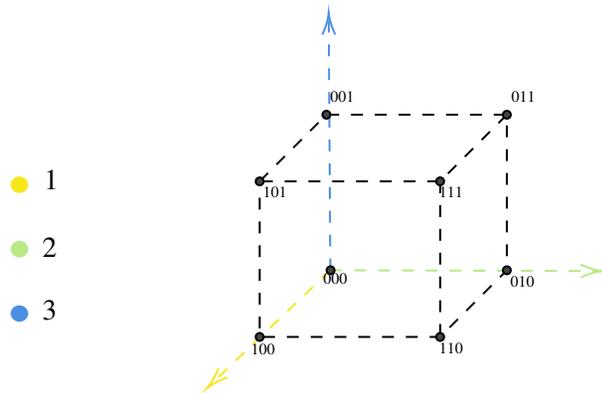
\begin{figure}[htbp]
\centering
\tikzset{every picture/.style={line width=0.75pt}} 
\begin{tikzpicture}[x=0.75pt,y=0.75pt,yscale=-1,xscale=1]
\draw  [fill={rgb, 255:red, 255; green, 255; blue, 255 }  ,fill opacity=1 ][dash pattern={on 4.5pt off 4.5pt}] (431.08,189.69) -- (464.73,156.05) -- (555.75,156.05) -- (555.75,234.56) -- (522.1,268.21) -- (431.08,268.21) -- cycle ; \draw  [dash pattern={on 4.5pt off 4.5pt}] (555.75,156.05) -- (522.1,189.69) -- (431.08,189.69) ; \draw  [dash pattern={on 4.5pt off 4.5pt}] (522.1,189.69) -- (522.1,268.21) ;
\draw [color={rgb, 255:red, 184; green, 233; blue, 134 }  ,draw opacity=1 ] [dash pattern={on 4.5pt off 4.5pt}]  (466.83,234.56) -- (601.92,234.96) ;
\draw [shift={(603.92,234.97)}, rotate = 180.17] [color={rgb, 255:red, 184; green, 233; blue, 134 }  ,draw opacity=1 ][line width=0.75]    (10.93,-3.29) .. controls (6.95,-1.4) and (3.31,-0.3) .. (0,0) .. controls (3.31,0.3) and (6.95,1.4) .. (10.93,3.29)   ;
\draw [color={rgb, 255:red, 248; green, 231; blue, 28 }  ,draw opacity=1 ] [dash pattern={on 4.5pt off 4.5pt}]  (466.72,234.56) -- (406.12,292.51) ;
\draw [shift={(404.68,293.89)}, rotate = 316.28] [color={rgb, 255:red, 248; green, 231; blue, 28 }  ,draw opacity=1 ][line width=0.75]    (10.93,-3.29) .. controls (6.95,-1.4) and (3.31,-0.3) .. (0,0) .. controls (3.31,0.3) and (6.95,1.4) .. (10.93,3.29)   ;
\draw [color={rgb, 255:red, 74; green, 144; blue, 226 }  ,draw opacity=1 ] [dash pattern={on 4.5pt off 4.5pt}]  (466.72,234.56) -- (465.9,106.67) ;
\draw [shift={(465.89,104.67)}, rotate = 89.63] [color={rgb, 255:red, 74; green, 144; blue, 226 }  ,draw opacity=1 ][line width=0.75]    (10.93,-3.29) .. controls (6.95,-1.4) and (3.31,-0.3) .. (0,0) .. controls (3.31,0.3) and (6.95,1.4) .. (10.93,3.29)   ;
\draw  [color={rgb, 255:red, 248; green, 231; blue, 28 }  ,draw opacity=1 ][fill={rgb, 255:red, 248; green, 231; blue, 28 }  ,fill opacity=1 ] (305.99,191.43) .. controls (305.99,189.41) and (307.75,187.77) .. (309.91,187.77) .. controls (312.07,187.77) and (313.82,189.41) .. (313.82,191.43) .. controls (313.82,193.45) and (312.07,195.08) .. (309.91,195.08) .. controls (307.75,195.08) and (305.99,193.45) .. (305.99,191.43) -- cycle ;
\draw  [color={rgb, 255:red, 184; green, 233; blue, 134 }  ,draw opacity=1 ][fill={rgb, 255:red, 184; green, 233; blue, 134 }  ,fill opacity=1 ] (306,223.91) .. controls (306,221.89) and (307.75,220.25) .. (309.91,220.25) .. controls (312.07,220.25) and (313.82,221.89) .. (313.82,223.91) .. controls (313.82,225.93) and (312.07,227.57) .. (309.91,227.57) .. controls (307.75,227.57) and (306,225.93) .. (306,223.91) -- cycle ;
\draw  [color={rgb, 255:red, 74; green, 144; blue, 226 }  ,draw opacity=1 ][fill={rgb, 255:red, 74; green, 144; blue, 226 }  ,fill opacity=1 ] (306,256.39) .. controls (306,254.37) and (307.75,252.74) .. (309.91,252.74) .. controls (312.07,252.74) and (313.82,254.37) .. (313.82,256.39) .. controls (313.82,258.41) and (312.07,260.05) .. (309.91,260.05) .. controls (307.75,260.05) and (306,258.41) .. (306,256.39) -- cycle ;
%
\draw (320.92,182.12) node [anchor=north west][inner sep=0.75pt]   [align=left] {1};
\draw (320.05,216.23) node [anchor=north west][inner sep=0.75pt]   [align=left] {2};
\draw (320.05,247.9) node [anchor=north west][inner sep=0.75pt]   [align=left] {3};
\draw [fill=uuuuuu] (431.08,189.69) circle (1.5pt);
\draw [fill=uuuuuu] (464.73,156.05) circle (1.5pt);
\draw [fill=uuuuuu] (555.75,156.05) circle (1.5pt);
\draw [fill=uuuuuu] (555.75,234.56) circle (1.5pt);
\draw [fill=uuuuuu] (522.1,268.21) circle (1.5pt);
\draw [fill=uuuuuu] (431.08,268.21) circle (1.5pt);
\draw [fill=uuuuuu] (466.83,234.56)  circle (1.5pt);
\draw [fill=uuuuuu] (522.1,189.69) circle (1.5pt);
\draw (461.8,234.47) node [anchor=north west][inner sep=0.75pt]  [font=\tiny] [align=left] {000};
\draw (556.04,235.77) node [anchor=north west][inner sep=0.75pt]  [font=\tiny] [align=left] {010};
\draw (425.32,270.2) node [anchor=north west][inner sep=0.75pt]  [font=\tiny] [align=left] {100};
\draw (520.11,269.42) node [anchor=north west][inner sep=0.75pt]  [font=\tiny] [align=left] {110};
\draw (465.28,142.7) node [anchor=north west][inner sep=0.75pt]  [font=\tiny] [align=left] {001};
\draw (556.57,142.7) node [anchor=north west][inner sep=0.75pt]  [font=\tiny] [align=left] {011};
\draw (431.37,190.91) node [anchor=north west][inner sep=0.75pt]  [font=\tiny] [align=left] {101};
\draw (520.18,190.91) node [anchor=north west][inner sep=0.75pt]  [font=\tiny] [align=left] {111};
\end{tikzpicture}
\caption{$\{145,245,345\}\boxplus^3\{4,5\}$, in which $L(1)$ is the complete bipartite graph with
         parts $\{000,001,010,011\}$ and $\{100,101,110,111\}$ (these two squares are perpendicular to the yellow axis).
         Similarly to $L(2)$ and $L(3)$.}
\end{figure}

Notice that the crossed blowup we defined in Section~\ref{SEC:Introduction} is a $2$-crossed blowup.
Using a similar argument, one could extend
Proposition~\ref{PROP:nonminimal-corss-blowup} to $k$-crossed blowups for all $k\ge 2$.

There is also a natural extension to $r$-graphs for every $r\ge 4$. We omit the definition here.

\medskip

\noindent$\bullet$ There are two ways to extend Theorem~\ref{THM:main-sec1.1} to $r$-graphs for every $r\ge 4$.
One is to use $\Gamma_{t+2}$ to construct an $r$-graph in the following way:
take $r-3$ new vertices $\{u_1,\ldots, u_{r-3}\}$ and let
\begin{align}
\Gamma_{t+2}^r = \left\{\{u_1,\ldots, u_{r-3}\}\cup E\colon E\in \Gamma_{t+2}\right\}. \notag
\end{align}
Following the argument as in~\cite{LMR3} one could extend Theorem~\ref{THM:main-sec1.1} to $r$-graphs.

Another way is to consider the crossed blowup of the $r$-uniform complete graph $K_{t+2}^{r}$.
Following the argument as in the present paper one could extend Theorem~\ref{THM:main-sec1.1} to $r$-graphs as well.

\medskip

\noindent$\bullet$ For $t=1$ one could use the fact that the set $Z(\Gamma_1)$ is a two-dimensional simplex
\begin{align}
\left\{\alpha \vec{x}_1 +\beta \vec{x}_2 +(1-\alpha-\beta) \vec{x}_3
        \colon \alpha\ge 0, \beta\ge 0, \alpha + \beta\le 1\right\}, \notag
\end{align}
where $\vec{x}_1 = \left(\frac{1}{3},0,\frac{1}{3},0,0,\frac{1}{3}\right)$,
$\vec{x}_2 = \left(0,\frac{1}{3},\frac{1}{3},0,0,\frac{1}{3}\right)$, and
$\vec{x}_3 = \left(0,\frac{1}{3},0,\frac{1}{3},\frac{1}{3},0\right)$,
to improve Theorem~\ref{THM:main-sec1.1}~(b) to show that
the number of maximum $\mathcal{F}_1$-free $3$-graphs on $n$ vertices is $\Theta(n^2)$.

\medskip

\noindent$\bullet$ After the submission of this paper, Pikhurko and the third author~\cite{LP22} showed a different method to construct finite families with an infinite stability number. Moreover, they showed that  there exists a finite family $\mathcal{F}$ of $3$-graphs whose feasible region function attains its maximum on a Cantor-type set.
Balogh, Clemen, and Luo~\cite{BCL22} constructed a $1$-stable $3$-graph with exponentially (in terms of the number of vertices) many nonisomorphic extremal constructions and showed that the feasible region function attains its maximum on a nontrivial interval.

\section*{Acknowledgments} 
We would like to thank a referee for the careful reading of the manuscript and for providing many very helpful comments.

\bibliographystyle{amsplain}


\begin{dajauthors}
\begin{authorinfo}[JH]
  Jianfeng Hou\\
  Center for Discrete Mathematics\\
  Fuzhou University\\
  Fujian, 350003, China\\
  jfhou\imageat{}fzu\imagedot{}edu\imagedot{}cn \\
\end{authorinfo}
\begin{authorinfo}[HL]
    Heng Li\\
    School of Mathematics\\
    Shandong University\\
    Shandong, 250100, China\\
    heng\imagedot{}li\imageat{}sdu\imagedot{}edu\imagedot{}cn
\end{authorinfo}
\begin{authorinfo}[XL]
    Xizhi Liu\\
    Mathematics Institute and DIMAP\\
    University of Warwick\\
    Coventry, CV4 7AL, UK\\
    xizhi\imagedot{}liu\imagedot{}ac\imageat{}gmail\imagedot{}com
\end{authorinfo}
\begin{authorinfo}[DM]
    Dhruv Mubayi\\
    Department of Mathematics,
    Statistics, and Computer Science\\
    University of Illinois\\
    Chicago, IL, 60607 USA\\
    mubayi\imageat{}uic\imagedot{}edu\\
    \url{http://homepages.math.uic.edu/~mubayi/}
\end{authorinfo}
\begin{authorinfo}[YZ]
    Yixiao Zhang\\
    Center for Discrete Mathematics\\
    Fuzhou University\\
    Fujian, 350003, China\\
    fzuzyx\imageat{}gmail\imagedot{}com\\
\end{authorinfo}
\end{dajauthors}

\end{document}